\newtheorem{satz}{Theorem}
\newtheorem{proposition}[satz]{Proposition}
\newtheorem{theorem}[satz]{Theorem}
\newtheorem{lemma}[satz]{Lemma}
\newtheorem{definition}[satz]{Definition}
\newtheorem{corollary}[satz]{Corollary}
\newtheorem{remark}[satz]{Remark}
\newtheorem{example}[satz]{Example}
\def\N{\mathbb {N}}
\def\Z{\mathbb {Z}}
\def\F{\mathbb {F}}
\def\E{\mathsf{E}}
\def\a{\alpha}
\def\C{\mathbb{C}}
\def\d{\delta}
\def\o{\omega}
\def\({\big (}
\def\){\big )}
\def\codim{{\rm codim}}
\def\le{\leqslant}
\def\ge{\geqslant}
\def\_phi{\varphi}
\def\eps{\varepsilon}
\def\Gr{{\mathbf G}}
\def\FF{\widehat}
\def\ov{\overline}
\def\Spec{{\rm Spec\,}}
\def\Cf{{\mathcal C}}
\def\mP{{\mathcal P}}
\def\mD{{\mathcal D}}
\def\C{\mathbb{C}}
\def\R{\mathbb{R}}
\def\L{\mathcal{L}}
\newcommand{\bp}{\bigskip}
\author{I.D. Shkredov}
\title{Some new results on  the higher energies I}
\date{}
\begin{document}
	\maketitle

%\begin{document}

\begin{center}
	Annotation.
\end{center}

{\it \small
    We obtain a generalization of the recent Kelley--Meka result on sets avoiding arithmetic progressions of length three. 
%*%    We generalize the recent Kelley--Meka result on sets avoiding arithmetic progressions of length three for two--dimensional corners.  
    In our proof we develop the theory of the higher energies. 
    Also, we discuss the case of longer arithmetic progressions, as well as a general family of norms, which includes the higher energies norms and Gowers norms. 
}
\\

\section{Introduction}

The famous  Erd\H{o}s--Tur\'an conjecture \cite{ET} asks is it true that for any integer $k\ge 3$  any set of positive integers $A=\{n_1 < n_2 <\dots < n_m < \dots \}$
satisfying 
\begin{equation}\label{conj:ET}
    \sum_{j=1}^\infty \frac{1}{n_j} = \infty 
\end{equation} 
contains an arithmetic progression of length $k$ (we say $A$ has APk), that is the sequence of the form $x,x+y,\dots,x+(k-1)y \in A$? 
This question has a rich history  see, e.g., \cite{Gowers_4}, \cite{Gowers_m} or \cite{sh_Sz_survey}  and is considered a central one in the area of classical additive combinatorics due to its connection with many adjecent fields as combinatorical ergodic theory and graphs/hypergraphs theory, we just mention some papers 
\cite{Szemeredi_4}, \cite{Szemeredi_m}, \cite{furstenberg2014recurrence}, \cite{furstenberg1982ergodic}, \cite{shelah1988primitive}, \cite{gowers2007hypergraph}, \cite{van1927beweis}, \cite{Tao_removal}, \cite{RS_regularity}, \cite{NRS_counting} etc. 
If one defines 
\[
    r_k (N) = \frac{1}{N} \max\{ |A| ~:~ A\subseteq \{1,\dots, N\}\,, \quad A \mbox{ has no APk} \} \,,
\]
then the condition \eqref{conj:ET} means, roughly, that 
\begin{equation}\label{conj:ET_rk}
    r_k (N) \ll \frac{1}{\log N \cdot (\log \log N)^{1+\eps}} \,, \quad \quad N \to \infty 
\end{equation} 
for an arbitrary $\eps>0$.

The case of arithmetic progressions of length   three was considered to be special thanks to the Fourier approach of Roth \cite{Roth1953}, the required information and references can be found in \cite{bloom2023kelley}, \cite{kelley2023strong}, as well as in \cite{sh_Sz_survey}. 
Bloom and Sisask in \cite{bloom2020breaking} proved that $r_3 (N) \ll (\log N)^{-1-c_1}$ for a certain $c_1>0$ and hence established  conjecture \eqref{conj:ET_rk} in the case of $k=3$. 
Recently, Kelley and Meka \cite{kelley2023strong} achieved a remarkable progress in this question and proved that 
\[
    r_k (N) \ll \exp (- O((\log N)^{c_1})) \,,
\]
where $c_1>0$ is an absolute constant. 
One of the ideas of paper  \cite{kelley2023strong} was to use the higher energy $\E^k_2$ and the uniformity relatively to $\E^k_2$ (all definitions can be found in Sections \ref{sec:def}, \ref{sec:E^k_l}) with a growing parameter $k$ to control  the number of arithmetic progressions in an arbitrary set. 
Namely, bound \eqref{conj:ET_rk} is an immediate consequence of   the following result (for simplicity we consider the group $\F_p^n$).

\begin{theorem}
    Let $\Gr=\F_p^n$, $A\subseteq \Gr$ be a set, $|A|=\d N$, and $\eps>0$ be a parameter.
    Then there is a subspace $V\subseteq \Gr$ and $x\in \Gr$ such that $A\cap (V+x)$ is $\eps$--uniform relatively to $\E^k_2$, $\mu_{V+x} (A) \ge \d$, and 
\begin{equation}\label{f:uniform_KM_intr}
    \codim V \ll \eps^{-14} k^4 \L^3 (\d) \L^{2} (\eps \d) \,.
    %7->14
\end{equation}
\label{t:uniform_KM_intr}
\end{theorem}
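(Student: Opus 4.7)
The natural approach is a density-increment iteration tuned to the $\mathsf{E}^k_2$ norm. We build inductively a chain $\Gr = V_0 \supseteq V_1 \supseteq V_2 \supseteq \dots$ and translates $x_0, x_1, \dots$ so that the relative densities $\d_i := \mu_{V_i+x_i}(A)$ are non-decreasing with $\d_0 \ge \d$. At stage $i$, if $A \cap (V_i+x_i)$ is already $\eps$--uniform with respect to $\mathsf{E}^k_2$ we halt and output the pair $(V_i,x_i)$; otherwise we invoke a dichotomy lemma that converts the failure of $\eps$--uniformity into a subspace $V_{i+1} \le V_i$ on which $A$ has strictly larger density, while controlling $\codim(V_{i+1}) - \codim(V_i)$.

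The main technical ingredient is a density-increment lemma to be established in Section \ref{sec:E^k_l} of the following shape: if $B \sbeq V$ satisfies $\mu_V(B) = \d'$ and is not $\eps$--uniform relatively to $\mathsf{E}^k_2$, then there exist a subspace $V' \le V$ of codimension at most $\eps^{-14} k^4 \L^{O(1)}(\eps \d')$ and an element $y \in V$ such that $\mu_{V'+y}(B) \ge (1 + c)\d'$ for a suitable absolute constant $c>0$. The proof of such a lemma follows the Kelley--Meka template: the hypothesis produces a large moment of $1_B \ast 1_{-B}$, which via an iterated H\"older / spectral Chang-type step and a Bogolyubov-type refinement deposits mass on a Bohr-type set; in the $\F_p^n$ setting this Bohr set collapses to an actual subspace of the claimed codimension. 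The exponent $14$ on $\eps$ reflects the cost of pushing an $L^k$--moment gain through a pair of convolution/Chang steps, and the $k^4$ factor encodes the dependence of these steps on the order $k$ of the energy.

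Once the density-increment lemma is in place, the iteration can run for at most $T = O(\L(\d))$ steps, because $\d_i$ is boosted by a constant factor each time and is bounded above by $1$. Summing the per-step codimension increments, and using $\d_i \ge \d$ throughout to estimate each $\L(\eps \d_i) \le \L(\eps \d)$, produces a total codimension of the order $\L(\d) \cdot \eps^{-14} k^4 \L^{O(1)}(\eps\d)$; a slightly more careful bookkeeping that distinguishes the iteration count from the $\d$--losses inside the increment step recovers the precise shape $\eps^{-14} k^4 \L^3(\d)\L^2(\eps\d)$ announced in \eqref{f:uniform_KM_intr}.

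The main obstacle lies entirely in the density-increment lemma: tracking the polynomial exponents through the convolution-moment argument is delicate, because any slack in the H\"older or spectral step inflates the $\eps^{-14}$ factor, and one must moreover arrange that the parameter $k$ enters polynomially (not exponentially) at every stage, in particular when converting an $\mathsf{E}^k_2$--defect into a large inner product with a suitable pseudorandom object. The iteration itself, by contrast, is routine once the per-step loss is pinned down.
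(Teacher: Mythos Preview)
Your plan is essentially the paper's approach: Theorem~\ref{t:uniform_KM_intr} is the Kelley--Meka result, which the paper cites directly and also re-derives (with an extra $\L^4(\eps)$ factor, see Theorem~\ref{t:uniform_KM}) by iterating the density-increment Proposition~\ref{p:increment_Ekl} at $l=2$ together with Lemma~\ref{l:e_to_d}. One minor correction to your bookkeeping: in the paper's version the per-step gain is $(1+\Omega(\eps))\d'$ rather than $(1+c)\d'$ for an absolute $c$, so the iteration runs $O(\eps^{-1}\L(\d))$ times rather than $O(\L(\d))$, and this extra $\eps^{-1}$ is absorbed into the final $\eps^{-14}$.
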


%We generalize Theorem \ref{t:uniform_KM} for the higher energies $\E^k_l$.  

The aim of  this paper is to generalize Kelley--Meka  results to a more wide additive--combinatorial family of energies $\E^k_l$ see, e.g., \cite{sh_Ek}. In our regime the parameter $l$ below is $l=O(1)$.

\begin{theorem}
    Let $\Gr=\F_p^n$, $A\subseteq \Gr$ be a set, $|A|=\d N$, and $\eps \in (0,1]$ be a parameter.
    Then there is a subspace $V\subseteq \Gr$ and $x\in \Gr$ such that $A\cap (V+x)$ is $\eps$--uniform relatively to $\E^k_l$, $\mu_{V+x}(A) \ge \d$ and 
\begin{equation}\label{f:uniform_KM_k_intr}
    \codim V \ll 
%    \eps^{-3l^2} k^4  l^{9l} \L^{4l} (\eps) \L^{5l} (\d) 
\eps^{-28l^l} (8l)^{28l^l} k^4 \L^{4l} (\eps) \L^{5l} (\d)
    \,.
\end{equation}
\label{t:uniform_KM_k_intr}
\end{theorem}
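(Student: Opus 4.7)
The plan is to mirror the Kelley--Meka density-increment proof of Theorem~\ref{t:uniform_KM_intr}, replacing the role of $\E^k_2$ by the refined energy $\E^k_l$ throughout and absorbing the passage from $l=2$ to bounded $l>2$ via an $l$-layer H\"older pyramid.

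The first step I would prove is a one-scale dichotomy: either $A$ is already $\eps$-uniform on all of $\Gr$ relatively to $\E^k_l$ (take $V=\Gr$ and stop), or else non-uniformity at level $\eps$ produces an $l$-fold normalized convolution of $1_A-\d$ carrying the $k$-shift structure of $\E^k_l$ that correlates, at level roughly $\eps^{O(1)}\d^{O(1)}$, with a low-complexity function on $\Gr$. The mechanism is that non-uniformity relative to $\E^k_l$ is an $L^l$ statement about a $k$-convolution, and peeling off the $l$-th power by H\"older (applied $l$ times) reduces it to a clean $L^2$ correlation to which the Fourier machinery of Section~\ref{sec:E^k_l} can be applied.

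The second step is to feed that correlation into a Chang/Croot--Sisask style spectrum bound applied to the relevant $l$-fold convolution: this would produce a subspace $V\sbeq \Gr$ with
\[
    \codim V \ll \eps^{-O(l)}(8l)^{O(l)} k^2 \L^{O(l)}(\eps)\L^{O(l)}(\d)
\]
and a coset $V+x$ on which $\mu_{V+x}(A)\ge \d(1+c\eps)$ for an absolute $c>0$. The exponents $4l$ and $5l$ in~\eqref{f:uniform_KM_k_intr} essentially track Chang's lemma applied at the $l$-th tier of the H\"older pyramid together with one Croot--Sisask smoothing. Iterating is then routine: since density is capped by~$1$ and each step multiplies $\d$ by $1+c\eps$, the procedure terminates in $O(\eps^{-1}\L(\d))$ rounds, and multiplying the per-step codimension by this number of rounds delivers~\eqref{f:uniform_KM_k_intr}, with the factor $(8l)^{28l^l}$ absorbing the worst-case blow-up from chaining H\"older across $l$ convolution layers at every iteration.

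The main obstacle is the one-step dichotomy for $l>2$. When $l=2$, non-uniformity with respect to $\E^k_2$ is essentially a single $L^2$ identity and a lone Cauchy--Schwarz extracts a usable Bohr/subspace correlation. For general bounded~$l$ one must unfold $l$ nested H\"older inequalities without losing quantitative control of either the correlation level or the density parameter, and it is exactly this unfolding---while keeping the dependence on $\d$ and $\eps$ polynomial at each tier---that drives the $(8l)^{28l^l}$ factor and demands careful bookkeeping of the $\E^k_l$ identities developed in Section~\ref{sec:E^k_l}.
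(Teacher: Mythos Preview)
Your outline misses the structural reason the bound carries the doubly-exponential factor $\eps^{-28l^l}(8l)^{28l^l}$: the density increment at level $l$ is \emph{not} a self-contained one-scale dichotomy. Both the transfer Lemma~\ref{l:e_to_d} and Proposition~\ref{p:increment_Ekl} require as a \emph{hypothesis} that $A$ already be $\frac{\eps^l}{8l}$-uniform relatively to $\E^{k_*}_{l-1}$ for a suitable $k_*$. Concretely, after the BSzG step (Lemma~\ref{l:BSzG_k}) and almost periodicity (Lemma~\ref{l:Sanders_V}) the argument reaches an inequality containing the factor $\sum_\a (A\circ B)^{l-1}(\a)$, and bounding this by $(1+O(\eps))^{l-1}\d^{l-1}|B|^{l-1}N$ via Corollary~\ref{c:A_circ_B} is exactly what needs $\E^{k_*}_{l-1}$-uniformity. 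Hence the proof is an induction on $l$: one first passes (by the inductive hypothesis) to a coset on which $A$ is uniform at level $l-1$ with the finer parameter $\eps_l/5\approx \eps^l/(40l)$, and only then runs the level-$l$ increment there. Each descent $l\to l-1$ replaces $\eps$ by roughly $(\eps/8l)^l$ and inflates $k$ by a factor of order $l^2\eps^{-l}\L(\eps)\L(\d)$; unwinding this recursion down to $l=2$ and then invoking Theorem~\ref{t:uniform_KM} is precisely what produces the $l^l$ in the exponent (see~\eqref{tmp:09.03_1}).

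Your proposed ``peel off the $l$-th power by H\"older down to $L^2$'' step goes in the wrong direction. The H\"older monotonicity~\eqref{f:Holder_Ekl} says that $\E^k_l$-uniformity implies $\E^{k'}_{l'}$-uniformity for $l'\le l$, so non-uniformity at level $l$ does not descend to a non-trivial $\E^k_2$ or Fourier statement; indeed the example in Section~\ref{subsec:uniformity} exhibits a set that is $\E^k_2$-uniform but not $\E^{k+1}_3$-uniform. There is therefore no H\"older chain converting the level-$l$ hypothesis into the ``clean $L^2$ correlation'' you posit. This is also why your bookkeeping is inconsistent with the theorem: a per-step codimension $\eps^{-O(l)}(8l)^{O(l)}k^2\L^{O(l)}(\eps)\L^{O(l)}(\d)$, iterated $O(\eps^{-1}\L(\d))$ times, would yield total codimension polynomial in $\eps^{-1}$ of degree $O(l)$, far smaller than the stated $\eps^{-28l^l}$. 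The true cost is recursive in $l$, not polynomial.
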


%*%
%The exact formulation of our results can be found in Section \ref{sec:E^k_l}, see Theorem \ref{t:uniform_KM_k} and Proposition \ref{p:L2_increment_Ekl}. 
Theorem \ref{t:uniform_KM_k_intr} is interesting in its own right and can be used to 
%finding 
solve more general equations and systems than $x+y=2z$, which corresponds to the case of AP3. 
Our forthcoming paper will be devoted to finding some applications of this result.

Usually the case of the groups $\F_p^n$ for prime $p$ is considered to be a model one, see the excellent survey \cite{Green_models} about this theme and an appropriate generalization for all abelian groups $\Gr$ is known to be a technical task. We will prove the correspondent result in our forthcoming paper.

The approach develops the strategy of \cite{Gowers_4}, \cite{Gowers_m}, the method of the higher energies (see, e.g., \cite{SS_higher}, \cite{sh_Ek}) and of course \cite{kelley2023strong}. 
Also, we extensively use a brilliant exposition \cite{bloom2023kelley}, where the Kelley--Meka results were discussed.
%*%
\begin{comment} 
The main message of our proof is the following: 
%that 
although the case  of arithmetic progressions of length three is a rather special one and corresponds to the energies $\E^k_2$  but nevertheless the argument  of \cite{kelley2023strong} can be adopted for higher energies $\E^k_l$ and in our particular  problem about corners it is required to have deal with the energy $\E^{k}_4$. 
%corners as well.  
%longer progressions.      
\end{comment}

In the appendix we discuss the original Erd\H{o}s--Tur\'an conjecture, that is the case of longer arithmetic progressions and show that there is series of difficulties on the  conceptual and on the technical levels which make the question about generalizations of  the methods from \cite{kelley2023strong} rather hard. 
The author thinks this part is also interesting in its own right because it allows us to understand the limits of the  Kelley--Meka approach.  
Also, we consider a general family of norms, which includes, simultaneously,  the norms $\E^k_l$ above, as well as the classical Gowers norms \cite{Gowers_m}.

\section{Definitions and preliminaries}  
\label{sec:def}

Let  $\Gr$ be a finite abelian group 
and
denote by $N$ the cardinality of $\Gr$.
%$N:=|\Gr|$ and
We use the same capital letter to denote  set $A\subseteq \Gr$ and   its characteristic function $A: \Gr \to \{0,1 \}$. 
Let us define $\mu_A (x) = A(x)/|A|$, that is $\sum_{x\in \Gr} \mu_A (x) =1$. 
Finally, let $f_A(x) = A(x) - |A|/N$ be the {\it balanced function} of $A$.
Given two sets $A,B\subset \Gr$, define  
%the \textit{product set} (
the {\it sumset} 
%in the abelian case) 
of $A$ and $B$ as 
$$A+B:=\{a+b ~:~ a\in{A},\,b\in{B}\}\,.$$
In a similar way we define the {\it difference sets} and {\it higher sumsets}, e.g., $2A-A$ is $A+A-A$.

Let $f$ be a function from $\Gr$ to $\mathbb{C}.$  We denote the Fourier transform of $f$ by~$\FF{f},$
\begin{equation}\label{F:Fourier}
  \FF{f}(\xi) =  \sum_{x \in \Gr} f(x) \overline{\chi (x)} \,,
\end{equation}
where $\chi \in \FF{\Gr}$ is a character of $\Gr$. 
We rely on the following basic identities
\begin{equation}\label{F_Par}
    \sum_{x\in \Gr} |f(x)|^2
        =
            \frac{1}{N} \sum_{\chi \in \FF{\Gr}} \big|\widehat{f} (\chi)\big|^2 \,,
\end{equation}
%\begin{equation}\label{svertka}
%    \sum_{y\in \Gr} \Big|\sum_{x\in \Gr} f(x) g(y-x) \Big|^2
%        = \frac{1}{N} \sum_{\xi \in \FF{\Gr}} \big|\widehat{f} (\xi)\big|^2 \big|\widehat{g} (\xi)\big|^2 \,.
%\end{equation}
and
\begin{equation}\label{f:inverse}
    f(x) = \frac{1}{N} \sum_{\chi \in \FF{\Gr}} \FF{f}(\chi)  \chi(x) \,.
\end{equation}
If
$$
    (f*g) (x) := \sum_{y\in \Gr} f(y) g(x-y) \quad \mbox{ and } \quad (f\circ g) (x) := \sum_{y\in \Gr} f(y) g(y+x) \,,
$$
 then
\begin{equation}\label{f:F_svertka}
    \FF{f*g} = \FF{f} \FF{g} 
%\quad \mbox{ and } \quad \FF{f \circ g} = \FF{f}^c \FF{g} = \ov{\FF{\ov{f}}} \FF{g} \,,
    %(\F{fg}) (x) = \frac{1}{N} (\F{f} * \F{g}) (x) \,.
\end{equation}
and similar for $f\circ g$. 
%where for a function $f:\Gr \to \mathbb{C}$ we put $f^c (x):= f(-x)$.
 Clearly,  $(f*g) (x) = (g*f) (x)$ and $(f\circ g)(x) = (g \circ f) (-x)$, $x\in \Gr$.
 The $k$--fold convolution, $k\in \N$  we denote by 
 $f^{(k)}$,
 so $f^{(2)} = f*f$ and $f^{(3)} = f*f*f$ for example.

We need some formalism concerning higher convolutions see, e.g.,  
\cite{sh_Ek}. 
Let  $l$ be a positive integer. 
%Let us  define 
Consider 
two operators $\mD_l$, $\mP_l : \Gr \to \Gr^l$ such that for a variable $x$ one has $\mD_l (x) = (x,\dots,x) \in \Gr^l$ and $\mP_l (x) = (x_1,\dots,x_l) \in \Gr^l$. 
Notice that $\mP_1 (x) = \mD_1 (x) = x$. 
In the same way these operators act on functions, e.g., 
%$f:\Gr \to \C$.
%For example, 
$\mP_l (f) (x_1,\dots,x_l) = f(x_1) \dots f(x_l)$ for $f:\Gr \to \C$ and for $F:\Gr^l \to \C$ one has $\mD_l (F)(x_1,\dots,x_l) =F(x_1,\dots,x_1)$ if $x_1=\dots=x_l$ and zero otherwise. 
%namely, $\mP_l (f) (x_1,\dots,x_l) = f(x_1) \dots f(x_l)$ and $\mD_l (f) (x_1,\dots,x_l) = f^l(x_1)$ if $x_1=\dots=x_l$ and zero otherwise. 
Now given  a function $f:\Gr \to \C$ and a positive integer $l$ define the generalized convolution 
\begin{equation}\label{def:Cf}
    \Cf_l (f) (x_1,\dots,x_l) = \sum_{z\in \Gr} f(z+x_1) \dots f(z+x_l) = (\mD_l (\Gr) \circ \mP_l (f)) (x_1,\dots,x_l) 
\end{equation} 
\begin{equation}\label{def:Cf+}
    := \sum_{z\in \Gr} f_{x_1,\dots,x_l} (z) \,.
\end{equation} 
In a similar way we can consider $\Cf_l (f_1,\dots,f_l) (x_1,\dots,x_l)$ for any functions $f_1,\dots,f_l : \Gr \to \C$. 
One has 
\begin{equation}\label{f:E^k_l_symmetries} 
    \Cf_l (f) (x_1,\dots,x_l) = \Cf_l (f) (x_1+w,\dots,x_l+w) = \Cf_l (f) ((x_1,\dots,x_l) + \mD_l (w))
\end{equation} 
for any $w\in \Gr$. 
Let us 
%underline 
emphasise 
that definitions \eqref{def:Cf}, \eqref{def:Cf+} differ slightly from the usual one, see, e.g., \cite{sh_Ek} by a linear change of the variables. Namely, it is a little bit more traditional to put 
\begin{equation}\label{f:E^k_l:C,C'-}
    f'_{x_1,\dots,x_{l}} (z) = f_{0,x_1,\dots,x_l} (z)= f(z) f(z+x_1) \dots f(z+x_{l}) \,,
\end{equation} 
and
\begin{equation}\label{f:E^k_l:C,C'}
    \Cf'_{l+1} (f) (x_1,\dots,x_l) = \sum_{z\in \Gr} f(z) f(z+x_1) \dots f(z+x_l) =  \Cf_l (f) (0, x_1,\dots,x_l)\,.
\end{equation} 
Definitions \eqref{f:E^k_l:C,C'-}, \eqref{f:E^k_l:C,C'} have  an advantage that they allow to consider infinite groups $\Gr$ as well. 
To this end we use the dual notation  $\| f\|^{kl}_{\E^k_l} = \bar{\E}^{k}_l (f) = N^{-1} \E^k_l (f)$. 
%Further
Now having $k,l\ge 2$ and a function $f:\Gr \to \C$ one can consider 
\begin{equation}\label{def:E_kl}
    \E^k_{l} (f) = \sum_{x_1,\dots,x_l} \Cf^k_l (f) (x_1,\dots,x_l) = \E^l_{k} (f) 
\end{equation}
and it was showed in 
%Appendix 
\cite[Proposition 30]{sh_Ek} that for a real function $f$ and even $k,l$ the formula $(\E^k_{l} (f))^{1/kl}$ defines a norm of our function $f$. 
If one put $l=1$ in \eqref{def:E_kl}, then we formally obtain $\E_1^k (f) =N(\sum_z f(x) )^k$ and this is not a norm for any $k$. Nevertheless, it is convenient to consider the quantities $\E_1^k (f)$ sometimes. 
 Notice that $\E^k_{l} (f) \ge 0$, 
 %as well as the triangle inequality for  $\E^k_{l} (f)$ takes place, 
 provided  at least one of $k,l$ is even but, nevertheless,  it cannot be a norm in this case, see \cite[Sections 4,7]{sh_Ek}.  
 A general family of norms, which includes the norms above is considered in the second part of the appendix. In particular, one can find the discussed properties of the energies $\E_{l}^k (f)$. 

%\bp 

Let $\eps \in (0,1]$ be a real number. We write $\mathcal{L} (\eps)$ for $\log(2/\eps)$.
%, if $\eps\le 1$ and $\mathcal{L} (\eps) = 1$ otherwise. 
Let us 
%assume 
make a convention 
that if a product is taken over an empty set, then it equals one. 
The signs $\ll$ and $\gg$ are the usual Vinogradov symbols.
When the constants in the signs  depend on a parameter $M$, we write $\ll_M$ and $\gg_M$. 
%If $a\ll_M b$ and $b\ll_M a$, then we write $a\sim_M b$. 
All logarithms are to base $2$.
By $\F_p$ denote $\F_p = \Z/p\Z$ for a prime $p$. %Let $\F^*_p = \F_p \setminus \{0\}$. 
%If we have a set $A$, then we will write $a \lesssim b$ or $b \gtrsim a$ if $a = O(b \cdot \log^c |A|)$, $c>0$.
Let us denote by $[n]$ the set $\{1,2,\dots, n\}$.

%\section{}

\section{Some results on $\E^k_l$--norms}
\label{sec:E^k_l}

In this section we obtain some generalization of Kelley--Meka results which were obtained for $\E_2^k$--norm to  $\E_l^k$--norm. Also, we  discuss some special properties of such norms. Our results naturally break down into two cases: uniform and non--uniform.

\subsection{Uniform sets in the sense of $\E^k_l$--norm}
\label{subsec:uniformity}

Let us give the  main definition of this subsection. 

\begin{definition}
    Let $\Gr$ be an abelian group, $A\subseteq \Gr$ be a set,  $|A|=\d N$, and $\eps>0$ be a parameter.
    %and $\| \cdot \|$ be a norm. 
    Then we say that $A$ is $\eps$--uniform relatively to (the energy) $\E^k_l$ if 
\begin{equation}\label{def:uniformity_kl} 
    \| f_A\|^{kl}_{\E^k_l} \le \eps^{kl} \d^{kl} N^{k+l} \,.
\end{equation}
%    Similarly, we that $A$ is $\eps$--uniform relatively to (the energy) $\mathcal{E}^k_{s,t}$ if  
%\begin{equation}\label{def:uniformity_kst} 
 %   \| f_A\|^{kst}_{\mathcal{E}^k_{s,t}} \le \eps^{kst} \d^{kst} N^{k+s+t} \,.
%\end{equation}
\end{definition}

Usually the number $\eps$ belongs to $(0,1]$ but sometimes $\eps>1$ and hence one can consider the quantity $\eps$ as the  definition of the energy $ \| f_A\|^{kl}_{\E^k_l}$, that is 
$\| f_A\|^{kl}_{\E^k_l} := \eps^{kl} \d^{kl} N^{k+l}$. 
%\eqref{def:uniformity_kl}
Further by the H\"older inequality, we have 
\begin{equation}\label{f:Holder_Ekl}
    (\E^{k-1}_{l} (f))^{k} \le (\E^{k}_{l} (f))^{k-1} N^l 
\end{equation} 
and hence if $A$ is $\eps$--uniform relatively to $\E^k_l$, then $A$ is $\eps$--uniform relatively to $\E^{k'}_{l'}$ for $k'\le k$, $l'\le l$. 
%In contrary, 
On the other hand, 
it is easy to see that a smaller norm does not control the higher one. 

%\bp 

\begin{example}
%{\bf Example.} 
Let $\Gr = \F_2^n$, $H< \F_2^n$, $\Lambda \subseteq \F_2^n/H$ be a random set such that $|\Lambda| = \d N/|H|$. 
Also, suppose that $\d^2 \gg |H|/N$ and thus with high probability $\Lambda - \Lambda \approx \F_2^n/H$. 
Let $A$ be the direct sum of $H$ and $\Lambda$, then $|A|=\d N$. 
It is easy to see that for a random $x\in A-A \approx \Gr$ one has $|A_x| \sim \d^2 N$ but for $x,y\in H$ one has $A_x= A$ and $A_{x,y} = A$.
Thus for any $k\ge 2$ the following holds 
\[\E_2^k (A) \sim  (\d^2 N)^k N + (\d N)^k |H| \sim (\d^2 N)^k N \,, \]
provided $|H| \ll \d^k N$ but taking an arbitrary $l$,  we see that
\[\E^l_3 (A) \sim (\d^3 N)^l N^2 + (\d N)^l |H|^2 \gg (\d N)^l |H|^2 \,, 
\]
provided $|H| \gg \d^l N$. 
It follows that one can take $l=k+1$ and construct a set $A$ such that $A$ is  $\E_2^k$--uniform but not $\E_3^{k+1}$--uniform. 
\end{example}

%\bp 

%\bp 

Now let us obtain the characteristic property of the energy $\E^k_l$.

\begin{lemma}
    Let $l,k \ge 2$ be even numbers, 
    %$lk$ be an even number 
    and $A_j \subseteq \Gr$, $j\in [l]$ be sets. 
    %$\eps$--uniform sets relatively to the energy $\E^k_l$.
    Then for any function $g:\Gr \to \R$ one has 
\begin{equation}\label{f:uniformity} 
    \sum_{x} \prod_{j=1}^l (f_{A_j} \circ g) (x)
        \le 
            \| g\|^{l(1-1/k)}_1 \|\Cf_l (g)\|^{1/k}_\infty  \cdot \prod_{j=1}^l \| f_{A_j} \|_{\ov{\E}^k_l} \,.
\end{equation}
    If the sets $A_j$ are the same, then the same is true for even $k$ only. 
\label{l:uniformity} 
\end{lemma}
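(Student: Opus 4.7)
The plan is to reduce to an $l$-fold product of single-function estimates via H\"older in $x$, and then to bring out the $\ov{\E}^k_l$-norm structure by exploiting the diagonal translation symmetry of $\Cf_l(g)$.

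First, writing $F_j(x) := (f_{A_j} \circ g)(x)$ and applying the $l$-fold H\"older inequality in $x$ (using $|F_j|^l = F_j^l$ because $l$ is even) gives
\[
\Bigl|\sum_x \prod_{j=1}^l F_j(x)\Bigr| \le \prod_{j=1}^l \Bigl(\sum_x F_j(x)^l\Bigr)^{1/l}.
\]
Expanding the $l$-th power yields $\sum_x F_j(x)^l = \sum_{\vec w \in \Gr^l} \bigl(\prod_i f_{A_j}(w_i)\bigr)\Cf_l(g)(\vec w)$. The crucial observation is that $\Cf_l(g)$ is invariant under the diagonal shift $\vec w \mapsto \vec w + u\mathbf{1}$ by~\eqref{f:E^k_l_symmetries}, so applying this shift followed by averaging over $u \in \Gr$ converts $\prod_i f_{A_j}(w_i)$ into $N^{-1}\Cf_l(f_{A_j})(\vec w)$ and yields the key identity
\[
\sum_x F_j(x)^l \;=\; \frac{1}{N}\sum_{\vec w} \Cf_l(f_{A_j})(\vec w)\,\Cf_l(g)(\vec w).
\]

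Next I would apply H\"older's inequality with conjugate exponents $(k,\,k/(k-1))$ to the $\vec w$ sum. Since $k$ is even, $|\Cf_l(f_{A_j})|^k = \Cf_l(f_{A_j})^k$ and the first factor equals $(\E^k_l(f_{A_j}))^{1/k}$. For the second factor, the bound $\Cf_l(g)^{k/(k-1)}\le \|\Cf_l(g)\|_\infty^{1/(k-1)}\Cf_l(g)$ together with the identity $\sum_{\vec w}\Cf_l(g)(\vec w)=N\|g\|_1^l$ gives $\|\Cf_l(g)\|_\infty^{1/k}\,N^{(k-1)/k}\,\|g\|_1^{l(k-1)/k}$. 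Using $\|f_{A_j}\|_{\ov{\E}^k_l}^l = (\E^k_l(f_{A_j})/N)^{1/k}$, the two factors recombine into $\sum_x F_j(x)^l \le \|f_{A_j}\|_{\ov{\E}^k_l}^l\cdot\|\Cf_l(g)\|_\infty^{1/k}\|g\|_1^{l(k-1)/k}$; taking $l$-th roots and multiplying over $j$ then reproduces~\eqref{f:uniformity} exactly.

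The main conceptual obstacle is spotting the diagonal translation-invariance identity---without it, a direct H\"older on $\sum_{\vec w}\prod_i f(w_i)\Cf_l(g)(\vec w)$ only yields $\|f\|_k^l$ on the right, which is generically incomparable to (and typically weaker than) $\|f\|_{\ov{\E}^k_l}^l$. In the ``same sets'' case $A_1=\dots=A_l=A$, the initial $l$-fold H\"older step can be bypassed, so only the inner exponent $k$ must be even in order for $\Cf_l(f_A)^k\ge 0$ to justify the Hölder step, while the parity of $l$ becomes irrelevant---matching the addendum in the lemma.
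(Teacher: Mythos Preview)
Your proof is correct and follows essentially the same route as the paper's. The paper writes the multilinear identity
\[
\sum_x \prod_{j=1}^l (f_{A_j}\circ g)(x)=N^{-1}\sum_{|z|=l}\Cf_l(f_{A_1},\dots,f_{A_l})(z)\,\Cf_l(g)(z)
\]
directly and then applies H\"older together with the Gowers--Cauchy--Schwarz inequality for $\E^k_l$ (cited as \cite[Lemma~29]{sh_Ek}) to split $\Cf_l(f_{A_1},\dots,f_{A_l})$ into $\prod_j\|f_{A_j}\|_{\ov\E^k_l}$. Your initial H\"older in $x$ accomplishes exactly the same splitting one step earlier, reducing to the diagonal identity; after that the two arguments coincide. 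Your version is slightly more self-contained since it avoids quoting the external lemma.

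One small correction: in your bound on the dual factor you treat $\Cf_l(g)$ as nonnegative, writing $\Cf_l(g)^{k/(k-1)}\le\|\Cf_l(g)\|_\infty^{1/(k-1)}\Cf_l(g)$ and $\sum_{\vec w}\Cf_l(g)(\vec w)=N\|g\|_1^l$. For general real $g$ neither statement is literally true; you need absolute values, namely $|\Cf_l(g)|^{k/(k-1)}\le\|\Cf_l(g)\|_\infty^{1/(k-1)}|\Cf_l(g)|$ and $\sum_{\vec w}|\Cf_l(g)(\vec w)|\le\sum_{\vec w}\Cf_l(|g|)(\vec w)=N\|g\|_1^l$. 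With that fix everything goes through unchanged.
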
 
\begin{proof} 
    By the H\"older inequality and \cite[Lemma 29]{sh_Ek} and the duality one has 
\[
    \sum_{x} \prod_{j=1}^l (f_{A_j} \circ g) (x)  = N^{-1} \sum_{|z|=l} \Cf_l (f_{A_1}, \dots, f_{A_l}) (z) \Cf_l (g) (z) 
\]
\[
    \le 
    N^{-1+1/k} \prod_{j=1}^l \| f_{A_j} \|_{\ov{\E}^k_l} \cdot 
    \left( \sum_{|z|=l} |\Cf^{k/(k-1)}_l (g) (z)| \right)^{1-1/k}
%\]
%\[
    \le 
    %\| f\|^l_{\ov{\E}^k_l}  
    \prod_{j=1}^l \| f_{A_j} \|_{\ov{\E}^k_l} \cdot 
    \|\Cf_l (g) \|^{1/k}_\infty \| g \|^{l(1-1/k)}_1
    %\,.
\]
%as required. 
This completes the proof. 
$\hfill\Box$
\end{proof}

\begin{corollary}
    Let $A,B\subseteq \Gr$ be sets, $|A|=\d N$, $|B| = \beta N$, and $l$ be a positive integer.  
    %For any $C>0$ take $k =\lceil C l \log (1/\beta) \rceil$ 
    Take $k =2 \lceil e l \log (1/\beta) \rceil$
    and suppose that $A$ is 
    $\eps$--uniform relatively to the energy $\E^k_l$.
    Then
\begin{equation}\label{f:A_circ_B}
    \sum_{x} (A\circ B)^l (x) \le  \delta^l |B|^l N  \cdot \min\{ 1.25 (1+\eps)^l, (1+1.25 \eps)^l\}  \,.
\end{equation}
\label{c:A_circ_B}
\end{corollary}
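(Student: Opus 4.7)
The plan is to decompose $A(y) = f_A(y) + \delta$ with $\delta = |A|/N$ and observe that, because $\delta|B|$ is a constant in $x$,
$$
(A\circ B)(x) = (f_A\circ B)(x) + \delta|B|.
$$
Binomially expanding and summing in $x$,
$$
\sum_x (A\circ B)^l(x) \;=\; \sum_{s=0}^{l}\binom{l}{s}(\delta|B|)^{l-s}\, T_s, \qquad T_s:=\sum_x(f_A\circ B)^s(x),
$$
isolates the main contribution $(\delta|B|)^l N$ coming from $T_0 = N$, and kills the linear term, since $T_1 = |B|\sum_y f_A(y) = 0$. The problem reduces to bounding $T_s$ for $s\ge 2$.

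For each such $s$, apply Lemma~\ref{l:uniformity} with its dummy index $l$ replaced by $s$, with $g = B$, and every $A_j = A$. Since $k = 2\lceil el\log(1/\beta)\rceil$ is automatically even, the parity hypothesis of the lemma (required when the $A_j$ coincide) is met for $s$ of either parity. Combining the trivial bound $\|\Cf_s(B)\|_\infty\le|B|$ with the monotonicity \eqref{f:Holder_Ekl} (which, together with the symmetry $\E^k_l = \E^l_k$, transfers $\varepsilon$-uniformity from $\E^k_l$ to $\E^k_s$ for every $s\le l$) yields
$$
T_s \;\le\; \varepsilon^s(\delta|B|)^s\,\beta^{-(s-1)/k}\,N.
$$
Substituting back into the binomial sum,
$$
\sum_x (A\circ B)^l(x) \;\le\; (\delta|B|)^l N\cdot\Big(1 + \sum_{s=2}^{l}\binom{l}{s}\varepsilon^s\beta^{-(s-1)/k}\Big).
$$

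The particular choice $k\ge 2el\log(1/\beta)$ is calibrated precisely so that the parasitic factor $\beta^{-1/k}$ is harmless: one has $\beta^{-l/k}\le 2^{1/(2e)} < 1.25$, and therefore $\beta^{-(s-1)/k} < 1.25$ uniformly in $s\le l$. Both halves of the advertised minimum then emerge from this single display. Using $\beta^{-(s-1)/k}\le 1.25$ pointwise inside the sum and comparing to the binomial expansion of $(1+\varepsilon)^l$ delivers the first bound $1.25(1+\varepsilon)^l$. Alternatively, regrouping the typical term as
$$
\binom{l}{s}\varepsilon^s\beta^{-(s-1)/k} \;=\; \beta^{1/k}\binom{l}{s}(\varepsilon\beta^{-1/k})^s \;\le\; \binom{l}{s}(1.25\varepsilon)^s
$$
(using $\beta^{1/k}\le 1$ and $\varepsilon\beta^{-1/k}\le 1.25\varepsilon$) and summing gives the second bound $(1+1.25\varepsilon)^l$.

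The principal delicate point is propagating $\varepsilon$-uniformity from $\E^k_l$ down to $\E^k_s$ for an arbitrary $s\le l$, of either parity and with $l$ itself permitted to be odd; this is handled by iterating the H\"older-type inequality \eqref{f:Holder_Ekl}. Once that is in place, the rest is a direct binomial calculation, and the one substantive quantitative tuning is the choice of $k$ that makes $\beta^{-l/k}$ a bounded constant regardless of how small $\beta$ may be.
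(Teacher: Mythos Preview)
Your proof is correct and follows essentially the same route as the paper: decompose $A=f_A+\delta$, binomially expand $(f_A\circ B+\delta|B|)^l$, bound each term $\sum_x(f_A\circ B)^s(x)$ via Lemma~\ref{l:uniformity}, propagate $\eps$--uniformity from $\E^k_l$ down to $\E^k_s$ by H\"older, and finally exploit $k\ge 2el\log(1/\beta)$ to make $\beta^{-(s-1)/k}\le 1.25$. In fact you are a bit more careful than the paper in separating out the $s=0$ and $s=1$ terms and in spelling out how each of the two bounds in the minimum is obtained.
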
 
\begin{proof} 
    Using the formula $A(x) = f_A(x) +\d$, combining with Lemma \ref{l:uniformity},  we see that the left--hand side of \eqref{f:A_circ_B} is 
\[
    \sum_{j=0}^l \binom{l}{j} (\d |B|)^{l-j} \sum_x (f\circ B)^j (x) 
    \le 
    \sum_{j=0}^l \binom{l}{j} (\d |B|)^{l-j} \| f\|^j_{\ov{\E}^k_j} |B|^{j(1-1/k)+1/k} 
\]
\[
    \le \d^l |B|^{l} N \sum_{j=0}^l \binom{l}{j} \eps^j \beta^{-(j-1)/k}
    \le \frac{5}{4} \delta^l |B|^l N (1+\eps)^l 
    %\,.
\]
as required and similarly the second bound. 
This completes the proof. 
$\hfill\Box$
\end{proof}

\bp

%\begin{remark}
   Let us remark that, of course,  the energy $\E_2^k$ solely allows us to control sums from \eqref{f:A_circ_B} but our task is to obtain the correct power of $\d$ and $|B|$ in the right--hand side of this estimate. 
    %estimate \eqref{f:A_circ_B}
%%    More precisely, we can the following characterization.  
%\end{remark}

%\begin{proposition}
%\end{proposition}

\bp 

%*%
%We need one more result about uniform set, which we will use in Section \ref{sec:corners}. 
We need one more result about uniform set, which is useful for applications.

\begin{lemma}
    Let $k,l$ be even numbers,
    $A_1,\dots, A_l \subseteq \Gr$ be sets, $|A_j| = \d_j N$. 
    Suppose that all $A_j$ are $\eps$--uniform relatively to $\E^k_l$ and 
    %$\eps \le (2l)^{-l}$. 
    $2 l \eps^k \le 1$. 
    Then 
\begin{equation}\label{f:dispersion}
    \sum_{|x|=l} \left( \Cf_l (A_1,\dots, A_l) (x) - N\prod_{j=1}^l \d_j \right)^k
    \le 
    \eps^{k} 
    %(2l)^{kl} 
    l 2^{kl+1}
    N^{l+k} \left( \prod_{j=1}^l \d_j  \right)^k \,.
\end{equation} 
\label{l:dispersion}
\end{lemma}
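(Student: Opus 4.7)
My plan is to expand $\Cf_l(A_1,\ldots,A_l)$ via the decomposition $A_j = f_{A_j} + \delta_j$, isolate the main term, and then bound the resulting fluctuation by a triangle/Hölder combination. Writing $A_j(y) = f_{A_j}(y) + \delta_j$ and expanding by multilinearity,
$$\Cf_l(A_1,\ldots,A_l)(x) = \sum_{S \subseteq [l]} \Big(\prod_{j \notin S}\delta_j\Big)\,\mathcal{C}_S(x_S), \qquad \mathcal{C}_S(x_S) := \sum_z \prod_{j \in S}f_{A_j}(z+x_j),$$
where $\mathcal{C}_S$ depends only on the coordinates $(x_j)_{j \in S}$. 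The $S=\emptyset$ contribution is exactly $N\prod_j\delta_j$, so the quantity $D(x) := \Cf_l(A_1,\ldots,A_l)(x) - N\prod_j\delta_j$ is the sum over $\emptyset\neq S\subseteq [l]$ of the remaining terms.

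Since $k$ is even, $D(x)^k\geq 0$, and the triangle inequality together with the elementary bound $(\sum_{i=1}^M |a_i|)^k \leq M^{k-1}\sum_i |a_i|^k$ gives
$$D(x)^k \leq (2^l - 1)^{k-1}\sum_{\emptyset \ne S \subseteq [l]}\Big(\prod_{j \notin S}\delta_j\Big)^k \mathcal{C}_S(x_S)^k.$$
Summing over $x \in \Gr^l$ and exploiting that each $\mathcal{C}_S$ is constant in the $l - |S|$ coordinates outside $S$ yields a factor $N^{l - |S|}$, reducing the problem to bounding $\sum_{x_S}\mathcal{C}_S(x_S)^k$ for each non-empty $S$.

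Set $s := |S|$. Expanding the $k$th power and exchanging summations,
$$\sum_{x_S}\mathcal{C}_S(x_S)^k = \sum_{z \in \Gr^k}\prod_{j \in S}\Cf_k(f_{A_j})(z),$$
which is non-negative. Generalised Hölder bounds the right side by $\prod_{j \in S}\bigl(\sum_z|\Cf_k(f_{A_j})(z)|^s\bigr)^{1/s}$. Since $l$ is even, $|\Cf_k(f)|^l = \Cf_k(f)^l$, so Hölder in the exponent (with pair $l/s,\ l/(l-s)$) gives
$$\sum_z|\Cf_k(f_{A_j})(z)|^s \leq \E^k_l(f_{A_j})^{s/l}\, N^{k(l-s)/l}.$$
Plugging in the uniformity hypothesis $\E^k_l(f_{A_j}) \leq \eps^{kl}\delta_j^{kl}N^{k+l}$ and taking the product over $j \in S$, a short computation yields $\sum_{x_S}\mathcal{C}_S(x_S)^k \leq \eps^{ks}\prod_{j \in S}\delta_j^k \cdot N^{k+s}$.

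Substituting back and combining the $\prod_{j\notin S}\delta_j^k$ factor with $\prod_{j\in S}\delta_j^k$ into $\prod_{j=1}^l\delta_j^k$, and collecting $N$-powers into $N^{l+k}$, we obtain
$$\sum_x D(x)^k \leq 2^{l(k-1)} N^{l+k}\prod_{j=1}^l\delta_j^k \cdot \sum_{s=1}^l\binom{l}{s}\eps^{ks} = 2^{l(k-1)} N^{l+k}\prod_j\delta_j^k\bigl((1+\eps^k)^l-1\bigr).$$
Under the assumption $2l\eps^k \leq 1$, a geometric-series estimate gives $(1+\eps^k)^l - 1 \leq 2l\eps^k$, and the bound closes with constant $l\cdot 2^{l(k-1)+1} \leq l\cdot 2^{kl+1}$. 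The main delicate point is the Hölder interpolation linking the intermediate energy $\E^k_s$, which arises naturally from the decomposition, to the top-level energy $\E^k_l$ that is controlled by hypothesis; the sign issues for odd $s$ are absorbed by using $|\Cf_k(f)|^s$ and the evenness of $l$.
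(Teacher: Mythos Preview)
Your proof is correct and follows essentially the same strategy as the paper: decompose each $A_j = f_{A_j} + \delta_j$, isolate the main term, apply the power--mean/H\"older bound to the remaining sum over $\emptyset\neq S\subseteq [l]$, control each term via uniformity, and close with $(1+\eps^k)^l-1\le 2l\eps^k$. The only real difference is that where the paper invokes \cite[Lemma 29]{sh_Ek} to obtain the bound on $\sum_x F_S(x)^k$, you unwind this by hand via the duality identity $\sum_{x_S}\mathcal{C}_S(x_S)^k=\sum_{z\in\Gr^k}\prod_{j\in S}\Cf_k(f_{A_j})(z)$ followed by generalised H\"older and the interpolation $\|\Cf_k(f)\|_s\le \|\Cf_k(f)\|_l\cdot N^{k(1/s-1/l)}$; this is exactly the content of the cited lemma, so your argument is a self--contained version of the paper's.
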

\begin{proof} 
    Put $\Pi = \prod_{j=1}^l \d_j$. 
Then the left--hand side of \eqref{f:dispersion} is 
\[
    \sigma:= \sum_{|x|=l} \left( \sum_{\emptyset \neq S \subseteq [l]} \Cf_{l} (f_{1}, \dots, f_l) (x) \right)^k
    = \sum_{|x|=l} \left( \sum_{\emptyset \neq S \subseteq [l]} F_S (x) \right)^k \,,
\]
    where for $j\in S$ we put $f_j=f_{A_j}$ and if $j\notin S$, then $f_j = \d_j$. 
    Using $\eps$--uniformity of all sets $A_j$, combining with \cite[Lemma 29]{sh_Ek}, we get 
\[
    \| F_S\|^{kl}_{\E^k_l} \le \eps^{|S|k} \Pi^k N^{l+k} \,.
\]
%    Thus by the norm property of $\E^k_l$ one has 
    Thus by the H\"older inequality one has
\[
    \sigma  \le 2^{kl} \sum_{\emptyset \neq S \subseteq [l]} \| F_S\|^{kl}_{\E^k_l}
    \le 
    2^{kl} \Pi^k N^{k+l} ((1+\eps^k)^l - 1)
    \le l2^{kl+1} \eps^k \Pi^k N^{k+l} 
\]
\begin{comment}
\[
    \sigma \le  \left( \sum_{\emptyset \neq S \subseteq [l]} \| F_S\|_{\E^k_l} \right)^{kl}
    \le \Pi^k N^{k+l}   \left( \sum_{\emptyset \neq S \subseteq [l]} \eps^{|S|/l} \right)^{kl}
    \le \Pi^k N^{k+l} \left( (1+\eps^{1/l})^l -1 \right)^{kl}
\]
\[
    \le \Pi^k N^{k+l} (2l)^{kl} \eps^k 
    %\,.
\]
\end{comment} 
as required. 
%This completes the proof. 
$\hfill\Box$
\end{proof}

\bp

Let us consider one more  example which shows that one can delete/add a tiny subset from a non--uniform set to obtain a uniform one. This phenomenon has no place if we consider the classical uniformity in terms of the Fourier transform or in terms of Gowers norms \cite{Gowers_m}, say. The reason is  normalization \eqref{def:uniformity_kl}, of course. 

%\bp 

%{\bf Example.} 
\begin{example} 
\label{exm:removing}
Let $\Gr = \F_2^n$, $H< \F_2^n$, $|H|=\beta N$, $\Lambda \subseteq \Gr$ be a random set, $|\Lambda| = \d N$, $\beta \le \d$ and put $A=\tilde{H} \bigsqcup \Lambda$, where $\tilde{H} = H \setminus \Lambda$. Then with high probability  $|\tilde{H}| \sim \beta(1-\d) N = \eps |A|$, the set $\Lambda$ is uniform in any possible sense but $A$ is non $\eta$--uniform set with rather large $\eta$. 
Indeed, by Kelley--Meka method \cite{kelley2023strong} or just see Lemma \ref{l:uniformity}, we know that 
\begin{equation}\label{tmp:23.03_1}
    \sigma:= \sum_{x} (A\circ A)(x) H(x) = (\d+(1-\d)\beta)^2 \beta N^2 + \theta \eta^2 \d^2 \beta N^2 \,,
\end{equation}
    where $|\theta| \le 4$, say,  is a certain number and  
    $A$ is supposed to be $\E_2^k$--uniform with $k \sim \L(\beta)$.
    On the other hand, the direct calculation shows 
\begin{equation}\label{tmp:23.03_2}
    \sigma =  |\tilde{H}|^2 + 2\d\beta^2 (1-\d) N^2 + \d^2 |H|N 
\end{equation}
    plus a negligible  error term.
    Comparing \eqref{tmp:23.03_1} and \eqref{tmp:23.03_2}, we obtain 
\[
    \eta^2 \d^3 \eps \gg 2\d \beta \eps + \beta \eps^2 -  \eps^2 \d^2 \gg \d \eps^3 
\]
    and thus $\eta \gg \eps/\d$ which is much larger than $\eps$ for small $\d$.

    Similarly, 
    %let us  
    one can 
    show that deleting 
    %from an arbitrary  set $A$ 
    a subspace $H$ from a random set $\Lambda$, $|H| = \eps |A|$, $H$ lives on the first coordinates, say, we obtain a non $\eta$--uniform set with $\eta \gg 1$ thanks to the equality $\sum_{x} H(x) (A\circ A) (x) = 0$. 
\end{example} 

%\bp 

\subsection{Non--uniformity and almost periodicity}
\label{subsec:NU+AP}

The aim of this subsection is to obtain Sanders' almost periodicity result for {\it higher  convolutions}, see Lemma \ref{l:Sanders_V} below. 

%\cite{sanders2011roth}, \cite{sanders2012bogolyubov}, \cite{sanders2013structure}  and  \cite{CS}, \cite{SS4}, reflecting the almost periodicity properties of \cite[Theorem 3.2]{SS4} \cite{CS}  (also, see \cite{sanders2011roth}, \cite{sanders2012bogolyubov}, \cite{sanders2013structure} and, especially, \cite[Theorem 3.2]{SS4})

%Now 
At the beginning 
we want to transfer 
%the large 
a lower bound for the 
energy $\E^k_l (f_A)$ to 
%large energy 
the largeness of the energy 
$\E^k_l (A)$.  
We follow a more simple method from \cite{bloom2023kelley} which differs from the approach of \cite{kelley2023strong} by some logarithms. The dependence on $l$ in the first multiple in  \eqref{f:e_to_d} is, probably, can be improved  significantly (also, see Remark \ref{r:ET_fail} from the appendix)
but in our regime $l=O(1)$ and 
%the dependence  
thus it is not so critical for us.

\begin{lemma}
    Let $A\subseteq \Gr$ be a set, $|A|= \d N$ and 
%    $\eps \in (0,1)$ be a parameter.
    $\eps>0$ be a parameter, $\eps_* := \min \{\eps, 1\}$.
    Suppose that for an odd $k\ge 5$ one has 
\begin{equation}\label{cond:e_to_d}
    \E_l^k (f_A) = \eps^{lk} \d^{lk} N^{l+k} \,,
\end{equation} 
    and that for $k_* = O(kl \eps^{-l}_* \L(\eps_*))$ the set $A$ is $\frac{\eps \eps^{l-1}_*}{8l}$--uniform relatively to $\E^{k_*}_{l-1}$.
    Then there is an even  $k_1 \le k_*$ such that 
\begin{equation}\label{f:e_to_d}
    \E_l^{k_1} (A) \ge \left(1+\frac{\eps \eps^{l-1}_*}{8l}  \right)^{lk_1} \d^{lk_1} N^{l+k_1} \,.
\end{equation} 
\label{l:e_to_d}
\end{lemma}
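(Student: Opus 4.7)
The goal is to convert the hypothesis on the balanced--function energy $\E_l^k(f_A)$ into a comparable lower bound on the set energy $\E_l^{k_1}(A)$ for some even $k_1\le k_*$. I would follow Bloom's streamlined version of the Kelley--Meka argument, adapted to higher $l$.

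\textbf{Step 1 (parity adjustment).} Because $k$ is odd, first pass to an even exponent. Hölder's inequality, applied to $\sum_{x}|\Cf_l(f_A)(x)|^k\cdot 1$ with exponents $(k+1)/k$ and $k+1$, gives
\[
    \E_l^k(f_A) \le \sum_{x}|\Cf_l(f_A)(x)|^k \le \E_l^{k+1}(f_A)^{k/(k+1)}\cdot N^{l/(k+1)}\,,
\]
and a direct arithmetic simplification shows $\E_l^{k+1}(f_A)\ge \eps^{l(k+1)}\delta^{l(k+1)} N^{l+k+1}$. From here I work with the even exponent $k+1$.

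\textbf{Step 2 (binomial decomposition).} Substituting $A=f_A+\delta$ in $\Cf_l(A)$ and using $\sum_z f_A(z)=0$ to kill every term whose index set has size $1$, one obtains
\[
    \Cf_l(A)(x_1,\dots,x_l) = \delta^l N + \Cf_l(f_A)(x) + E(x),\qquad
    E(x)=\sum_{2\le |S|\le l-1}\delta^{\,l-|S|}\,\Cf_{|S|}(f_A)(x_S)\,.
\]
The constant $\delta^l N$ is the mean of $\Cf_l(A)$ on $\Gr^l$, and $\sum_x E(x)=0$.

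\textbf{Step 3 (control of $E$).} The $\eta$--uniformity of $A$ relative to $\E^{k_*}_{l-1}$, $\eta:=\eps\eps_*^{l-1}/(8l)$, together with \eqref{f:Holder_Ekl}, controls $\E^{k_*}_{s}(f_A)$ for every $2\le s\le l-1$. Running a dispersion argument analogous to Lemma \ref{l:dispersion} on each term of $E$ and using the triangle inequality across the subset decomposition yields
\[
    \sum_{x}E(x)^{k_*}\le (C_l\,\eta\,\delta^l N)^{k_*}\cdot N^{l},
\]
with $C_l$ depending only on $l$. By power--mean monotonicity the same estimate (up to a harmless constant) passes to every even $k_1\in[k+1,k_*]$.

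\textbf{Step 4 (transfer and pigeonhole).} The decomposition of Step 2 gives the identity
\[
    \sum_{x}\Cf_l(A)(x)\,\Cf_l(f_A)^k(x) = \delta^l N\cdot \E_l^k(f_A) + \E_l^{k+1}(f_A) + \sum_{x} E(x)\,\Cf_l(f_A)^k(x)\,.
\]
The first two right--hand summands are large by the hypothesis and Step 1; the last summand is negligible by Cauchy--Schwarz together with the bound of Step 3. On the left--hand side, Hölder's inequality bounds the inner product by $\E_l^{k_1}(A)^{1/k_1}$ times moments of $\Cf_l(f_A)$. Choosing the even $k_1$ by a dyadic pigeonhole over the range $[k+1,k_*]$ and iterating $O(l\eps_*^{-l}\mathcal{L}(\eps_*))$ times produces the amplification $(1+\eta)^{lk_1}$ required by \eqref{f:e_to_d}. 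The number of pigeonhole levels is exactly what fixes the parameter budget $k_*=O(kl\eps_*^{-l}\mathcal{L}(\eps_*))$.

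\textbf{Main obstacle.} The technical heart is Step 4. The uniformity gain $\eta=\eps\eps_*^{l-1}/(8l)$ is small, so the target amplification $(1+\eta)^{lk_1}$ only becomes non--trivial once $k_1\gtrsim 1/\eta\asymp l\eps_*^{-(l-1)}/\eps$. Hence $k_*$ must accommodate this scale, and the extra factor $\eps_*^{-1}$ (the jump from $\eps_*^{-(l-1)}$ to $\eps_*^{-l}$) arises from truncating the tail contributions of $\Cf_l(f_A)$ at the right threshold so that the loss from Step 3 is strictly dominated by the gain from Step 4. Calibrating these scales--small enough error, large enough signal--within the fixed budget of $k_*$ is the delicate part of the argument.
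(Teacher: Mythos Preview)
Your Steps 1--3 assemble the right ingredients (parity adjustment, the binomial decomposition $\Cf_l(A)=\delta^lN+\Cf_l(f_A)+\mathcal E$, and control of $\|\mathcal E\|_{k_1}$ via the $\E^{k_*}_{l-1}$--uniformity), but Step 4 is not a proof. Applying H\"older to $\sum_x \Cf_l(A)(x)\,\Cf_l(f_A)^k(x)$ bounds that inner product by $\E_l^{k_1}(A)^{1/k_1}$ times a moment $\bigl(\sum_x |\Cf_l(f_A)|^{k k_1/(k_1-1)}\bigr)^{(k_1-1)/k_1}$, and to extract the factor $(1+\eta)^{lk_1}$ you would need a matching \emph{upper} bound on that moment. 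Step 1 only supplies a lower bound on $\E_l^{k+1}(f_A)$; nothing in your sketch produces the complementary upper bound, and the phrases ``dyadic pigeonhole'' and ``iterating $O(l\eps_*^{-l}\mathcal L(\eps_*))$ times'' do not explain where the amplification comes from.

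The paper's argument supplies exactly the missing mechanism, and it is a level--set rather than an integral--H\"older argument. One first restricts to $P_\eps=\{x:\Cf_l(f_A)(x)\ge \tfrac34\eps^l\delta^lN\}$ and shows that $|P_\eps|$ is large. This uses a dichotomy: either $\E_l^{2k}(A)$ already satisfies the conclusion with $k_1=2k$, or the triangle inequality $\|f_A\|_{\E_l^{2k}}\le \|A\|_{\E_l^{2k}}+\|\delta\|_{\E_l^{2k}}$ yields an \emph{upper} bound on $\E_l^{2k}(f_A)$, which combined with the hypothesis on $\E_l^k(f_A)$ and Cauchy--Schwarz forces $|P_\eps|\ge (2\eps_*/5)^{2kl}N^l$. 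On $P_\eps$ one has the pointwise inequality $\delta^lN+\Cf_l(f_A)\ge (1+\tfrac34\eps^l)\delta^lN$, hence
\[
\|\delta^lN+\Cf_l(f_A)\|_{k_1}^{k_1}\ \ge\ |P_\eps|\,\bigl((1+\tfrac34\eps^l)\delta^lN\bigr)^{k_1},
\]
and choosing $k_1\asymp kl\eps_*^{-l}\mathcal L(\eps_*)$ makes $(1+\tfrac34\eps^l)^{k_1}$ absorb the density deficit $|P_\eps|/N^l$. Your Step~3 bound on $\|\mathcal E\|_{k_1}$ then finishes via the $L^{k_1}$ triangle inequality $\|\Cf_l(A)\|_{k_1}\ge \|\delta^lN+\Cf_l(f_A)\|_{k_1}-\|\mathcal E\|_{k_1}$. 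The ``tail truncation'' you allude to in the obstacle paragraph is precisely this level--set step; it needs to be made explicit, and it---not a pigeonhole over $k_1$---is what actually generates the $(1+\eta)^{lk_1}$ gain.
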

\begin{proof} 
    Write $f(x)= f_A (x)$. 
   We have 
\[
    \eps^{lk} \d^{lk} N^{l+k} \le \sum_{|x|=l} \Cf^{k-1}_l (f) (x) |\Cf_l (f) (x)| + \sum_{|x|=l} \Cf^{k}_l (f) (x) 
    = 2 \sum_{|x|=l} \Cf^{k-1}_l (f) (x) \cdot \max\{ 0, \Cf_l (f) (x)\} \,,
\]
    and thus considering the set $P = \{ x ~:~ \Cf_l (f) (x) \ge 0\}$, we get 
\begin{equation}\label{tmp:03.03_1}
    \sum_{x\in P} \Cf^{k}_l (f) (x) \ge 2^{-1} \eps^{lk} \d^{lk} N^{l+k} \,.
\end{equation} 
    Now let consider the subset of the set $P$
\[
    P_\eps := \{ x ~:~ \Cf^{}_l (f) (x) \ge \frac{3}{4} \eps^l \d^l N\} \,.
\]
    Then we have 
\begin{equation}\label{tmp:03.03_2}
    \sum_{x\notin P_\eps} \Cf^{k}_l (f) (x) \le 
    \left( \frac{3}{4} \eps^l \d^l N \right)^k N^l 
    \le 
    2^{-2} \eps^{lk} \d^{lk} N^{l+k} \,.  
\end{equation} 
    Combining \eqref{tmp:03.03_1}, \eqref{tmp:03.03_2} and using the H\"older inequality, we obtain 
\begin{equation}\label{tmp:03.03_3}
    |P_\eps| \E^{2k}_{l} (f) \ge 2^{-4} \eps^{2lk} \d^{2lk} N^{2l+2k} \,. 
\end{equation} 
    By the norm property of $\E^{2k}_{l} (f)$ for positive functions, see \cite[Proposition 30]{sh_Ek} one has 
\[
    \E^{2k}_{l} (f) \le \left( \| A\|_{\E^{2k}_l} + \| \d\|_{\E^{2k}_l} \right)^{2kl} 
    \le 
    (2+\eps/2)^{2kl} \d^{2kl} N^{l+2k} 
%    <
%     (5\d/2)^{2kl} N^{l+2k} 
\]
    otherwise there is nothing to prove with $k_1=2k$ and much larger $\eps$. 
    Thus we derive from \eqref{tmp:03.03_3} that 
    $|P_\eps| \ge (2\eps_*/5)^{2kl} N^l$. 
    %and hence
    Now 
\[
    \Cf_l (A) (x) = \Cf_l (f+\d) (x) =\d^l N + \Cf_l (f) + \sum_{ S\subseteq [l] ~:~ 1\le |S|<l} \d^{l-|S|} \Cf_{|S|} (f) (x_S) = \d^l N + \Cf_l (f) + \mathcal{E} (x) \,,
\]
    where for a set $S\subseteq [l]$
    %we have denoted 
    the vector $x_S$ has coordinates $x_j$, $j\in S$. 
    By the triangle inequality for $L_{k_1}$--norm, we have 
\begin{equation}\label{tmp:06.03_1}
    (\E^{k_1}_{l} (A))^{1/k_1} = \| \Cf_l (A) \|_{k_1} = \| \Cf_l (f+\d) \|_{k_1}
    \ge 
    \| \d^l N + \Cf_l (f) \|_{k_1} - \| \mathcal{E}\|_{k_1} \,.
\end{equation} 
    Using our bound for the cardinality of the set $P_\eps$, we get
\[
    \| \d^l N + \Cf_l (f) \|^{k_1}_{k_1} \ge \sum_{x\in P_\eps} (\d^l N + \Cf_l (f))^{k_1} (x) \ge (2\eps_*/5)^{2kl} N^l \cdot (1+3\eps^l/4)^{k_1} \d^{lk_1} N^{k_1} 
\]
\begin{equation}\label{tmp:06.03_2}
    \ge 
    (1+\eps^l/2)^{k_1} \d^{lk_1} N^{k_1+l}  \,,
\end{equation} 
    provided $k_1 \ge 20 kl \eps^{-l}_* \L(\eps_*)$. 
    On the other hand, by our assumption the set $A$ is $\zeta:=\frac{\eps^l}{8l}$--uniform relatively to 
    $\E^{k_*}_j$ for all $j<l$ and $k_* = k_1$.
    It follows that 
\begin{equation}\label{tmp:06.03_3}
    \| \mathcal{E}\|_{k_1} \le \sum_{ S\subseteq [l] ~:~ 1\le |S|<l} \d^{l-|S|}
    N^{\frac{l-|S|}{k_1}} \| f\|^{|S|}_{\E^{k_1}_{|S|}} 
    \le \d^l 
    N^{\frac{l+k_1}{k_1}} ((1+\zeta)^{l}-1) \,.
\end{equation} 
    Combining \eqref{tmp:06.03_1}, \eqref{tmp:06.03_2} and \eqref{tmp:06.03_3}, we obtain 
\[
    \E^{k_1}_{l} (A) \ge \d^{lk_1} N^{k_1+l} \left( 2 +\eps^l/2  -  (1+\zeta)^{l} ) \right)^{k_1}
    \ge  \d^{lk_1} N^{k_1+l} \left(1+\frac{\eps \eps^{l-1}_*}{8l} \right)^{lk_1} 
\]
as required. 
%This completes the proof. 
$\hfill\Box$
\end{proof}

\bp 

\begin{comment} 
The following result shows that removing small subsets of a set $A\subseteq \Gr$ we do not decrease the energy $\E_l^k (A)$ too much. 

\begin{lemma}
    Let $U\subseteq A \subseteq \Gr$, $A':=A\setminus U$  be sets, $|A|= \d N$, $|U|=\o |A|$, and 
\begin{equation}\label{cond:A,U}
    \E_l^k (f_A) = \eps^{lk} \d^{lk} N^{l+k} \,.
%    \E^k_l (A) \ge (1+\eps)^{lk} \d^{lk} N^{l+k} \,.
\end{equation}
    Suppose that  for $k_* = O(kl \eps^{-l}_* \L(\eps_*))$ the set $A$ is $\frac{\eps^l}{8l}$--uniform relatively to $\E^{k_*}_{l-1}$.
    Then 
    Then 
\begin{equation}\label{f:A,U}
    \E^k_l (A') \ge (1+\eps)^{lk} \d^{lk} N^{l+k} \,.
\end{equation}
\label{l:A,U}
\end{lemma}
\begin{proof} 
    Applying Lemma \ref{l:e_to_d} we find $k_1 \le k_*$ such that 
\begin{equation*}\label{}
    \E_l^{k_1} (A) \ge \left(1+\frac{\eps \eps^{l-1}_*}{8l}  \right)^{lk_1} \d^{lk_1} N^{l+k_1} \,.
\end{equation*} 
    Now $\| A'\|_{\E^k_l} \ge \| A\|_{\E^k_l} - \| U \|_{\E^k_l}$ and 
    $\| U \|^{kl}_{\E^k_l} \le \sum_x $
as required. 
%This completes the proof. 
$\hfill\Box$
\end{proof}

\bigskip 
\end{comment} 

Now we use duality \eqref{def:E_kl} to  obtain an appropriate version of multi--dimensional version of the Balog--Szemer\'edi--Gowers theorem as was done in \cite{Schoen_BSzG} (also, see \cite[Theorem 17]{sh_str_survey}).
Thanks to duality \eqref{f:E^k_st_duality} one can show that 
%the same 
a similar 
result takes place for more general energies $\mathcal{E}^k_{s,t}$, see the appendix. 
Of course, in this case one needs to replace 
%$N^{-1} \Cf_{|x|} (\cdot) (x)$ to $N^{-2} \Cf_{|x||z|} (\cdot) (x\oplus z)$ or  $N^{-2} \Cf_{|y||z|} (\cdot) (y\oplus z)$ 
$\Cf_{|x|} (\cdot) (x)$ to $\Cf_{|x||z|} (\cdot) (x\oplus z)$ or  $\Cf_{|y||z|} (\cdot) (y\oplus z)$ 
and use symmetries \eqref{f:E^k_st_symmetries}  instead of the symmetry \eqref{f:E^k_l_symmetries} below.

\begin{lemma}
    Let $A \subseteq \Gr$ be a set, $|A|= \d N$ and $\eps>0$, $\eta \in (0,1/2)$ be parameters. 
    Suppose that for some integers $k,l\ge 2$ with $kl \ge 4\eps^{-1}_* \L(\eta)$ one has  
\begin{equation}\label{cond:BSzG_k}
    \E^k_l (A) \ge (1+\eps)^{lk} \d^{lk} N^{l+k} \,.
\end{equation} 
    Define the set 
\begin{equation}\label{def:BSzG_k}
    S = \{ |x|=l ~:~ \Cf_l (A) (x) \ge (1+\eps/4)^l \d^l N \} \,.
\end{equation} 
    Then there is a set $B$ such that 
\begin{equation}\label{f:BSzG_k}
    N^{-1} \sum_{|x|=l} S(x) \Cf_l (B) (x) \ge (1-2\eta) |B|^l \,,
\end{equation} 
    and 
$|B| > 2^{-1/(l-1)} (1+\eps)^k  \d^k N$.
\label{l:BSzG_k}
\end{lemma}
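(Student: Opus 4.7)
My plan is to mimic Schoen's duality strategy from \cite{Schoen_BSzG}, exploiting the symmetry $\E^k_l (A) = \E^l_k (A)$. For each tuple $y = (y_1,\dots,y_k) \in \Gr^k$ I would set $B_y := \bigcap_{j=1}^{k} (A - y_j)$, so that $|B_y| = \Cf_k (A)(y)$. Three identities drive the argument: $\sum_y |B_y| = \d^k N^{k+1}$; $\sum_y |B_y|^l = \E^l_k (A) = \E^k_l (A)$; and, by expanding $\Cf_l(A)^k(x) = \sum_y \prod_{i,j} A(y_j + x_i) = \sum_y \prod_i B_y(x_i)$,
\[
T_S := \sum_{x \in S} \Cf_l^k (A)(x) = \sum_y |S \cap B_y^l|.
\]
The set $B$ in the conclusion will be one of these $B_y$.

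First I would localise $\E^k_l(A)$ on $S$. Since $\Cf_l(A)(x) < (1+\eps/4)^l \d^l N$ for $x \notin S$ and $\sum_{|x|=l} \Cf_l(A)(x) = \d^l N^{l+1}$, pulling out the sup bound yields $T_{S^c} \le (1+\eps/4)^{l(k-1)} \d^{lk} N^{l+k}$. The elementary estimate $(1+\eps)/(1+\eps/4) \ge 1 + (3/5)\eps_*$ together with $\log_2(1+x) \ge x$ on $[0,1]$ and the hypothesis $lk \ge 4\eps_*^{-1}\L(\eta)$ gives
\[
\frac{(1+\eps)^{lk}}{(1+\eps/4)^{l(k-1)}} \ge \left(\frac{1+\eps}{1+\eps/4}\right)^{lk} \ge 2^{(3/5)\eps_* lk} \ge (2/\eta)^{12/5} \ge 2/\eta,
\]
so $T_{S^c} \le (\eta/2)\E^k_l(A)$ and $T_S \ge (1 - \eta/2)\E^k_l(A)$.

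Next I would apply Markov to the ratios $r_y := |S \cap B_y^l|/|B_y|^l \in [0,1]$ weighted by $|B_y|^l$: the weighted average of $1 - r_y$ is at most $\eta/2$, so the ``good'' set $Y := \{y : r_y \ge 1 - 2\eta\}$ carries at least $\tfrac34$ of the weight, i.e.\ $\sum_{y \in Y} |B_y|^l \ge \tfrac34 \E^k_l(A) \ge \tfrac34 (1+\eps)^{lk} \d^{lk} N^{l+k}$. With $M := 2^{-1/(l-1)}(1+\eps)^k \d^k N$, if every $y \in Y$ had $|B_y| \le M$ then
\[
\sum_{y \in Y} |B_y|^l \le M^{l-1} \sum_y |B_y| = \tfrac12 (1+\eps)^{k(l-1)} \d^{kl} N^{l+k},
\]
which contradicts the previous bound since $\tfrac34 (1+\eps)^k > \tfrac12$. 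This pigeonhole is exactly what produces the threshold $M$ quoted in the lemma.

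Finally, choosing $y \in Y$ with $|B_y| > M$ and setting $B := B_y$, the diagonal shift-invariance of $S$ inherited from \eqref{f:E^k_l_symmetries}, together with the substitution $x_i \mapsto x_i - z$, yields
\[
\sum_{|x|=l} S(x) \Cf_l (B)(x) = \sum_{z \in \Gr} |S \cap B^l| = N\,|S \cap B^l| \ge (1 - 2\eta) N|B|^l,
\]
which after dividing by $N$ is \eqref{f:BSzG_k}. I expect the main obstacle to be the localisation estimate in paragraph two: one must show that the gain $(1+\eps)^{lk}/(1+\eps/4)^{l(k-1)}$ beats $2/\eta$ from the modest hypothesis $kl \ge 4\eps_*^{-1}\L(\eta)$, which forces careful bookkeeping of the competing factors $(1+\eps)$ and $(1+\eps/4)$ in both the small-$\eps$ and large-$\eps$ regimes.
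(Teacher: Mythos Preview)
Your proof is correct and follows essentially the same route as the paper: both set $B=A_z$ for a suitable $z\in\Gr^k$, use the shift--invariance of $S$ to rewrite $\sum_{x\in S}\Cf_l^k(A)(x)$ as $\sum_z |S\cap A_z^l|$, localise the energy onto $S$ via the threshold defining $S$, and then combine a size cutoff (the $M^{l-1}\sum_z|A_z|$ bound) with an averaging argument to extract a good $z$. The only cosmetic differences are that you pull out $k-1$ sup factors rather than $k$ in the localisation (giving a slightly sharper intermediate bound) and that you split the averaging into Markov on $r_y$ followed by a separate pigeonhole, whereas the paper packages both into a single weighted inequality with an $\eta^{-1}$ penalty term.
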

\begin{proof} 
    For {\it any} set $S \subseteq \Gr^l$ with the property $S(x + \mD_l(t)) = S(x)$, $t\in \Gr$, $x\in \Gr^l$,  we have 
\[
    \sum_{|x|=l} S(x) \Cf^k_l (A) (x)
    =
    \sum_{|x|=l} S(x) \sum_{|z|=k} A^k (z+\mD_k (x_1)) \dots A^k (z+\mD_k (x_l))
\]
\begin{equation}\label{tmp:28.02_0}
    =
    \sum_{|z|=k}\, \sum_{|x|=l} S(x) A_{z} (x_1) \dots A_z (x_l)
    =
    N^{-1} \sum_{|z|=k}\, \sum_{|x|=l} S(x) \Cf_l (A_z) (x) \,,
\end{equation} 
where we have made the change of the variables $x_j \to x_j+t$ in the last formula. 
Clearly, we have from identity \eqref{f:E^k_l_symmetries} and definition \eqref{def:BSzG_k} that 
    $S(x + \mD_l(t)) = S(x)$ for and $t\in \Gr$ and $x\in \Gr^l$ and thus the argument above can be applied for the set $S$ as well. 
%    On the other hand, 
    Thus using the definition of the set $S$, as well as the conditions \eqref{cond:BSzG_k} and  $kl \ge 4\eps^{-1}_* \log (4/\eta)$, we get 
\begin{equation}\label{tmp:28.02_2*}
    \sum_{x\notin S}  \Cf^k_l (A) (x) \le (1+\eps/4)^{lk} \d^{lk} N^{k+l}
    \le 
    2^{-2} \eta (1+\eps)^{lk} \d^{lk} N^{k+l} 
    \le 
    2^{-2} \eta \E^k_l (A) \,.
\end{equation} 
    Now define  the set 
\[
    \Omega = \left\{ |z|=k ~:~ |A_z| \ge 2^{-1/(l-1)} (1+\eps)^k  \d^k N \right\} \,.
\]
    By the definition of the set $\Omega$, we derive 
\begin{equation}\label{tmp:28.02_2**}
\sum_{z\notin \Omega} |A_z|^l \le 
    (\max_{z\notin \Omega} |A_z|)^{l-1} |A|^k N \le 
    2^{-1} (1+\eps)^{lk} \d^{lk} N^{l+k} \le 
    2^{-1} \E^k_l (A)  \,. 
\end{equation} 
In view of 
%inequalities 
bounds 
\eqref{tmp:28.02_2*}, \eqref{tmp:28.02_2**} one has 
\begin{equation}\label{f:prob}
    N^{-1} \sum_{z\in \Omega} \left( \sum_{x\in S}  \Cf_l (A_z) (x) - \eta^{-1} \sum_{x\notin S}  \Cf_l (A_z) (x) \right) \ge 2^{-1} \sum_{|z|=k} |A_z|^l - 2^{-1} \E^k_l (A) = 0 \,.
\end{equation} 
    Hence there is $z\in \Omega$ such that inequality \eqref{f:BSzG_k} holds for $B=A_z$ and $|B| > 2^{-1/(l-1)} (1+\eps)^k  \d^k N$ as required. 
$\hfill\Box$
\end{proof}

\bigskip 

Now we need an analogue of the almost periodicity result \cite{CS}  (also, see \cite{sanders2011roth}, \cite{sanders2012bogolyubov}, \cite{sanders2013structure} and, especially, 
\cite[Theorem 3.2]{SS4}) for the higher convolutions. 
This theme is rather well--known and thus we give just a scheme of the proof emphasizing the necessary distinctions we must make.  
%%(also, some details can be found in Lemma \ref{l:CS_new} from the appendix where even more difficult situation is considered).
%%Again, we prove our results for $\Cf_{|x|} (x)$ (actually, for $\Cf'_{|x|+1} (x)$) 
%for convenience 
%%but the same is true for $\Cf_{|x||z|} (x\oplus z)$ or for $\Cf_{|y||z|} (y\oplus z)$. 
For the convolution  $\Cf_{|x||z|} (x\oplus z)$ a similar result takes place, see Lemma  \ref{l:CS_new} from the appendix.

\begin{lemma}
    Let $\Gr=\F_p^n$, $l$ be an integer and $\epsilon  \in (0,1]$ be a real parameter.
    Also, let $B\subseteq \Gr$ be a set, $|B|=\beta N$, and $f:\Gr^l \to [-1,1]$ be a function. 
    Then there is a subspace $V\le \Gr$ with 
\begin{equation}\label{f:Sanders_V_codim}
        \mathrm{codim} V \ll \epsilon^{-2} l \L^2 (\beta) \L^2 (\epsilon \beta^l)
\end{equation} 
    and such that 
\begin{equation}\label{f:Sanders_V}
    \left| \sum_{|x|=l} f(x) (B^l \circ \mD_l (B * \mu_V)) (x) - \sum_{|x|=l} f(x) (B^l \circ \mD_l (B)) (x)  \right| 
    \le \epsilon |B|^{l+1} \,.
\end{equation} 
\begin{comment} 
    A similar result is true for 
    %the energy $\mathcal{E}^k_{s,t}$, 
    $\Cf_{st} (B) (x\oplus y)$, 
    namely, for any $s,t\ge 3$, $l=st-1$, we get 
\[
%    \left| 
    |\sum_{|x|=s-1}\, \sum_{|y|=t-1} f(\bar{x}\oplus \bar{y}) (B^l \circ \mD_l (B * \mu_V)) (\bar{x}\oplus \bar{y}) -
\]
\begin{equation}\label{f:Sanders_V+}
    \sum_{|x|=s-1}\, \sum_{|y|=t-1} f(\bar{x}\oplus \bar{y}) (B^l \circ \mD_l (B)) (\bar{x}\oplus \bar{y})
    |
 %   \right| 
    \le \eps \bar{\E}^t_{s} (B) \,.
\end{equation} 
\end{comment} 
\label{l:Sanders_V}
\end{lemma}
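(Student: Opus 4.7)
\emph{Plan.} Denote the quantity inside the absolute value in \eqref{f:Sanders_V} by $D$. Using $(B^l \circ \mD_l(g))(x_1,\dots,x_l) = \sum_z g(z) \prod_{j=1}^l B(z-x_j)$ and the change of variables $y_j = z - x_j$, set
\[
T(z) := \sum_{y_1,\dots,y_l} f(z-y_1,\dots,z-y_l) \prod_{j=1}^l B(y_j),
\]
so that
\[
D = \sum_z \bigl((B*\mu_V)(z) - B(z)\bigr)\, T(z),
\]
with $|T(z)| \le |B|^l$ since $\|f\|_\infty \le 1$. Expanding $(B*\mu_V)(z) = \sum_v \mu_V(v) B(z-v)$ and shifting $z \mapsto z+v$ in the first sum yields
\[
D = \sum_v \mu_V(v) \sum_{z,y_1,\dots,y_l} B(z)\, f(z-y_1,\dots,z-y_l)\, \Bigl(\prod_{j=1}^l B(y_j+v) - \prod_{j=1}^l B(y_j)\Bigr).
\]

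Telescoping the difference of products decomposes $D = \sum_{j=1}^l D_j$, where
\[
D_j = \sum_v \mu_V(v) \sum_{z,y_1,\dots,y_l} B(z)\, f(z-y_1,\dots,z-y_l)\, \Bigl(\prod_{i<j} B(y_i+v)\prod_{i>j} B(y_i)\Bigr)\, \bigl(B(y_j+v) - B(y_j)\bigr).
\]
For each $j$ the sums over $z$ and the $y_i$ with $i \neq j$ produce an auxiliary function $\Psi_{j,v}(y_j)$ with $\|\Psi_{j,v}\|_1 \le |B|^l$ uniformly in $v$, so the task reduces to selecting $V$ so that, averaged over $v \in V$, each translate $B(\cdot + v)$ is $L^1$-close to $B$ when tested against $\Psi_{j,v}$. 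This is exactly the $\F_p^n$-form of the Croot--Sisask almost periodicity lemma \cite{CS} as used in \cite{sanders2012bogolyubov,SS4,bloom2023kelley,kelley2023strong}.

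The plan is to apply Croot--Sisask with $L^p$-parameter $p$ of order $\L(\epsilon \beta^l)$ and approximation $\epsilon/l$: this produces a set $T_0 \subseteq \Gr$ of density at least $\beta/2$ such that for all $t \in T_0 - T_0$ the relevant pairings shift by at most $\epsilon |B|^{l+1}/l$. A standard Bogolyubov/Chang step in $\F_p^n$ then locates a subspace $V \subseteq kT_0 - kT_0$ of codimension $\ll \epsilon^{-2} l\, \L^2(\beta)\, \L^2(\epsilon \beta^l)$, provided the Croot--Sisask lemma is invoked a \emph{single} time on the vector-valued object indexed by $j \in [l]$ (exploiting that all $l$ factors share the same shift $v$, by the diagonal structure of $\mD_l$) rather than $l$ separate times. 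With such $V$, each $|D_j| \le \epsilon |B|^{l+1}/l$ and summing over $j$ gives $|D| \le \epsilon |B|^{l+1}$ as required.

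The main obstacle is precisely this simultaneous handling of all $l$ translated factors: a per-factor application of Croot--Sisask followed by intersection of the $l$ resulting subspaces would cost a spurious extra factor of $l$ in the codimension (one factor from splitting the $\epsilon$-budget into $l$ pieces, another from summing $l$ codimensions), giving $l^2$ instead of the linear $l$ appearing in \eqref{f:Sanders_V_codim}. Once the joint version is set up, the two logarithmic factors $\L(\beta)$ and $\L(\epsilon \beta^l)$ enter in the familiar way from the Bogolyubov step and from the $L^p$-parameter chosen to match the error tolerance.
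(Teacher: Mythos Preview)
Your outline has the right high--level ingredients (Croot--Sisask almost periodicity followed by a Chang/Bogolyubov step), but the telescoping route has a real gap. After telescoping, you test $B(\cdot+v)-B(\cdot)$ against $\Psi_{j,v}$, and $\Psi_{j,v}$ itself depends on $v$ through the factors $\prod_{i<j}B(y_i+v)$. Standard Croot--Sisask furnishes a set $T_0$ such that $B(\cdot+t)-B(\cdot)$ is small in $L^q$ for $t\in T_0-T_0$, but to turn that into a bound on $\langle B(\cdot+v)-B(\cdot),\Psi_{j,v}\rangle$ you need a test function that is fixed, or at least controlled uniformly in $v$. Your fix --- a ``joint'' Croot--Sisask on a vector--valued object indexed by $j$ --- is precisely where the content of the lemma lies, and you have not said what that object is or why one application yields only a linear cost in $l$. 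Two smaller issues: the $L^q$ exponent needed here is $q\asymp l\,\L(\beta)$, not $\L(\epsilon\beta^l)$ (the latter is the \emph{iteration} depth $k$ for $\mu_T^{(k)}$, a different parameter); and $T_0$ does not have density $\ge\beta/2$ --- it carries the full exponential loss $\exp(-O(\epsilon^{-2}qk^2\L(\beta)))$, which is what eventually becomes the codimension bound.

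The paper sidesteps the telescoping entirely. It observes that $|\mD_l(B)+\mD_l(\Gr)|\le\beta^{-1}|\mD_l(B)|$, so one may apply Croot--Sisask \emph{once} to the convolution $f\circ\mD_l(B)$ in $\Gr^l$, with shifts taken along the diagonal $\mD_l(\Gr)$: there is $T\subseteq\Gr$ with $|T|\ge|B|\exp(-O(\epsilon^{-2}qk^2\L(\beta)))$ such that $\|(f\circ\mD_l(B))(\cdot+\mD_l(t))-(f\circ\mD_l(B))\|_q\le(\epsilon/4)|B|N^{l/q}$ for all $t\in kT$. H\"older against $B^l$ then gives, for each such $t$, an error $\tfrac{\epsilon}{4}\beta^{-l/q}|B|^{l+1}$; the choice $q\asymp l\L(\beta)$ makes $\beta^{-l/q}=O(1)$, and this is exactly the source of the single factor $l$ in \eqref{f:Sanders_V_codim}. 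Averaging over $t$ replaces $B$ by $B*\mu_T^{(k)}$, and the passage from $\mu_T^{(k)}$ to $\mu_V$ is the usual Fourier/Chang argument with $k\asymp\L(\epsilon\beta^l)$, which supplies the remaining logarithms. In short, the ``diagonal structure of $\mD_l$'' you point to is exploited not by telescoping but by treating $\mD_l(B)$ itself as the set being convolved and running the standard one--dimensional machinery on its diagonal shifts.
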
 
\begin{proof}
    We begin with a rather general argument which takes place in any abelian group $\Gr$. 
    Let $k\ge 2$ be an integer parameter and $q\ge 2$ be a real parameter.  
    Applying the Croot--Sisask lemma \cite{CS}, \cite{sanders2012bogolyubov} (clearly, one has  $|\mD_l (B) + \mD_l (\Gr)| \le \beta^{-1} |\mD_l (B)|$, so the set $\mD_l (B)$ has the small doubling), 
%    one has 
    we find a set $T\subseteq \Gr$, $|T|\ge |B| \exp(-O(\epsilon^{-2} qk^2 \log (1/\beta)))$    and such that for any $t\in kT$ the following holds 
\[
    \sum_{|x|=l} \left|  (f \circ \mD_l (B)) (x+\mD_l (t)) - 
    \sum_{|x|=l} (f \circ \mD_l (B)) (x) \right|^q 
\]
\begin{equation}\label{tmp:28.02_5}
        \le 
            \left(\frac{\epsilon}{4} \right)^q \| f\|_q^q |B|^q 
                \le      
                \left(\frac{\epsilon}{4} \right)^q |B|^q N^l \,.
\end{equation}
    Fixing $t\in kT$ and using the H\"older inequality, combining with estimate  \eqref{tmp:28.02_5}, we get 
\[
    \left| \sum_{|x|=l} f(x) (B^l \circ \mD_l (B)) (x+\mD_l (t)) 
    - \sum_{|x|=l} f(x) (B^l \circ \mD_l (B)) (x) \right| 
        \le 
        \frac{\epsilon}{4} |B| N^{l/q} |B|^{l(1-1/q)}
\]
\begin{equation}\label{tmp:28.02_5.5}
    =
        \frac{\epsilon}{4} \beta^{-l/q} |B|^{l+1}
        \le 
        \frac{\epsilon}{2} |B|^{l+1} \,,
\end{equation}
    where we have taken $q=C l \log (1/\beta)$ for a sufficiently large constant $C>0$. 
    It follows that 
\begin{equation}\label{tmp:28.02_6}
    \left| \sum_{|x|=l} f(x) (B^l \circ \mD_l (B * \mu^{(k)}_T)) (x) - \sum_{|x|=l} f(x) (B^l \circ \mD_l (B)) (x) \right| \le 2^{-1} \epsilon |B|^{l+1} \,. 
\end{equation}
    Let us analyze the sum $\sigma:= |\sum_{|x|=l} f(x) (B^l \circ \mD_l (B * \mu^{(k)}_T)) (x)|$ from \eqref{tmp:28.02_6}. 
    Clearly, $\FF{\mu}_T (r_1,\dots, r_l) = |T|^{-1} \FF{T}(r_1+\dots+r_l)$ and thus 
\begin{equation}\label{tmp:28.02_7}
    \sigma \le \frac{|B|}{|T|^k N^l} \sum_{z} |\FF{T} (z)|^k
    \sum_{r_1+\dots+r_l=z} |\FF{f} (r_1,\dots,r_l)| |\FF{B}(r_1)| \dots |\FF{B}(r_l)| \,.
\end{equation}
    As usual let us estimate the last sum over $z\in \Spec_c (T)$, where  $c\in (0,1]$ is a parameter  and  over $z\notin \Spec_c (T)$, 
    where we have defined 
\[
    \Spec_c (T) = \{ z\in \Gr ~:~ |\FF{T} (z)| \ge c |T|\} \,. 
\]
By the definition of the set $\Spec_c (T)$, the H\"older inequality and the Parseval identity, we have 
\[
    \sigma_1 :=
\frac{|B|}{|T|^k N^l} \sum_{z\notin \Spec_c (T)} |\FF{T} (z)|^k
    \sum_{r_1+\dots+r_l=z} |\FF{f} (r_1,\dots,r_l)| |\FF{B}(r_1)| \dots |\FF{B}(r_l)| 
\]
\[ 
    \le 
    \frac{c^k |B|}{N^l} \sum_{r_2,\dots,r_l} 
    |\FF{B}(r_2)| \dots |\FF{B}(r_l)| 
    \sum_{z} |\FF{f} (z-r_2+\dots+r_l,\dots,r_l)| |\FF{B} (z-r_2+\dots+r_l)|
\]
\[
    \le 
    \frac{c^k |B|^{3/2}}{N^{l-1}} \sum_{r_2,\dots,r_l} 
    |\FF{B}(r_2)| \dots |\FF{B}(r_l)| \left( \sum_a |\FF{f}_a (r_2,\dots,r_l)|^2 \right)^{1/2} \,,
\]
    where $f_a(x_2,\dots,x_l) = f(a,x_2,\dots,x_l)$. 
    Using the H\"older inequality and the Parseval formula one more time, we derive 
\[
    \sigma_1 \le \frac{c^k |B|^{(l+2)/2}}{N^{(l-1)/2}}
    \left( \sum_{r_2,\dots,r_l} \sum_a |\FF{f}_a (r_2,\dots,r_l)|^2  \right)^{1/2}
    \le 
    c^k |B|^{(l+2)/2} N^{l/2} \le 2^{-1}\epsilon |B|^{l+1} \,.
\]
    Here we have taken $c = 1/2$ and $k = \lceil 2\L (\epsilon \beta^l) \rceil$, say. 
    For the sum over $z\in \Spec_{c} (T)$ we use the Chang lemma \cite{chang2002polynomial} about the dimension of the spectrum and find a subspace $V$ such that \eqref{f:Sanders_V} takes place and 
\[
    \mathrm{codim} V \ll \log (N/|T|) \ll \epsilon^{-2} l \L^2 (\beta) k^2
    \ll
     \epsilon^{-2} l \L^2 (\beta) \L^2 (\epsilon \beta^l) \,,
\]
%as required. 
see details in \cite{CS}, \cite{sanders2012bogolyubov}, \cite{sanders2013structure} or in \cite[Section 5]{sh_str_survey}. 
This completes the proof. 
$\hfill\Box$
\end{proof}

\section{Some generalizations of Kelley--Meka results}

Using the density increment Kelley--Meka \cite{kelley2023strong} 
(or just repeat the calculations of the previous section, combining forthcoming  Proposition \ref{p:increment_Ekl} in the case $l=2$) 
obtained the following result. 

\begin{theorem}
    Let $\Gr=\F_p^n$, $A\subseteq \Gr$ be a set, $|A|=\d N$, and $\eps>0$ be a parameter.
    Then there is a subspace $V\subseteq \Gr$ and $x\in \Gr$ such that $A\cap (V+x)$ is $\eps$--uniform relatively to $\E^k_2$, $\mu_{V+x} (A) \ge \d$, and 
\begin{equation}\label{f:uniform_KM}
    \codim V \ll \eps^{-14} k^4 \L^3 (\d) \L^{2} (\eps \d) \cdot \L^{4} (\eps) \,.
    %7->14
\end{equation}
\label{t:uniform_KM}
\end{theorem}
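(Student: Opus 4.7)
The plan is a density-increment argument in the spirit of Kelley--Meka, driven by the three structural lemmas of Section \ref{sec:E^k_l}. I build a nested sequence of affine subspaces $\Gr = V_0 \supseteq V_1+y_1 \supseteq V_2+y_2 \supseteq \dots$ on which the successive restrictions of $A$ have non-decreasing densities $\delta = \delta_0 \le \delta_1 \le \dots$, and I stop as soon as the current restriction is $\eps$-uniform relatively to $\E^k_2$. After replacing $k$ by the next odd integer at least $\max(k,5)$ and invoking the H\"older monotonicity \eqref{f:Holder_Ekl}, I may assume $k$ is odd.

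At a stage where $A' = A \cap (V+y)$ has density $\delta' \gs \delta$ in $V+y$ and violates the $\eps$-uniformity condition, I apply Lemma \ref{l:e_to_d} with $l=2$. The only hypothesis to check is $\frac{\eps}{16}$-uniformity of $A'$ relatively to $\E^{k_*}_{1}$, which is vacuous since $\E^{k_*}_1(f_{A'}) \equiv 0$ for a balanced function; hence there exists an even $k_1 \ls k_* = O(k\eps^{-1}\L(\eps))$ with
\[
    \E^{k_1}_2(A') \gs (1+c_0\eps)^{2k_1} (\delta')^{2k_1} |V|^{k_1+2}.
\]
Feeding this into Lemma \ref{l:BSzG_k} with a fixed $\eta\in (0,1/2)$, I obtain the diagonally-invariant level set $S = \{|x|=2 : \Cf_2(A')(x) \gs (1+\eps/4)^2(\delta')^2 |V|\}$ together with a slice $B = A'_z$ of size $|B| \gs 2^{-1}(1+c_0\eps)^{k_1}(\delta')^{k_1}|V|$ on which $\Cf_2(B)$ is overwhelmingly concentrated on $S$.

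Now I invoke Lemma \ref{l:Sanders_V} with $l=2$, the set $B$, and the indicator $f = S$, with smoothing parameter $\epsilon \asymp \eps$. This yields a subspace $V' \ls V$ of codimension
\[
    \codim_V V' \ll \eps^{-2} \L^2(\beta) \L^2(\eps\beta^2), \qquad \beta = |B|/|V|,
\]
such that replacing one copy of $B$ in the $S$-weighted average by $B * \mu_{V'}$ changes it by at most $\eps |B|^3$. Unwinding the definitions of $S$ and $B$, and transporting the smoothed excess back to $A'$ via the symmetry \eqref{f:E^k_l_symmetries} and the duality $\sum_z \Cf_l(A_z) = \E^k_l(A)$ from the proof of Lemma \ref{l:BSzG_k}, one exhibits a coset $V'+y'$ with $\mu_{V'+y'}(A') \gs (1+c_1\eps)\delta'$.

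Since density is bounded by $1$ and multiplies by $1+c_1\eps$ per iteration, the process halts after $O(\eps^{-1}\L(\delta))$ stages. Summing the per-step codimension increments, and using $\beta \gs \delta^{k_1}$ so that $\L(\beta), \L(\eps\beta^2) \ll k\eps^{-1}\L(\eps)\L(\eps\delta)$, telescopes to \eqref{f:uniform_KM}. The main obstacle is the transport step in the previous paragraph: converting the $L_\infty$ excess of $\Cf_2(A')$ on $S$ into a genuine coset-density increment for $A'$ itself requires carefully propagating the Sanders-type smoothing through the $A_z$-slice duality and a dyadic pigeonhole on the level sets of $\Cf_2(A')$, as in \cite{bloom2023kelley}; this is the technical heart of the Kelley--Meka strategy and is what ultimately governs the $\eps^{-14}$ exponent in \eqref{f:uniform_KM}.
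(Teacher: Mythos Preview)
Your overall density--increment strategy is correct and matches the paper's approach (Theorem \ref{t:uniform_KM} is obtained there by specializing Proposition \ref{p:increment_Ekl} to $l=2$ and iterating). However, you misidentify where the difficulty lies, and this leads you to leave the key step incomplete.

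The ``transport step'' you describe as the main obstacle --- converting the smoothed concentration of $\Cf_2(B)$ on $S$ into a coset density increment for $A'$ --- is in fact short and direct, with no dyadic pigeonhole or slice duality required. Once Lemma \ref{l:Sanders_V} gives
\[
    \sum_{|x|=1} \bar{S}(x)\,(B \circ (B*\mu_{V'}))(x) \ge (1-3\eta)|B|^2 \,,
\]
one simply uses that on $\bar{S}$ the convolution $(A'\circ A')$ exceeds $(1+\eps/4)^2(\delta')^2|V|$, whence
\[
    (1-3\eta)(1+\eps/4)^2 (\delta')^2 |B|^2 |V| \le \sum_{\alpha} (A'*B*\mu_{V'})(\alpha)\,(A'\circ B)(\alpha)
    \le |B|\,\|A'*\mu_{V'}\|_\infty \cdot |A'|\,|B| \,,
\]
since $\sum_\alpha (A'\circ B)(\alpha) = |A'|\,|B|$ trivially. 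This yields $\|A'*\mu_{V'}\|_\infty \ge (1+\eps/8)\delta'$ directly. The point is that for $l=2$ the factor $\sum_\alpha (A\circ B)^{l-1}(\alpha)$ needs no uniformity input at all; this is why the $\E^{k_*}_{l-1}$ hypothesis in Proposition \ref{p:increment_Ekl} becomes vacuous here, just as your observation about $\E^{k_*}_1(f_{A'})=0$ makes Lemma \ref{l:e_to_d} unconditional.

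A few quantitative slips: for $l=2$ Lemma \ref{l:e_to_d} gives $k_1 \le k_* = O(k\eps^{-2}\L(\eps))$, not $O(k\eps^{-1}\L(\eps))$; the parameter $\eta$ in Lemma \ref{l:BSzG_k} must be taken $\asymp \eps$, not fixed, or else $(1-3\eta)(1+\eps/4)<1$ for small $\eps$; and Lemma \ref{l:Sanders_V} is applied with $l-1=1$, not $l=2$. With these corrections the per--step codimension is $\ll \eps^{-12}k^4\L^4(\eps)\L^2(\delta)\L^2(\eps\delta)$, the density gain per step is $1+c\eps^2$, and the iteration count $O(\eps^{-2}\L(\delta))$ then produces exactly \eqref{f:uniform_KM}.
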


\begin{remark}
    Actually, we formulate Theorem \ref{t:uniform_KM} in the form of Bloom--Sisask \cite{bloom2023kelley}. Kelley--Meka \cite{kelley2023strong} obtained this result without $\L^{4} (\eps)$ in \eqref{f:uniform_KM}. 
\end{remark}

We generalize Theorem \ref{t:uniform_KM} for the higher energies $\E^k_l$.

\begin{theorem}
    Let $\Gr=\F_p^n$, $A\subseteq \Gr$ be a set, $|A|=\d N$, and $\eps \in (0,1]$ be a parameter.
    Then there is a subspace $V\subseteq \Gr$ and $x\in \Gr$ such that $A\cap (V+x)$ is $\eps$--uniform relatively to $\E^k_l$, $\mu_{V+x}(A) \ge \d$ and 
\begin{equation}\label{f:uniform_KM_k}
    \codim V \ll 
%    \eps^{-3l^2} k^4  l^{9l} \L^{4l} (\eps) \L^{5l} (\d) 
\eps^{-28l^l} (8l)^{28l^l} k^4 \L^{4l} (\eps) \L^{5l} (\d)
    \,.
\end{equation}
\label{t:uniform_KM_k}
\end{theorem}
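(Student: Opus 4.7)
The plan is to proceed by induction on $l$, with base case $l=2$ given by the Kelley--Meka Theorem \ref{t:uniform_KM}. For the inductive step, fix $l\ge 3$ and write $M_l(k,\eps,\d)$ for the codimension bound we are producing. Set $\zeta := \eps\eps_*^{l-1}/(8l)$ and $k_* := O(kl\eps_*^{-l}\L(\eps_*))$, the parameters prescribed by Lemma \ref{l:e_to_d}. Following the Kelley--Meka/Bloom--Sisask template, I would run a density--increment iteration whose current state consists of an ambient subspace $V\le\Gr$ and a translate of $A$ in $V$ of relative density $\d'\ge\d$.

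In each round I would first invoke the inductive hypothesis at level $l-1$ with parameters $(k_*,\zeta,\d')$ to pass, at a cost of $M_{l-1}(k_*,\zeta,\d')$ codimensions, to a subspace on which $A$ (restricted to a suitable coset) is $\zeta$--uniform relatively to $\E^{k_*}_{l-1}$. If $A$ is simultaneously $\eps$--uniform relatively to $\E^k_l$, halt. Otherwise $\|f_A\|^{kl}_{\E^k_l}>\eps^{kl}(\d')^{kl}|V|^{k+l}$, and Lemma \ref{l:e_to_d} produces an even $k_1\le k_*$ with $\E^{k_1}_l(A)\ge(1+\zeta)^{lk_1}(\d')^{lk_1}|V|^{l+k_1}$. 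Feeding this into Lemma \ref{l:BSzG_k} (with its $\eps$ taken as $\zeta$ and $\eta$ an absolute constant) extracts a set $B\subseteq V$ of density $\beta\gtrsim(\d')^{k_1}$ whose convolution $B^l\circ\mD_l(B)$ concentrates on the popular $l$--tuples $S=\{x:\Cf_l(A)(x)\ge(1+\zeta/4)^l(\d')^l|V|\}$. Finally, Lemma \ref{l:Sanders_V} applied with $f=S$ produces a subspace $V''\le V$ of relative codimension $\ll\eps^{-2}l\L^2(\beta)\L^2(\eps\beta^l)$, and standard unpacking (as in \cite[\S3]{bloom2023kelley}) converts the almost--periodicity identity into a coset of $V''$ on which the density of $A$ is at least $(1+\zeta/8)\d'$.

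The density can at most $O(\zeta^{-1}\L(\d))\ll l\eps^{-l}\L(\d)$ such multiplicative increments before saturating, so the iteration terminates within that many rounds. Accumulating the per--round codimension costs yields the recurrence
\begin{equation*}
M_l(k,\eps,\d)\ll l\eps^{-l}\L(\d)\Bigl(\eps^{-2-4l}l^7 k^4\L^4(\eps)\L^4(\d)+M_{l-1}\bigl(kl\eps^{-l}\L(\eps),\tfrac{\eps^l}{8l},\d\bigr)\Bigr),
\end{equation*}
which, unrolled from level $l$ down to the base case $l=2$ (where Theorem \ref{t:uniform_KM} contributes a polynomial factor), produces the stated bound $\eps^{-28l^l}(8l)^{28l^l}k^4\L^{4l}(\eps)\L^{5l}(\d)$.

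The principal technical obstacle is the parameter blow--up in the inductive descent: each level substitutes $\eps\mapsto\eps^l/(8l)$ and $k\mapsto kl\eps^{-l}\L(\eps)$, so the exponent of $1/\eps$ accumulates like $l(l-1)\cdots 2$, which must be absorbed into the $\eps^{-28l^l}$ factor together with the $(8l)^{28l^l}$ book-keeping losses from the repeated $1/(8l)$ in $\zeta$. Care is required to ensure that each invocation of the inductive hypothesis at the start of a round does not destabilise the monotone density progress being pursued at level $l$, and to verify that the Sanders bound with $\beta\sim(\d')^{k_1}$, $k_1\le k_*$ remains polynomially controlled in $k$, $\L(\d)$, $\L(\eps)$ after all substitutions are made.
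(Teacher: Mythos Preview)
Your proposal is essentially the same as the paper's proof: induction on $l$ with base case $l=2$ via Theorem \ref{t:uniform_KM}, and at level $l\ge 3$ a density--increment loop where each round first invokes the inductive hypothesis at level $l-1$, then applies Lemma \ref{l:e_to_d}, Lemma \ref{l:BSzG_k}, and Lemma \ref{l:Sanders_V} (which the paper packages as Proposition \ref{p:increment_Ekl}), yielding the recurrence you state.

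One point to tighten: your ``standard unpacking (as in \cite[\S3]{bloom2023kelley})'' is not quite the $l=2$ argument. After Sanders one arrives at
\[
(1-3\eta)(1+\zeta/4)^l(\d')^l|B|^lN \le |B|\,\|A*\mu_V\|_\infty \sum_\a (A\circ B)^{l-1}(\a),
\]
and for $l\ge 3$ the sum $\sum_\a (A\circ B)^{l-1}(\a)$ is not trivially $|A|^{l-1}|B|^{l-1}/N^{l-2}$; one needs Corollary \ref{c:A_circ_B}, which is a \emph{second} use of the $\E^{k_*}_{l-1}$--uniformity, now with $k_*\sim l\log(1/\beta)\sim lk_1\L(\d')$. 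Since $k_1\le O(kl\eps^{-l}\L(\eps))$, this forces $k_*=O(kl^2\eps^{-l}\L(\eps)\L(\d))$, i.e.\ an extra factor $l\L(\d)$ beyond what you took from Lemma \ref{l:e_to_d} alone. This is precisely the choice the paper makes and does not alter the shape of the final bound.
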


Now we are ready to obtain our driving result about the density increment. 
As always we will apply Proposition \ref{p:increment_Ekl} in an iterative way  and we see that estimate \eqref{cond:increment_2_Ekl}  allows us to do it in at most $O(\eps^{-1} \L(\d))$ times. 

\begin{proposition}
    Let $\Gr = \F_p^n$, $A\subseteq \Gr$, $|A|=\d N$, $\eps>0$ be a real number and $k,l \ge 2$ be positive integers, $kl \gg \eps^{-1} \L(\eps)$. 
    Suppose that 
\begin{equation}\label{cond:increment_Ekl}
    \E^k_{l} (A) \ge (1+\eps)^{kl} \d^{kl} N^{k+l} \,,
\end{equation} 
    and that $A$ is $\eps/5$--uniform relatively to $\E^{k_*}_{l-1}$ for an even  $k_* = O(kl \L (\delta))$.
    Then there is a subspace $V \subseteq \Gr$ such that 
\begin{equation}\label{cond:increment_1_Ekl}
    \mathrm{codim} V \ll \eps^{-2} l^3 k^4 \L^2 (\d) \L^2 (\eps \d)
    \,,
\end{equation} 
    and for a certain $x\in \Gr$ one has 
\begin{equation}\label{cond:increment_2_Ekl} 
    |A\cap (V+x)| \ge (1+\eps/8) \d |V| \,.
\end{equation} 
\label{p:increment_Ekl}
\end{proposition}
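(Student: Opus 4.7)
The plan is to combine the three preceding results---Lemma~\ref{l:BSzG_k} (a BSG-type extraction for $\E^k_l$), Lemma~\ref{l:Sanders_V} (almost periodicity for higher convolutions), and a H\"older/pigeonhole argument---to transfer the coset structure from an extracted set $B$ into a genuine density increment for $A$.

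First, I would apply Lemma~\ref{l:BSzG_k} with $\eta = \eps/100$; the hypothesis $kl \gg \eps^{-1}\L(\eps)$ guarantees the required condition $kl \ge 4\eps^{-1}\L(\eta)$.  This produces the superlevel set $S = \{|x|=l : \Cf_l(A)(x) \ge (1+\eps/4)^l \delta^l N\}$ and a set $B = A_z \subseteq \Gr$ of density $\beta := |B|/N \ge 2^{-1/(l-1)}(1+\eps)^k \delta^k$ satisfying $N^{-1}\sum_x S(x)\Cf_l(B)(x) \ge (1-2\eta)|B|^l$; since $A_z(u) = \prod_i A(u+z_i)$, we also have $B + z_1 \subseteq A$.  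Next, I would iterate Lemma~\ref{l:Sanders_V} once per factor of $B$ inside the generalized convolution, with error parameter $\epsilon' = \eps/(200 l)$, and intersect the resulting $l$ subspaces to obtain a single $V \le \Gr$ with
\[
    \codim V \ll l \cdot (\epsilon')^{-2} l\,\L^2(\beta)\,\L^2(\epsilon'\beta^l) \ll \eps^{-2} l^3 k^4 \L^2(\delta)\L^2(\eps\delta),
\]
matching \eqref{cond:increment_1_Ekl} after using $\beta \ge c(1+\eps)^k \delta^k$ (which gives $\L(\beta) \ll k\L(\delta)$ and $\L(\epsilon'\beta^l) \ll \L(\eps) + l k \L(\delta)$).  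The smoothing then leaves us with
\[
    N^{-1}\sum_{|x|=l} S(x)\,\Cf_l(B*\mu_V)(x) \ge (1-O(\eps))|B|^l.
\]

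Finally, I would extract the density increment by dualisation.  The identity $\mu_V * \mu_V = \mu_V$ yields $\Cf_l(B*\mu_V) = \Cf_l(B) * \tilde\mu_V$ on $\Gr^l$, with $\tilde\mu_V = \mu_V^{\otimes l}$, and hence
\[
    \sum_y \Cf_l(B)(y)\,(S * \tilde\mu_V)(y) = \sum_x S(x)\,\Cf_l(B*\mu_V)(x) \ge (1-O(\eps))\|\Cf_l(B)\|_1.
\]
Since $S*\tilde\mu_V \le 1$ pointwise, the weighted expectation of $1 - S*\tilde\mu_V$ with respect to $\Cf_l(B)/\|\Cf_l(B)\|_1$ is $O(\eps)$, so Markov produces a $y$ with $\Cf_l(B)(y) > 0$ and $(S*\tilde\mu_V)(y) \ge 1 - O(\eps)$.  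For such a $y$, averaging $\Cf_l(A)$ over $V^l$-translates gives $\Cf_l(A*\mu_V)(y) \ge (1-O(\eps))(1+\eps/4)^l \delta^l N$, while the trivial H\"older bound $\Cf_l(A*\mu_V)(y) \le (\max_w (A*\mu_V)(w))^{l-1}\cdot \delta N$ converts this to $\max_w (A*\mu_V)(w) \ge (1+\eps/8)\delta$ for $\eps$ small, delivering the coset of \eqref{cond:increment_2_Ekl}.

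The main obstacle is Step~2: Lemma~\ref{l:Sanders_V} smooths only one copy of $B$ at a time, so iterating $l$ times inflates the codimension by a factor of $l$, and the replacement chain introduces mixed convolutions whose error terms must be absorbed via the $\eps/5$-uniformity hypothesis on $\E^{k_*}_{l-1}$ (through Lemma~\ref{l:dispersion} controlling the lower-order contributions $\Cf_{|T|}(f_A)$ for $|T| < l$).  The other delicate point is the final H\"older calibration in Step~3: the gap $(1+\eps/4)^{l/(l-1)}$ versus $(1+\eps/8)$ must be wide enough to swallow the combined $(1-O(\eps))$ losses accumulated in BSG, almost periodicity, and the Markov step, which dictates the precise choices $\eta = \eps/100$ and $\epsilon' = \eps/(200l)$ above.
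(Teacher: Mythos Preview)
Your route differs from the paper's.  The paper applies Lemma~\ref{l:Sanders_V} only \emph{once}, smoothing the single diagonal copy of $B$ to obtain $\sum_{|x|=l-1}\bar S(x)\,(B^{l-1}\circ\mD_{l-1}(B*\mu_V))(x)\ge(1-3\eta)|B|^l$.  The pointwise bound $\Cf_l(A)\ge(1+\eps/4)^l\d^lN$ on $S$ then converts this directly into
\[
\sum_\alpha(A*B*\mu_V)(\alpha)\,(A\circ B)^{l-1}(\alpha)\ \ge\ (1-3\eta)(1+\eps/4)^l\d^l|B|^lN,
\]
one bounds $(A*B*\mu_V)(\alpha)\le|B|\,\|A*\mu_V\|_\infty$, and Corollary~\ref{c:A_circ_B} --- this is exactly where the $\eps/5$--uniformity of $A$ with respect to $\E^{k_*}_{l-1}$ enters --- gives $\sum_\alpha(A\circ B)^{l-1}(\alpha)\le(1+\eps/4)^{l-1}\d^{l-1}|B|^{l-1}N$.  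Division yields $\|A*\mu_V\|_\infty\ge(1+\eps/8)\d$ immediately.

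Your Markov/dualisation scheme, smoothing all $l$ copies, is a legitimate alternative and --- interestingly --- does \emph{not} actually need the $\E^{k_*}_{l-1}$--uniformity hypothesis at all: the trivial bound $\Cf_l(A*\mu_V)(y)\le(\max_w(A*\mu_V)(w))^{l-1}\d N$ uses only $\sum(A*\mu_V)=\d N$.  Your invocation of that hypothesis via Lemma~\ref{l:dispersion} for ``lower--order $\Cf_{|T|}(f_A)$ terms'' is misplaced; no such terms appear in your argument.  Two points do need repair, though.  First, Lemma~\ref{l:Sanders_V} as stated has all outer factors equal to the same set $B$; after one smoothing your outer product becomes mixed (some $B$'s, some $B*\mu_{V_i}$'s), so you need the (easy) extension to $[0,1]$--valued factors of mass $|B|$.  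Second, ``intersecting the $l$ subspaces'' is not correct: Sanders only asserts existence of a good $V_j$ for each step, so the iteration yields $\Cf_l(B*\mu_{V_1},\dots,B*\mu_{V_l})$ with possibly distinct $V_j$, and your final H\"older step delivers the increment on \emph{some} $V_j$, not on $\bigcap V_j$.  With these fixes your codimension overshoots \eqref{cond:increment_1_Ekl} by a power of $l$ (coming from $(\eps')^{-2}\sim l^2\eps^{-2}$), which is harmless in the regime $l=O(1)$ but means your displayed arithmetic is off.
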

\begin{proof} 
%    By our assumption  $kl \gg \eps^{-1} \L(\eps)$. 
    %Hence 
    Applying Lemma \ref{l:BSzG_k} for the energy $\E^k_{l} (A)$ with the parameters $\epsilon = \eps$, $\eta = \eps/30$, we construct the set 
\[
    S = \{ |x|=l ~:~ \Cf_l (A) (x) \ge (1+\eps/4)^l \d^l N \}
\]
and such that for a certain set $B \subseteq \Gr$,  $|B| > 2^{-1/(l-1)} (1+\eps)^k \d^k N := \beta N$ the following holds 
\begin{equation}\label{tmp:01.03_1_Ekl} 
    N^{-1} \sum_{|x|=l} S (x) \Cf_l (B) (x) \ge (1-2\eta) |B|^l \,.
\end{equation} 
    We have $kl \ge 4\eps^{-1} \L (\eta)$ and thus Lemma \ref{l:BSzG_k} can be applied indeed. 
    Using formulae \eqref{f:E^k_l_symmetries} and making the required change of the variables (in \eqref{def:Cf} we put, consequently, $z\to z-x_1$) one can see that \eqref{tmp:01.03_1_Ekl} is equivalent to 
\begin{equation}\label{tmp:01.03_2_Ekl} 
    %N^{-2} 
    \sum_{|x|=l-1} \bar{S} (x) (B^{l-1} \circ \mD_{l-1} (B)) (x)  \ge (1-2\eta) |B|^l \,,
\end{equation} 
    where $\bar{S} \subseteq \Gr^{l-1}$ is a certain set which is constructed via the set $S$, see formulae \eqref{f:E^k_l_symmetries}, \eqref{f:E^k_l:C,C'}. %\eqref{f:Cf_to_Cf'+}. 
    Now we apply Lemma \ref{l:Sanders_V} with $f=\bar{S}$, $B=B$, $\epsilon   = \eta$, and $l=l-1$.
    By this result and inequality \eqref{tmp:01.03_2_Ekl} we find a subspace $V\subseteq \Gr$ such that 
    the co--dimension of $V$ is controlled 
    %thanks to  
    by estimate 
    \eqref{f:Sanders_V_codim} and 
\begin{equation}\label{tmp:01.03_3_Ekl} 
    \sum_{|x|=l-1} \bar{S} (x) (B^{l-1} \circ \mD_{l-1} (B * \mu_V)) (x)  \ge (1-3\eta) |B|^l \,.
\end{equation} 
    By the definition of the set $S$ (and hence $\bar{S}$), we have $\Cf_l (A) (x) \ge (1+\eps/4)^l \d^l N$
    for any $x \in \bar{S}$ and hence inequality \eqref{tmp:01.03_3_Ekl}  give us 
\[
    (1-3\eta) (1+\eps/4)^l \d^l |B|^l N \le \sum_{|x|=l-1} 
    (A^{l-1} \circ \mD_{l-1} (A)) (x)
    (B^{l-1} \circ \mD_{l-1} (B * \mu_V)) (x)
\]
\[
    =
%    \sum_{|x|=s-1}\, \sum_{|z|=k-1} \mD_{l-1} (\mu_V \circ A) (\bar{x}\oplus \bar{z}) (B^{l-1} \circ A^{l-1} \circ \mD_{l-1} (B)) (\bar{x}\oplus \bar{z}) \,.
    \sum_{\a} (A*B*\mu_V) (\a)  
    \sum_{|x|=l-1} A^{l-1} (x) B^{l-1} (x + \mD_{l-1} (\a)) 
    = \sum_{\a} (A*B*\mu_V) (\a) (A\circ B)^{l-1} (\a) 
\]
\begin{equation}\label{tmp:01.03_4_Ekl} 
\le 
    |B| \| A * \mu_V \|_\infty  \sum_{\a}  (A\circ B)^{l-1} (\a) 
\,.
\end{equation} 
    Now by our assumption $A$ is $\eps/5$--uniform relatively to $\E^{k_*}_{l-1}$ and a certain  even $k_* = O(kl \L (\delta))$.
    Using Corollary \ref{c:A_circ_B}, we derive
%    It follows that 
\[
    \| A * \mu_V \|_\infty \ge \d (1-3\eta) (1+\eps/4) \ge \d (1+\eps/8) \,.
\]
%    provided $l\ge 5/\eps$. 
%as required. 
    Finally, thanks to \eqref{f:Sanders_V_codim}, we get
\begin{equation}\label{tmp:codim}
    \codim V \ll \eps^{-2} l \L^2 (\beta) \L^2 (\eps \beta^l) 
    \ll 
     \eps^{-2} l^3 k^4 \L^2 (\d) \L^2 (\eps \d) \,.
\end{equation} 
This completes the proof. 
$\hfill\Box$
\end{proof}

\bp 

Now we  
%ready to prove 
can prove our new Theorem \ref{t:uniform_KM_k}.

As always 
the proof follows the density increment scheme and our aim is to construct a shift of a subspace $V_\eps (l,k)$ where the set $A$ is $\eps$--uniform relatively to $\E_l^k$. 
Also, to obtain Theorem \ref{t:uniform_KM_k} we use induction on parameter $l\ge 2$ and the first step of the induction for $l=2$ and an arbitrary $k$ one can use either  
Kelley--Meka  Theorem \ref{t:uniform_KM}
%Alternatively, we prefer follow 
or the arguments of 
%of the previous section, 
Section \ref{sec:E^k_l} (one can check that we do not need any uniformity conditions in this case),  
combining with  Proposition \ref{p:increment_Ekl}. 
%They give us the following bound for the co--dimension of the subspace where our set $A$ is uniform
%\begin{equation}\label{f:basis}
%    \codim V \ll 
%\end{equation} 
Now let $l\ge 3$ and suppose that the set $A$ is not $\eps$--uniform relatively to $\E^k_{l}$ for a certain $k$ because otherwise there is nothing to prove. 
Of course (see, e.g., inequality \eqref{f:Holder_Ekl}), one can take $k$ to be a sufficiently large number and we choose $k$ such that $k \gg l\eps^{-l} \L (\eps)$.  
Put $\eps_l=\frac{\eps^l}{8l}$.
We can freely assume that our set $A$ is $\eps_l/5$--uniform relatively to $\E^{k_*}_{l-1}$ with 
%$k_* =O(k l \eps^{-l} \L(\eps))$ 
$k_* = k_* (l) =O(k l^2 \eps^{-l} \L(\eps) \L(\d))$ in a shift of a subspace $V_{\eps_l/5} (l-1,k_*(l))$ 
thanks to Theorem \ref{t:uniform_KM} in the case $l=3$ or by the induction assumption for larger $l$. 
%It coasts us just co--dimension \eqref{f:uniform_KM}, namely,
%\begin{equation}\label{f:induction_l-1}
%    \codim V_{l-1} \ll  \eps^{-3}_l l^3  \L^3 (\d) \L^2 (\eps_l \d) \cdot (k_* l \eps^{-l} \L(\eps))^4 \,.
%\end{equation}
Now we apply Lemma \ref{l:e_to_d} and find $k_1 = O(kl \eps^{-l} \L(\eps))$ such that  
\[
    \E_l^{k_1} (A) \ge \left(1+ \eps_l \right)^{lk_1} \d^{lk_1} N^{l+k_1} \,.
\]
After that we use Proposition \ref{p:increment_Ekl} with 
$\eps = \eps_l$.
One can check that  $kl \gg \eps_l^{-1} \L(\eps_l)$ and that $A$ is sufficiently uniform set (to apply our proposition) relatively to $\E^{k_*}_{l-1}$ thanks to our choice of $k_*$.  
%Clearly, 
Estimate \eqref{cond:increment_2_Ekl}  implies that 
the procedure must stop after $O(\eps^{-1}_l \L(\d))$ number of steps and thus the final co--dimension is  
\[
   \codim V_\eps (l,k) \ll 
   \eps^{-1}_l \L(\d) \codim V_{\eps_l/5} (l-1,k_* (l)) \ll 
   l \eps^{-l} \L(\d) \codim V_{\eps_l/5} (l-1, k l^2 \eps^{-l} \L(\eps) \L(\d)) \,.
\]   
    Put $L=l!$. 
    Solving the functional inequality above and using \eqref{f:uniform_KM}, we get 
\begin{equation}\label{tmp:09.03_1}
    \codim V_\eps (l,k) \ll L \eps^{-l(l-1)/2} \L^{l-2} (\d) \codim V_{(\eps/8l)^{2L}} (2, k(l!)^2  \eps^{-l(l-1)/2} (\L(\eps) \L(\d))^{l-2})
\end{equation} 
\[
    \ll \eps^{-28l^l} (8l)^{28l^l} k^4 \L^{4l} (\eps) \L^{5l} (\d)
\]
as required. 
Actually, one can see that the number of steps of our algorithm is at most $O(\eps^{-1}_l \L(\d))$ (due to every time we increase the density $\d$ to $\d (1+\eps_l/8)$ and hence we do not need the first multiple in \eqref{tmp:09.03_1}). Nevertheless, it gives us a bound of the same sort.   
This concludes the proof of the theorem. 
%Theorem \ref{t:uniform_KM_k}. 
$\hfill\Box$

\bp

Theorem \ref{t:uniform_KM_k} allows us to find a shift of subspace where our set $A$ is uniform relatively to $\E^k_l$. 
A modification of this argument allows to prove more
and this is important for applications. 
Actually, for any relatively dense $A\subseteq \F_p^n$, $|A| = \d N$ there is a partition of $\F_p^n$ onto shifts of subspaces $V_j$ such that 
\begin{equation}\label{f:A_L2}
    A = \left(\bigsqcup_j A\cap (V_j+ x_j) \right) \bigsqcup \Omega 
    %=  \bisqcup_j A_j
\end{equation} 
such that $|\Omega|= \omega N$, each set $(A-x_j)$ is $\eps$--uniform in $V_j$ relatively to $\E^k_l$ and $\codim V_j = O_{\eps,\omega, \d} (1)$. 
Such results are not new see, e.g.,  \cite{GT_primes} and \cite{Green_models}, \cite{sh_Sz_LMS},  \cite{sh_Sz_survey}.
The idea is to replace $L_\infty$--increment to $L_q$--increment with a controllable parameter $q$.  
%*%Let us 
In this section we obtain the driving proposition, which we will use in the future paper. 
%which we will use in the next section. 

\begin{proposition}
    Let $\Gr = \F_p^n$, $A\subseteq \Gr$, $|A|=\d N$, $\eps \in (0,1/2)$ be a real number and $k,l \ge 2$ be positive integers, and $k \gg l\eps^{-l} \L(\eps)$.
    %be an even number.
    %, $l\le \eps^{-1}$. 
    Suppose that 
\begin{equation}\label{cond:L2_ increment_Ekl}
    \E^k_{l} (f_A) = \eps^{kl} \d^{kl} N^{k+l} \,,
\end{equation} 
    and that $A$ is $2^{-11l} l^{-l} \eps^{l^2}$--uniform with respect to $\E^{k_*}_{l-1}$ for  an even $k_* = O(kl^2 \eps^{-l} \L(\eps) \L (\delta))$.
    Then there is a subspace $V \subseteq \Gr$ such that 
\begin{equation}\label{cond:L2_increment_1_Ekl}
    \mathrm{codim} V \ll \eps^{-2l} l^5 k^4 \L^2 (\d) \L^2 (\eps \d)
    \,,
\end{equation} 
%    and  
%    For $l=2$ one has 
\begin{equation}\label{cond:L2_increment_2_Ekl+} 
    N^{-1} \sum_x (f_A * \mu_V)^l (x) \ge  
%    2^{-7l} \eps^l l^{-l} \d^l 
    2 \left(\frac{\eps \d}{2^7} \right)^l
%    2^{-2} \eps     \d^{2} \,.
    \,.
\end{equation} 
    In particular, 
\begin{equation}\label{cond:L2_increment_new} 
    N^{-1} \sum_x (A * \mu_V)^l (x) \ge  
%    2^{-7l} \eps^l l^{-l} \d^l 
    \d^l \left(1+\frac{\eps^l}{2^{7l}} \right)
%    2^{-2} \eps     \d^{2} \,.
    \,.
\end{equation}
    Further suppose that $A$ is $2^{-11} \eps^l/l^2$--uniform with respect to $\E^{k_*}_{l-1}$ for an even  $k_* = O(kl^2 \eps^{-l} \L(\eps) \L (\delta))$. 
    Then for a certain even $q$, $q \ll l^3 \L(\eps \delta)$ and $l>2$ the following holds 
\begin{equation}\label{cond:L2_increment_2_Ekl} 
    N^{-1} \sum_x (f_A * \mu_V)^q (x) \ge  
    %2^{-2q} \eps 
    (4\d)^{q} \,.
\end{equation} 
    In particular,
\begin{equation}\label{cond:L2_increment_2_Ekl_A} 
    N^{-1} \sum_x (A * \mu_V)^q (x) \ge  (2\d)^{q} \,.
\end{equation} 
%    Finally, let $A_1 \subseteq A$, $|A_1|=\d_1 N$ be any non $\eps_1$--uniform set relatively $\E_l^k$. 
%    Then 
%\begin{equation}\label{cond:L2_increment_2_Ekl_A_*} 
%    N^{-1} \sum_x (A_* * \mu_V)^q (x) \ge  (2\d)^{q} \,.
%\end{equation
\label{p:L2_increment_Ekl}
\end{proposition}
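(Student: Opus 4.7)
The plan is to parallel Proposition \ref{p:increment_Ekl} in the first moves, diverging in the final extraction step to produce an $L^l$ (respectively $L^q$) lower bound on $\|f_A*\mu_V\|$ via H\"older instead of an $L^\infty$ density increment. First apply Lemma \ref{l:e_to_d} with parameter $\eps$ to pass from $\E^k_l(f_A)=\eps^{kl}\d^{kl}N^{k+l}$ to $\E^{k_1}_l(A)\ge(1+\eps_l)^{lk_1}\d^{lk_1}N^{l+k_1}$, where $\eps_l:=\eps^l/(8l)$ and $k_1=O(kl\eps^{-l}\L(\eps))$ is even (the $\E^{k_*}_{l-1}$-uniformity controls the lower-order terms in the expansion of $\Cf_l(A)$). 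Then invoke Lemma \ref{l:BSzG_k} with $\eta\sim\eps_l$ to extract the super-level set $S=\{|x|=l:\Cf_l(A)(x)\ge(1+\eps_l/4)^l\d^l N\}$ and a dense set $B=A_z$ of density $\beta\ge 2^{-1/(l-1)}(1+\eps_l)^k\d^k$, satisfying $N^{-1}\sum_{|x|=l}S(x)\Cf_l(B)(x)\ge(1-2\eta)|B|^l$. Finally, applying Sanders' Lemma \ref{l:Sanders_V} to the reduced set $\bar S\subseteq\Gr^{l-1}$ produces a subspace $V$ of codimension obeying \eqref{cond:L2_increment_1_Ekl}, and the rearrangement used in the proof of Proposition \ref{p:increment_Ekl} yields
\[
(1-3\eta)(1+\eps_l/4)^l\d^l|B|^l N\le\sum_\alpha(A*B*\mu_V)(\alpha)(A\circ B)^{l-1}(\alpha).
\]

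The main new step is to write $A=f_A+\d$ only in the first factor, obtaining the decomposition into a mean part $\d|B|\sum_\alpha(A\circ B)^{l-1}(\alpha)$ and a fluctuation part $\sum_\alpha(f_A*B*\mu_V)(\alpha)(A\circ B)^{l-1}(\alpha)$. By Corollary \ref{c:A_circ_B} together with the stringent $\E^{k_*}_{l-1}$-uniformity hypothesis (whose small parameter $2^{-11l}l^{-l}\eps^{l^2}$ is calibrated exactly for this step), the mean part is at most $(1+O(\eps_l/l))\d^l|B|^l N$; subtracting from the lower bound leaves
\[
2^{-7}\eps^l\d^l|B|^l N\le\sum_\alpha(f_A*B*\mu_V)(\alpha)(A\circ B)^{l-1}(\alpha).
\]
Applying H\"older with exponents $l$ and $l/(l-1)$ on the right-hand side converts this into $\|f_A*B*\mu_V\|_l^l\ge c(\eps\d)^l|B|^l N$, provided one controls $\sum_\alpha(A\circ B)^l(\alpha)$ from above by $\sim\d^l|B|^l N$. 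The latter is obtained by expanding $(A\circ B)^l$ binomially around $\d|B|$ and bounding each mixed term via Lemma \ref{l:uniformity} using the $\E^{k_*}_{l-1}$-uniformity (the top purely-fluctuation term $\sum_\alpha(f_A\circ B)^l(\alpha)$, to which Lemma \ref{l:uniformity} does not directly apply, is handled separately via $\|f_A\circ B\|_\infty\le|B|$ combined with the $L^2$ bound $\sum_\alpha(f_A\circ B)^2(\alpha)\le 2\d|B|N$). Young's inequality $\|f_A*B*\mu_V\|_l\le|B|\|f_A*\mu_V\|_l$ concludes \eqref{cond:L2_increment_2_Ekl+}, and \eqref{cond:L2_increment_new} follows by expanding $(A*\mu_V)^l=(\d+f_A*\mu_V)^l$ and invoking the power-mean inequality to transfer the $L^l$ increment of $f_A*\mu_V$ into the $l$-th moment excess of $A*\mu_V$ over $\d^l$.

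The $L^q$ strengthening \eqref{cond:L2_increment_2_Ekl} for even $q\ll l^3\L(\eps\d)$ is obtained by bootstrapping: iterating the density increment of Proposition \ref{p:increment_Ekl} (or the present proposition) $O(\eps^{-l}l\L(\eps\d))$ times produces a subspace on a coset of which $A*\mu_V$ exceeds $5\d$ (equivalently $f_A*\mu_V\ge 4\d$), from which \eqref{cond:L2_increment_2_Ekl_A} and hence \eqref{cond:L2_increment_2_Ekl} are immediate on raising to the $q$-th power. The main obstacle throughout is controlling $\sum_\alpha(A\circ B)^l(\alpha)$ without the $\E^{k_*}_l$-uniformity of $A$, which forces the delicate binomial expansion above and drives the need for the very tight uniformity parameter $2^{-11l}l^{-l}\eps^{l^2}$ in the $\E^{k_*}_{l-1}$ hypothesis.
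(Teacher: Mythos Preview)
Your outline for the $L^l$ part tracks the paper's argument closely (Lemma~\ref{l:e_to_d}, then Lemma~\ref{l:BSzG_k}, then Lemma~\ref{l:Sanders_V}, then split $A=f_A+\d$ in the first factor and subtract the mean), and the H\"older/Young manoeuvre $\|f_A*B*\mu_V\|_l\le |B|\,\|f_A*\mu_V\|_l$ is exactly the paper's inequality~\eqref{tmp:S(l)_calculation} in different clothing. The problem is the step where you control $\sum_\alpha (A\circ B)^l(\alpha)$. Your interpolation
\[
\sum_\alpha (f_A\circ B)^l(\alpha)\ \le\ \|f_A\circ B\|_\infty^{\,l-2}\sum_\alpha (f_A\circ B)^2(\alpha)\ \le\ |B|^{l-2}\cdot \d|B|^2 N=\d|B|^l N
\]
is too weak by a factor $\d^{-(l-1)}$: plugging it into the H\"older bound yields $S(l)\gg(\eps_l l)^l\d^{\,l^2-l+1}$, not $(\eps\d)^l$. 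The paper does not attempt this interpolation; it simply appeals to Corollary~\ref{c:A_circ_B} for the full $l$-th moment, which gives $\sum_\alpha (A\circ B)^l(\alpha)\le 2\d^l|B|^l N$ and then $S(l)\ge 2^{-4l+2}l^l\eps_l^l\d^l$. (The paper also treats odd $l$ by the small extra step $\sum_\alpha|(f_A*\mu_V)^l|\le NS(l)+2\zeta^{l-1}\d^l N$, which your sketch omits.)

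Your proposed route to the $L^q$ bound is genuinely different from the paper's and does not work. Iterating the $L^\infty$ density increment produces a single coset $V+x_0$ with $(f_A*\mu_V)|_{V+x_0}\ge 4\d$, hence only
\[
N^{-1}\sum_x (f_A*\mu_V)^q(x)\ \ge\ \frac{|V|}{N}\,(4\d)^q,
\]
which falls short of $(4\d)^q$ by the factor $|V|/N=p^{-\codim V}$; and the iteration would in any case inflate $\codim V$ beyond \eqref{cond:L2_increment_1_Ekl}. The paper instead stays with the \emph{same} subspace $V$ and expands $\sigma_2$ further via $(A\circ B)^{l-1}=\sum_{j}\binom{l-1}{j}(\d|B|)^{l-1-j}(f_A\circ B)^j$. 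Each piece $\sigma(j)$ with $j<l$ is then estimated by H\"older with exponent $q$ together with Lemma~\ref{l:uniformity} for $\sum_\alpha(f_A\circ B)^j(\alpha)$, which is available because $j\le l-1$ and only the $\E^{k_*}_{l-1}$-uniformity is needed. This is precisely the point that lets one avoid the top term $j=l$ and obtain $S(q)\ge(4\d)^q$ with the stated $q\ll l^3\L(\eps\d)$.
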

\begin{proof} 
    Using Lemma \ref{l:e_to_d} we find
%    $k_1 \le k_* \ll kl \eps^{-l} \L(\eps)$
    $k_1 \ll kl \eps^{-l} \L(\eps)$ such that 
\[
    \E^{k_1}_{l} (A) \ge 
    \left(1+\frac{\eps \eps^{l-1}_*}{8l} \right)^{k_1 l} \d^{k_1 l} N^{k_1+l}
    =
    (1+\eps_l)^{k_1 l} \d^{k_1 l} N^{k_1+l} \,.
\]
    Let $\zeta = 2^{-8l} \eps^l_l$, $\zeta_1 = 2^{-8} \eps_l/l$ and  for any positive integer $n$ we put 
    $S (n) = N^{-1} \sum_\a (f_A * \mu_V)^n (\a)$. 
    %and $q=Cl \L(\eps \d)$ for a sufficiently large constant $C>1$, $q$ is an even number.  
%    Also, put $S(q) = N^{-1} \sum_\a (f_A * \mu_V)^q (\a)$. 
    %and choose $\eta= \eps/100$.
We use the same argument as in Proposition \ref{p:increment_Ekl} (the parameter $\eta$ equals  $\eps_l/30$)
before inequality \eqref{tmp:01.03_4_Ekl}.
One has 
\begin{equation}\label{tmp:03.09_2}
    (1-3\eta) (1+\eps_l/4)^l \d^l |B|^l N = \d |B| \sum_\a (A\circ B)^{l-1} (\a) + \sum_\a (f_A * \mu_V) (\a) ((A\circ B)^{l-1} \circ B) (\a) = \sigma_1 +\sigma_2 \,.
\end{equation} 
    Here we have the required upper bound for the dimension of $V$ similar to \eqref{tmp:codim} 
\begin{equation*}
    \codim V \ll \eps^{-2}_l l \L^2 (\beta) \L^2 (\eps_l \beta^l) 
    \ll 
     \eps^{-2l} l^5 k^4 \L^2 (\d) \L^2 (\eps \d) \,.
\end{equation*}
    We need to consider the cases $l=2$ and $l>2$ separately. 
    In the case $l=2$ we follow the method from \cite{kelley2023strong}.
    %and show that one can choose $q=2$ in \eqref{cond:L2_increment_2_Ekl}. 
    Clearly, if $l=2$, then $\sigma_1=\d |B|^2 |A|$ and hence
\begin{equation}\label{tmp:11.03_1}
    \frac{7\eps_2}{18} \d^2 |B|^2 N \le \sum_\a ((f_A \circ  f_A) * \mu_V) (\a) (B \circ B) (\a) \,.
\end{equation} 
    Using  the Fourier transform twice and the fact that $\FF{\mu}_V (r) = V^{\perp} (r)$, we get
\[
     \frac{7\eps_2}{18} \d^2 |B|^2 N 
     \le 
     N^{-1} \sum_{r} \FF{\mu}_V (r) |\FF{f}_A (r)|^2 
     |\FF{B} (r)|^2
     \le 
     N^{-1} |B|^2 \sum_{r} \FF{\mu}_V (r) |\FF{f}_A (r)|^2 =
     |B|^2 \sum_{\a} (f_A\circ f_A) (\a) \mu_V (\a) \,.
\]
    It remains to notice that $\mu_V \circ \mu_V = \mu_V$. Let us give another proof which does not use the Fourier approach. Using the identity $\mu_V \circ \mu_V = \mu_V$ we estimate the right--hand side of 
    \eqref{tmp:11.03_1} as 
\[
    \| ((f_A \circ  f_A) * (\mu_V \circ \mu_V)) (\a) \|_\infty \cdot 
    \sum_\a  (B \circ B) (\a) 
    = |B|^2  \sum_x (f_A * \mu_V)^2(x)  
\]
as required.

    Now consider the general case. 
    To calculate the sum $\sigma_1$ we apply Corollary \ref{c:A_circ_B} and our assumption on $l$ to derive 
\begin{equation}\label{tmp:03.09_3}
    (\d |B|)^{-1} \sigma_1 \le \d^{l-1} |B|^{l-1} N (1+1.25 \zeta)^{l-1} 
    \le \d^{l-1} |B|^{l-1} N ( 1 + 2\zeta l) 
\,.
\end{equation} 
    In view of our choice of $\eta$ and $\zeta \le \zeta_1 = 2^{-8} \eps_l/l$ it gives us
\begin{equation}\label{tmp:03.09_3'}
%    2^{-3} \eps \d^l |B|^l N 
%        \le 
    2^{-3} \eps_l l \d^l |B|^l N
    \le 
    ((1-3\eta)(1+\eps_l l/4)- 1-2\zeta l)
    \d^l |B|^l N
    \le 
    \sigma_2
   \,.
\end{equation}  
    Applying the H\"older inequality several times for even $l$, we derive
\[
    \sigma_2 = \sum_\a (f_A * \mu_V) (\a) ((A\circ B)^{l-1} \circ B) (\a)
\]
\begin{equation}\label{tmp:S(l)_calculation-}
    \le 
     \left(\sum_\a (f_A * \mu_V)^l (\a) \right)^{1/l}
     \left( \sum_\a ((A\circ B)^{l-1} \circ B)^{l/(l-1)} (\a) \right)^{1-1/l}
\end{equation} 
\begin{equation}\label{tmp:S(l)_calculation}
     \le
     |B| S^{1/l} (l) N^{1/l} 
     \left( \sum_\a (A\circ B)^{l} (\a) \right)^{1-1/l}  N^{1-1/l} \,.
\end{equation} 
    Now using Corollary \ref{c:A_circ_B} and recalling estimate \eqref{tmp:03.09_3'}, we see that 
\begin{equation}\label{tmp:19.03_even}
    2^{-4l+2} l^l \eps^l_l \d^l \le S(l) = N^{-1} \sum_\a (f_A * \mu_V)^l (\a)
    %\,.
\end{equation} 
    as required. 
    %For odd $l$ the argument is similar, we do not replace the function $A$ to $f_A$ in $A* \mu_V$  as we have done in \eqref{tmp:03.09_2}, after that we use the  H\"older inequality and finally we replace $A$ to $f_A$ in $\sum_\a (A * \mu_V)^l (\a)$, using the binomial theorem, see Remark {\bf ??????}
    For odd $l$ the argument is the similar, one has 
\[
    \sum_\a |(f_A * \mu_V)^{l} (\a)| 
    \le 
    \sum_\a (f_A * \mu_V)^{l-1} (\a) ( (f_A * \mu_V) (\a) + 2\d)
    =
    N S(l) + 2\d \sum_\a (f_A * \mu_V)^{l-1} (\a)
\]
\[
    \le 
    N S(l) + 2 \zeta^{l-1} \d^{l} N
    \le  N S(l) + 2^{-4l} l^l \eps^l_l \d^l N
\]
    and hence we have obtain a lower bound for $S(l)$ similar to \eqref{tmp:19.03_even}. 
    Thus we have 
    %obtained 
    proved 
    \eqref{cond:L2_increment_2_Ekl+}. 
    To derive  \eqref{cond:L2_increment_new}, we write 
\[
    N^{-1} \sum_{\a} (A* \mu_V)^l (\a) = S(l) + \d^l +
    \sum_{0<j<l} \binom{l}{j} \d^{l-j} \sum_{\a} (f_A* \mu_V)^j (\a) 
\]
    and apply Lemma \ref{l:e_to_d}
\[
    N^{-1} \sum_{\a} (A* \mu_V)^l (\a) \ge S(l) + \d^l - 2l \d^l \zeta \ge \d^l (1+2^{-4l} l^l \eps^l_l) 
    %\,.
\]
    as required. 

\bigskip 

Now let us obtain \eqref{cond:L2_increment_2_Ekl} and  \eqref{cond:L2_increment_2_Ekl_A}.  
    Returning to \eqref{tmp:03.09_2} and \eqref{tmp:03.09_3'}, we have 
%        One has 
\[
    \sigma_2 =\sum_{j=1}^{l-1} \binom{l-1}{j} (\d |B|)^{l-1-j} \sum_\a ((f_A \circ B)^j \circ B) (\a) (f_A * \mu_V)  (\a)
\]
\begin{equation}\label{tmp:11.03_2}
    = \sum_{j=1}^{l-1} \binom{l-1}{j} (\d |B|)^{l-1-j} \sigma(j) \,.
\end{equation}  
    For even $j$ we can apply formula \eqref{tmp:S(l)_calculation-} with $l=q$  
    %\eqref{tmp:03.09_3} 
    %$\mu_V \circ \mu_V$ 
    again, then the estimate $\| ((f_A \circ B)^j \circ B) (\a) \|_\infty \le |B|^{j+1}$, 
    as well as 
    %Corollary \ref{c:A_circ_B} 
    Lemma \ref{l:uniformity} 
    with the  parameter $j<l$ and derive 
    %, we have 
\[
    \sigma^q (j) \le |B|^{q+j} \left( \sum_\a (f_A \circ B)^{j} (\a) \right)^{q-1}
    \cdot 
    %\left( 
    \sum_\a (f_A * \mu_V)^q  (\a) %\right)^{1/q} 
\]
\begin{equation}\label{tmp:12.03_1}
    \le \zeta^{j(q-1)}_1 \d^{j(q-1)} |B|^{(j+1)q} N^{q}  \cdot S(q)
%    \sum_\a (f_A * \mu_V)^q  (\a) 
%    \,.
\end{equation}  
For odd $j$, we write 
\[
    \sigma (j) = \sum_\a ((f_A \circ B)^{j-1} \cdot (A\circ B) \circ B) (\a) (f_A * \mu_V)  (\a) 
    - \d |B| \sigma(j-1) 
    = \sigma_* (j) - \d |B| \sigma(j-1) 
\]
    and thus we need to estimate $\sigma_* (j)$   because in  $\sigma(j-1)$  the argument $j-1$ is even.
    %%As for  
    To bound the sum 
    $\sigma_* (j)$  
    %to estimate
    we use the same argument as in \eqref{tmp:12.03_1} (and the identity  $(A\circ B) (\a) = (f_A\circ B) (\a) + \d |B|$, of course) to show that 
\[
    \sigma^q_* (j) \le |B|^{q+j} \left( \sum_\a (f_A \circ B)^{j-1} (\a) ( A\circ B) (\a) \right)^{q-1}
    \cdot 
    %\left( 
    \sum_\a (f_A * \mu_V)^q  (\a) %\right)^{1/q} 
\]
\begin{equation}\label{tmp:12.03_1'}
    \le 2^{q-1} \zeta^{j(q-1)}_1 \d^{j(q-1)} |B|^{(j+1)q} N^{q}  \cdot S(q) \,.
%    \sum_\a (f_A * \mu_V)^q  (\a) 
%    \,.
\end{equation}  
    Thus in view of \eqref{tmp:03.09_3'}, \eqref{tmp:12.03_1}, \eqref{tmp:12.03_1'} as well as our choice of $q$, we get 
\[
    2^{-5} \eps_l l \d \le l((1+\zeta_1)^{l-1} - 1) S(q)^{1/q} 
    \le 2\zeta_1 l^2 S(q)^{1/q} 
    %N^{-1} \sum_\a (f_A * \mu_V)^q (\a)
\]
    and hence 
\begin{equation}\label{tmp:4delta} 
   %2^{-2q} \eps \d^{q} N  \le 
   %2^{-6q} (\eps/\zeta) 
    (4\d)^{q} \le  N^{-1} \sum_\a (f_A * \mu_V)^q (\a)
\end{equation} 
as required. 
To get \eqref{cond:L2_increment_2_Ekl_A} just use the identity  $(f_A * \mu_V) (\a) = (A * \mu_V) (\a) - \d$ and apply the binomial formula to \eqref{tmp:4delta}.
%
%
%    It remains to obtain \eqref{cond:L2_increment_2_Ekl_A_*}. 
This completes the proof. 
$\hfill\Box$
\end{proof}

\begin{remark}
    %At the fist view 
    At first sight 
    it looks strange that estimates \eqref{cond:L2_increment_2_Ekl}, \eqref{cond:L2_increment_2_Ekl_A} do not depend on $\eps$ but we just pay for this taking larger codimension   in \eqref{cond:L2_increment_1_Ekl}. 
\label{r:codim_absorbe}
\end{remark}

\section{Appendix}
\label{sec:appendix}

\subsection{On longer progressions}
\label{subs:AP4}

We need a generalization of norms \eqref{def:E_kl}. Having vectors $x=(x_1,\dots,x_s) \in \Gr^s$ and $y=(y_1,\dots,y_t) \in \Gr^t$ (we write that $|x|=s$ and $|y|=t$) define its ``Minkowski'' sum as $x\oplus y \in \Gr^{st}$, where the components of $x\oplus y$ are $x_i+y_j$, $i\in [s]$, $j\in [t]$ (and similarly for higher sums). 
%Write 
Put
\begin{equation}\label{def:mE_kl}
    \mathcal{E}^k_{s,t} (f) = \sum_{|x|=s}\, \sum_{|y|=t} \Cf^k_{st} (f) (x\oplus y) =
    \sum_{|x|=s}\, \sum_{|y|=t}\, \sum_{|z|=k} \mP_{stk} (f) (x\oplus y \oplus z) 
    \,. 
\end{equation} 
In these terms 
\begin{equation}\label{def:mE_kl_pre}
    \E^k_l (f) =  \sum_{|x|=l}\, \sum_{|y|=k}\, \mP_{kl } (f) (x\oplus y) \,. 
\end{equation}
For even $k,s,t$ and a real function  $f$ one has $\mathcal{E}^k_{s,t} (f) \ge \E^k_{s} (f), \E^k_{t} (f) \ge 0$ and the triangle inequality for $\mathcal{E}^k_{s,t}$ can be obtained exactly as in \cite[Appendix]{sh_Ek} or just see Subsection \ref{subs:gen_norms} (but of course one needs an additional application of the H\"older inequality due to we have the longer sum in \eqref{def:mE_kl} than in \eqref{def:mE_kl_pre}). 
Thus 
%$\| f\|_{\mathcal{E}^k_{s,t}} := (\mathcal{E}^k_{s,t} (f))^{1/kst}$ 
$(\mathcal{E}^k_{s,t} (f))^{1/kst}$ 
defines a norm of $f:\Gr \to \R$ in the case of even $k,s,t$. 
Notice that similar to $\E^k_l (f)$ the quantity $\mathcal{E}^k_{s,t} (f) \ge 0$, 
%and the triangle inequality for  $\mathcal{E}^k_{s,t} (f)$ takes place, 
provided at least one of $k,s,t$ is even but, nevertheless,  it is not  always a norm in this case, see \cite[Section 4]{sh_Ek}.  
By some symmetricity  reasons (see, e.g., formulae \eqref{f:E^k_st_symmetries} below) we make a normalization and put 
$$
\| f\|_{\mathcal{E}^k_{s,t}} := (|\Gr|^{-2} \mathcal{E}^k_{s,t} (f))^{1/kst} := (\bar{\mathcal{E}}^k_{s,t} (f))^{1/kst} 
$$
for $f:\Gr \to \R$. 
Clearly, one has 
\begin{equation}\label{f:mE_kl_1}
    \mathcal{E}^k_{s,t} (f)
    = 
    \sum_{|x|=s}\, \sum_{|z|=k}  \Cf^{t}_s (f_z) (x) 
    =
    \sum_{|x|=s}\, \sum_{|z|=k}  \Cf^{t}_{sk} (f) (x\oplus z) \,,
\end{equation}
    and
\begin{equation}\label{f:mE_kl_2}    
    \mathcal{E}^k_{s,t} (f)
    =
    \sum_{|y|=t}\, \sum_{|z|=k}  \Cf^{s}_t (f_z) (y) 
    =
    \sum_{|y|=t}\, \sum_{|z|=k}  \Cf^{s}_{tk} (f) (y \oplus z) 
    \,.
\end{equation} 
Thus we have the duality relation similar to \eqref{def:E_kl}
\begin{equation}\label{f:E^k_st_duality}
    \mathcal{E}^k_{t,s} (f) = \mathcal{E}^k_{s,t} (f) = \mathcal{E}^t_{s,k} (f) = \mathcal{E}^s_{t,k} (f)\,. 
\end{equation} 
%Finally, 
Also, let us remark that the expectations over $x\oplus y$ of the generalized convolution of any real function $f : \Gr \to \mathbb{R}$ is connected with the higher energies  
\begin{equation}\label{f:E^k_st_expectation}
    \sum_{|x|=s}\, \sum_{|y|=t} \Cf_{st} (f) (x\oplus y) = N \E^t_{s} (f) \,.
\end{equation}
In particular, the expectation above is always non--negative if $s$ or $t$ is an even number and we see immediately that the duality \eqref{def:E_kl} takes place. Formula \eqref{f:E^k_st_expectation} can be proved directly or it follows from  \eqref{f:mE_kl_1}, \eqref{f:mE_kl_2} and the fact that $\mathcal{E}^k_{t,1} (f) = N \E^k_{t} (f)$.
Finally, notice that in contrast to $\Cf_l (x)$ the function $\Cf_{st}(x\oplus y)$ enjoys even two symmetries, namely,
\begin{equation}\label{f:E^k_st_symmetries}
    \Cf_{st} (f) (x\oplus y) = \Cf_{st} (f) ((x+\mD_s (w_1))\oplus y) = 
    \Cf_{st} (f) (x\oplus (y + \mD_t(w_2))) 
\end{equation} 
for any $w_1,w_2\in \Gr$. 
It gives, in particular, 
\begin{equation}\label{f:Cf_to_Cf'+}
    \Cf_{st} (f) (x\oplus y) = \Cf_{st} (f) ((x - \mD_s (x_1))\oplus (y-\mD_t (y_1))) =  N^2 \Cf'_{st} (f) (w) \,,
\end{equation} 
where $|w|=st-1$ and, 
%more precisely, 
more concretely, 
$w_{ij} =(x_i-x_1)+(y_j-y_1)$, $i\in [s]$, $j\in [t]$ and $(i,j) \neq (1,1)$.

\bp 

Now we are ready to obtain our counting lemma. 
%We 
Let us 
write $L(x,y) = \a x+\beta y + \gamma$ for a non--trivial linear form.
We say that two forms are non--proportional if their coefficients are not proportional. 
%Having 
Given  
a real number $q>1$ put $q^* = \frac{q}{q-1}$. 

\begin{theorem}
    Let $N$ be a prime number and 
$k=4$, $l_1,l_2 \ge 2$ be positive integers.
    %and $\mathcal{L}_i = \prod_{j\equiv i\pmod 2} l_j$, $i=0,1$. 
    %$\mathcal{L}_1 = \sum_{j\equiv 1\pmod 2} l_j$. 
    Also, let $f_1,\dots, f_k : \Z/N\Z \to \R$ be functions  and $L_1,\dots,L_k$ be non--proportional linear forms such that $L_2,\dots,L_k$ depend on both variables. Then  
\begin{equation}\label{f:counting}
    \left|\sum_{x,y} f_1( L_1(x,y)) \dots f_k( L_k(x,y)) \right| 
    \le 
    %\prod_{j=1}^{k-2} \| f_j\|_{\FF{l}_j l^*_j} \cdot \| f_{k-1} 
    \| f_1\|_{l^*_1} \| f_2\|_{l^*_2}
    \| f_3 \|_{\mathcal{E}^2_{l_1,l_2}} 
    \| f_{4} \|_{\mathcal{E}^2_{l_1,l_2}} \,.
\end{equation}
%    where $\FF{l}_j = \prod_{i<j,\, i\equiv j \pmod 2} l_i$. 
\label{t:counting}
\end{theorem}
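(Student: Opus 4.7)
My strategy is to follow a Gowers-type counting argument via a sequence of H\"older and Cauchy--Schwarz applications. First, by a linear change of variables on $\Gr^2$ (valid since $N$ is prime and the forms are non-proportional, with $L_2, L_3, L_4$ depending on both variables) we may assume that $L_1(x,y) = x$ and $L_2(x,y) = y$, while $L_3, L_4$ remain non-proportional linear forms each depending non-trivially on both variables; because $N$ is prime the associated substitutions are bijections.

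Next, I would apply H\"older in $x$ with exponents $(l_1^*, l_1)$ to extract $\|f_1\|_{l_1^*}$, yielding
\[
|T| \le \|f_1\|_{l_1^*} \left( \sum_x \Bigl| \sum_y f_2(y) f_3(L_3(x,y)) f_4(L_4(x,y)) \Bigr|^{l_1} \right)^{1/l_1},
\]
and then, for each fixed $x$, H\"older in $y$ with exponents $(l_2^*, l_2)$ to extract $\|f_2\|_{l_2^*}$. After raising to the $l_1 l_2$ power and expanding the $l_1$-th power into a sum over $y_1, \ldots, y_{l_1}$, one is reduced to bounding
\[
W := \sum_{x,\, y_1, \dots, y_{l_1}} \prod_{m \in [l_1]} |f_3(L_3(x, y_m))|^{l_2} |f_4(L_4(x, y_m))|^{l_2}
\]
by $N^{c} \|f_3\|_{\mathcal{E}^2_{l_1,l_2}}^{l_1 l_2} \|f_4\|_{\mathcal{E}^2_{l_1,l_2}}^{l_1 l_2}$ for the appropriate normalization constant.

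To conclude I would apply one further Cauchy--Schwarz---which is precisely the source of the exponent ``$2$'' in $\mathcal{E}^2_{l_1,l_2}$---decoupling the $f_3$ factors from the $f_4$ factors. After this decoupling, an additional change of variables realized via the non-proportionality of $L_3$ (respectively $L_4$) together with a dummy shift summation, exploiting the translation symmetry \eqref{f:E^k_st_symmetries} of $\mathcal{C}_{l_1 l_2}(\cdot)(x\oplus y)$, recasts each of the resulting sums in the canonical form appearing in definition \eqref{def:mE_kl}, identifying them with $\mathcal{E}^2_{l_1,l_2}(f_3)$ and $\mathcal{E}^2_{l_1,l_2}(f_4)$ respectively.

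The main obstacle is this last identification: the sum $W$ has only $l_1 l_2$ copies of each of $f_3, f_4$, whereas $\mathcal{E}^2_{l_1,l_2}$ involves $2 l_1 l_2$ copies of $f$. The Cauchy--Schwarz supplies the needed doubling; the subtle point is to arrange it so that after the change of variables the two copies of each factor appear at translates $z_1 + x_i + y_j$ and $z_2 + x_i + y_j$ summed over an auxiliary pair $(z_1, z_2)$, exactly as in $\mathcal{C}^2_{l_1 l_2}(f)(x \oplus y)$. This requires careful use of the duality identities \eqref{f:mE_kl_1}--\eqref{f:mE_kl_2} analogous to the one exploited in Lemma \ref{l:uniformity}, and the primality of $N$ to ensure that every intermediate substitution is a bijection and that the two extra factors of $N$ introduced by the Cauchy--Schwarz and shift are exactly cancelled by the normalization $\|f\|_{\mathcal{E}^2_{l_1,l_2}}^{2 l_1 l_2} = N^{-2} \mathcal{E}^2_{l_1,l_2}(f)$.
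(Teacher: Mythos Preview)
Your overall plan---two H\"older steps to extract $f_1,f_2$, then a Cauchy--Schwarz to decouple $f_3$ from $f_4$---matches the paper, but the second H\"older step is applied in the wrong order and this is a genuine gap, not a cosmetic one.

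You write ``for each fixed $x$, H\"older in $y$'', and your $W$ reflects this: it has a single variable $x$, $l_1$ variables $y_1,\dots,y_{l_1}$, and the factors $|f_3|^{l_2}$, $|f_4|^{l_2}$ are pointwise powers. This object cannot be bounded by $\|f_3\|_{\mathcal{E}^2_{l_1,l_2}}\|f_4\|_{\mathcal{E}^2_{l_1,l_2}}$. The norm $\mathcal{E}^2_{l_1,l_2}(f)$ is a \emph{multilinear} quantity in $f$ and can be arbitrarily small while $\sum_z |f(z)|^{l_2}$ stays of order $\delta^{l_2} N$; take $f=f_A$ for a pseudo--random set $A$ of density $\delta$. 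In that situation your $W$ is of order $N^{l_1+1}\delta^{2l_1 l_2}$ independently of how $\mathcal{E}^2$--uniform $A$ is, so no Cauchy--Schwarz or change of variables can recover the right bound from $W$.

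The fix, and this is exactly what the paper does, is to reverse the order: after H\"older in $x$, first expand the $l_1$-th power into a sum over $y_1,\dots,y_{l_1}$ (so no absolute values appear on $f_3,f_4$), and only then apply H\"older in the vector $(y_1,\dots,y_{l_1})$ \emph{with the sum over $x$ still inside}. Expanding the resulting $l_2$-th power then produces $l_2$ independent copies $x_1,\dots,x_{l_2}$, and one arrives at the multilinear sum
\[
\sum_{x_1,\dots,x_{l_2}}\ \sum_{y_1,\dots,y_{l_1}}\ \prod_{i\in[l_2]}\prod_{m\in[l_1]} f_3\bigl(L_3(x_i,y_m)\bigr)\,f_4\bigl(L_4(x_i,y_m)\bigr)\,.
\]
It is precisely this $l_1\times l_2$ grid structure (the $x\oplus y$ pattern) that, after the shift trick $x_i\to x_i+z$, $y_m\to y_m+w$ and one Cauchy--Schwarz, yields $\|f_3\|_{\mathcal{E}^2_{l_1,l_2}}\|f_4\|_{\mathcal{E}^2_{l_1,l_2}}$. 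Your description of the last two steps (the shift and the Cauchy--Schwarz) is correct, but it only applies to this multilinear expression, not to your $W$.
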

\begin{proof} 
    Let $\sigma$ be the left--hand side of \eqref{f:counting}. 
    Without loss of  generality one can assume that $L_j(x,y)=\a_j x + \beta_j y$, $j\in [k]$. 
    Consider the nonzero form $L_1$ and suppose for concreteness that  $\a_1\neq 0$. 
    Changing the variables $\a_1 x + \beta_1 y \to x$, we obtain
\begin{equation}\label{tmp:sigma1}
    \sigma = \sum_{x,y} f_1(x) f_2( \tilde{L}_2(x,y)) \dots f_k( \tilde{L}_k(x,y)) \,,
\end{equation} 
    where here and below we write $\tilde{L}_j(x,y) =  L_{j} (x,y) = \a_j x + \beta_j y$ and the coefficients  $\a_j$, $\beta_j$  may change from line to line.
    Anyway one can check that all new forms $\tilde{L}_2,\dots, \tilde{L}_k$ in \eqref{tmp:sigma1} are nonzero and non--proportional.
    Moreover, by assumption the initial forms $L_2,\dots,L_k$ depend on both variables and we see that the new forms in \eqref{tmp:sigma1} depend on both variables as well. 
    Now we use the H\"older inequality and get
\[
    (\sigma/\|f_1\|_{l^*_1})^{l_1}
    \le
    \sum_{x} \left( \sum_y f_2( L_2(x,y)) \dots f_k( L_k(x,y)) \right)^{l_1}
\]
\[
    =
    \sum_{x,y} \mP_{l_1} (f_2) ( L_2(\mD_{l_1} (x),\mP_{l_1} (y))) \dots \mP_{l_1} (f_k) ( L_k(\mD_{l_1} (x),\mP_{l_1} (y))) \,.
\]
    Notice that we have decreased the number of our linear forms (but increased the number of variables). 
    Now let us make the  changing of the variables  similar to above, namely, $\a_1 \mD_{l_1} (x) + \beta_1 \mP_{l_1} (y) \to \mP_{l_1} (y)$ and again one can easily check that we preserve all conditions on our linear forms $L_3, \dots, L_k$. 
    Thus one has 
\[
    (\sigma/\|f_1\|_{l^*_1})^{l_1} \le 
    \sum_y \mP_{l_1} (f_2) (\mP_{l_1} (y)) \sum_x \mP_{l_1} (f_3) ( L_3(\mD_{l_1} (x),\mP_{l_1} (y))) \dots \mP_{l_1} (f_k) ( L_k(\mD_{l_1} (x),\mP_{l_1} (y)))
\]
    and using the H\"older inequality 
    %again,  
    one more time, 
    as well as the obvious identity 
\begin{equation}\label{tmp:26.02_-1}
    \left(\sum_y \mP^{l^*_2}_{l_1} (f_2) (\mP_{l_1} (y))  \right)^{l_2-1}  = \| f_2 \|^{l_1 l_2}_{l^*_2} \,, 
\end{equation} 
    we derive
\[
    (\sigma/\|f_1\|_{l^*_1} \|f_2\|_{l^*_2})^{l_1 l_2 } 
\]
\begin{equation}\label{tmp:sigma_k_2}
    \le 
    \sum_{x,y} \mP_{l_1 l_2} (f_3) ( L_3(\mP_{l_2} \mD_{l_1} (x),\mD_{l_2} \mP_{l_1} (y))) \mP_{l_1 l_2} (f_4) ( L_4(\mP_{l_2} \mD_{l_1} (x),\mD_{l_2} \mP_{l_1} (y))) \,.
\end{equation} 
    Now 
    let us analyse the right--hand side of formula \eqref{tmp:sigma_k_2}. 
%    the last formula. 
    First of all, it is easy to see that there are $l_2$ different variables $x_i$ and  $l_1$ different variables $y_j$ in 
    %both expressions \eqref{tmp:sigma_k_1}, 
    \eqref{tmp:sigma_k_2}. 
    Secondly, take the form $L_{k-1}$ (for $L_k$ the argument is the same) and notice that it depends on $\a_{k-1} x_i + \beta_{k-1} y_j$, $i\in [l_2]$, $j\in [l_1]$ and that every such expression appears exactly once. 
    Now introducing two more variables $z,w$ such that $x_i\to x_i+z$, $y_j \to y_j + w$ and then replacing $z,w$ to other variables $Z,W$, where $Z=\a_{k-1} z + \beta_{k-1} w$, $W = \a_{k} z + \beta_{k} w$ (this change of the variables is allowable because the forms $L_{k-1}$, $L_k$ are not proportional), we arrive to the quantities  $\Cf_{l_1l_2} (f_{k-1})$, 
    $\Cf_{l_1l_2} (f_{k})$ in 0
    \eqref{tmp:sigma_k_2}.
    Writing  $x= (x_1,\dots, x_{l_2})$, $y= (y_1,\dots, y_{l_1})$, we have finally 
\[
    (\sigma/\|f_1\|_{l^*_1} \|f_2\|_{l^*_2})^{l_1 l_2 } 
    \le 
    N^{-2}
    \sum_{\vec{x},\vec{y}} 
    \Cf_{l_1 l_2} (f_{k-1}) (\a_{k-1} \cdot x \oplus   \beta_{k-1} \cdot y)
    \Cf_{l_1 l_2} (f_{k}) (\a_{k} \cdot x \oplus  \beta_{k} \cdot y) \,.
\]
    Using the H\"older inequality the last time, as well as the fact that $\a_{k-1},\a_k,\beta_{k-1}, \beta_k \neq 0$, we obtain 
\[
 (\sigma/\|f_1\|_{l^*_1} \|f_2\|_{l^*_2})^{l_1 l_2 } 
\]
\[
    \le 
    \left( N^{-2}\sum_{|x| =l_2,\, |y|=l_1}  
    \Cf^2_{l_1 l_2} (f_{k-1}) (x \oplus y)
    \right)^{1/2} 
    \left( N^{-2} \sum_{|x| =l_2,\, |y|=l_1}  \Cf^2_{l_1 l_2} (f_{k}) (x \oplus y)
    \right)^{1/2} 
\]
\[    
    = 
    \| f_{k-1} \|^{l_1 l_2}_{\mathcal{E}^2_{l_1,l_2}} \cdot 
    \| f_{k} \|^{l_1 l_2}_{\mathcal{E}^2_{l_1,l_2}}
\]
as required. 
%This completes the proof. 
$\hfill\Box$
\end{proof}

\begin{remark}
    One can check that for any $l_1,\dots, l_{k-2}$ one has 
\[
    \frac{1}{l^*_1} + \frac{1}{l^*_2} +\frac{1}{l_1} + \frac{1}{l_2}= 2
\]
    and hence the right--hand side of bound \eqref{f:counting} has the correct order in $N$. 
    Similarly, taking $f_j (x) = f_A (x)$, $j\in [4]$ be the balanced function of a set $A$ and $l_1 \sim l_2 \sim \L (\d)$ we see that the dependence on $\d$ is also correct. 
\end{remark}

\begin{remark}
    As we have said in the previous remark the optimal dependence on the parameters $l_1,l_2$ in Theorem \ref{t:counting} is $l_1 \sim l_2 \sim \L (\d)$. 
    Suppose that the dependence on $\eps$ in Lemma \ref{l:e_to_d} and in all statements below is almost optimal, say, $c\eps$ for a constant $c\in (0,1)$. 
    Thanks to the induction scheme of the proof, it gives us the multiple $c^{\L(\d)} = \d^{-C}$ for a certain $C>0$ in codimention of the subspace $V$, where our set $A$ is uniform.
    But $\d^{-C}$ is more or less that usual Gowers' method gives to us and hence we have no special gain. 
    Thus there are considerable difficulties to extend the Kelley--Meka method for more complicated objects than arithmetic progressions of length three even on  the technical level. 
\label{r:ET_fail}
\end{remark}

We conclude the appendix showing that the convolutions $\Cf_{st} (f) (x\oplus y)$, $|x|=s$, $|y|=t$ 
%satisfy the same 
enjoy the almost periodicity properties similar to the ordinary convolutions $\Cf_s (f) (x)$. 
For $x=(x_1,\dots,x_r)$ let us write  for convenience $\bar{x} = (0,x_1,\dots,x_r)$.

\begin{lemma}
    Let $\eps \in (0,1]$ be a real number, $s,t,q\ge 2$ be positive integers, $l:=st-1$, $B\subseteq \Gr$, $|B| = \beta N$ and $F: \Gr^l \to \R$.
    Then there is a set $T\subseteq \Gr$, $|T|\ge |B| \exp(-O(\eps^{-2} q \log (1/\beta)))$    and such that for any $t\in T$ one has 
\[
    \sum_{|x|=s-1}\, \sum_{|y|=t-1} \left| (F\circ \mD_{l} (B+t) )(\bar{x}\oplus \bar{y}) - (F\circ \mD_{l} (B)) (\bar{x}\oplus \bar{y}) \right|^q 
\]
\begin{equation}\label{f:CS_new}
    \le 
    \eps^q |B|^{q-1} \sum_{|y|=t-1} \Cf'_t (|F|^q)^{s-1} (y) \cdot \Cf'_t (B,|F|^q, \dots, |F|^q) (y) \,.
\end{equation} 
\label{l:CS_new}
\end{lemma}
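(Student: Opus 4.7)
The plan is to follow the random-sampling proof of the classical Croot--Sisask lemma \cite{CS}, \cite{sanders2012bogolyubov}, adapted to the ``restricted'' convolution $F\circ\mD_l(B)$ evaluated on the slice $v=\bar x\oplus\bar y$; the argument runs in parallel to the proof of Lemma \ref{l:Sanders_V}, but with the full sum over $v\in\Gr^l$ replaced by the sum over the $s\times t$-grid slice. First I would pick $z_1,\ldots,z_k$ independently and uniformly from $B$, for $k=\lceil Cq\eps^{-2}\log(1/\beta)\rceil$ with a large absolute constant $C$, and introduce the random $k$-sample approximant
\[
Y(v)\;=\;\frac{1}{k}\sum_{i=1}^{k}F\bigl(\mD_l(z_i)-v\bigr),
\]
whose expectation equals $|B|^{-1}(F\circ\mD_l(B))(v)$. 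Applying the Marcinkiewicz--Zygmund inequality pointwise in $v$ gives
\[
\mathbb{E}\,|Y(v)-\mathbb{E}Y(v)|^q \;\le\; (Cq/k)^{q/2}\cdot|B|^{-1}\sum_{z\in B}|F(\mD_l(z)-v)|^q,
\]
which after summing over $v=\bar x\oplus\bar y$ and exchanging the $z$-sum with the $(\bar x,\bar y)$-sum must be massaged into the form of the right-hand side of \eqref{f:CS_new}.

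The key step is this combinatorial identification. Substituting $a_i=-\bar x_i$ and $b_j=z-\bar y_j$ so that the diagonal shift $\mD_l(z)$ is absorbed into the new parameters, I would exploit the rank-one structure of $\mD_l(z)$ on the slice: the $s-1$ rows $i=2,\ldots,s$ of the $s\times t$-grid decouple under the sum, each producing a factor of $\Cf'_t(|F|^q)(y)$ upon summation of the corresponding $a_i$ over $\Gr$, while the first row together with the constraint $z\in B$ produces the single factor $\Cf'_t(B,|F|^q,\ldots,|F|^q)(y)$. Collecting these contributions gives exactly
\[
|B|^{q-1}\sum_{|y|=t-1}\Cf'_t(|F|^q)^{s-1}(y)\cdot\Cf'_t(B,|F|^q,\ldots,|F|^q)(y),
\]
where the power of $|B|$ is accounted for by the $k$-sample averaging. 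This row-by-row unpacking, forced by the multi-dimensional structure of both $\mD_l$ and $\bar x\oplus\bar y$, is the step I expect to be the main technical obstacle, but it is essentially dictated by the structure of the problem.

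Finally, the choice of $k$ makes $(Cq/k)^{q/2}\le(\eps/4)^q$, so by Markov at least half of the samples $(z_1,\ldots,z_k)\in B^k$ satisfy the sum-bound deterministically. Define $T\subseteq\Gr$ to consist of those shifts $t$ for which the shifted sample $(z_1+t,\ldots,z_k+t)$ still lies in $B^k$ and in the good subset for a positive fraction of $(z_i)$; a standard double-counting pigeonhole---identical to the one in \cite{CS}, \cite{sanders2012bogolyubov}---produces $|T|\ge|B|\exp\bigl(-O(\eps^{-2}q\log(1/\beta))\bigr)$. For $t\in T$ the triangle inequality applied to the two $k$-sample approximations, one for $F\circ\mD_l(B)$ via $(z_i)$ and one for $F\circ\mD_l(B+t)$ via $(z_i+t)$, delivers \eqref{f:CS_new} after absorbing constants into $\eps$.
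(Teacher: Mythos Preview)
Your approach is essentially the same as the paper's: random sampling from $B$, a Marcinkiewicz--Zygmund/Khintchine moment bound applied pointwise, the combinatorial rewriting of $\sum_{|x|=s-1}\sum_{|y|=t-1}(|F|^q\circ\mD_l(\mu_B))(\bar x\oplus\bar y)$ as the $\Cf'_t$-expression, and then the standard Croot--Sisask pigeonhole to extract $T$.

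One small correction: your choice $k=\lceil Cq\eps^{-2}\log(1/\beta)\rceil$ carries an unnecessary $\log(1/\beta)$. The moment step only needs $k\asymp q\eps^{-2}$ to force $(Cq/k)^{q/2}\le(\eps/4)^q$, and it is precisely this smaller $k$ that, fed into the pigeonhole (which yields $|T|\gtrsim|B|\beta^{k-1}$), gives the claimed $|T|\ge|B|\exp(-O(\eps^{-2}q\log(1/\beta)))$. With your $k$ the pigeonhole would produce an extra $\log(1/\beta)$ in the exponent. The paper takes $k=O(\eps^{-2}q)$ for exactly this reason; otherwise the arguments coincide.
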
 
\begin{proof} 
    We choose $k = O(\eps^{-2}q)$  random points $b_1,\dots, b_k \in B$ uniformly and independently and let $Z_j ((\bar{x}\oplus \bar{y}) = F((\bar{x}\oplus \bar{y}) + \mD_l (b_j)) - (F\circ \mD_{l} (\mu_B)) (\bar{x}\oplus \bar{y})$. 
    Clearly, the random variables $Z_j$ are independent, have zero expectation and their variances do not exceed $(|F|^2\circ \mD_{l} (\mu_B)) (\bar{x}\oplus \bar{y})$.
    By the Khintchine inequality for sums of independent random variables,
$$
    \| \sum_{j=1}^k Z_j (\bar{x}\oplus \bar{y}) \|_{L_p (\mu^k_B)} \ll (|F|^2\circ \mD_{l} (\mu_B)) (\bar{x}\oplus \bar{y})^{1/2} \,.
$$
Raising the last inequality to the power $q$, dividing by $k^q$, summing over  $\bar{x}\oplus \bar{y}$, and using the H\"older inequality, which gives $(|F|^2\circ \mD_{l} (\mu_B)) (\bar{x}\oplus \bar{y})^{q/2} \le (|F|^q\circ \mD_{l} (\mu_B)) (\bar{x}\oplus \bar{y})$, we get that
$$
    \sum_{|x|=s-1}\, \sum_{|y|=t-1} \int  \left| 
    \frac{1}{k} \sum_{j=1}^k F((\bar{x}\oplus \bar{y}) + \mD_l (b_j)) - (F\circ \mD_{l} (\mu_B)) (\bar{x}\oplus \bar{y})
     \right|^q  d\mu_B^k (x_1,\dots,x_k)
$$
$$
        \ll
        (qk^{-1})^{q/2} \sum_{|x|=s-1}\, \sum_{|y|=t-1} (|F|^q\circ \mD_{l} (\mu_B)) (\bar{x}\oplus \bar{y})
$$
$$
        =
        (qk^{-1})^{q/2} |B|^{-1} \sum_{|y|=t-1} \Cf'_t (|F|^q)^{s-1} (y) \cdot \Cf'_t (B,|F|^q, \dots, |F|^q) (y) \,.
$$
 After that we repeat the argument from 
 \cite{CS}, \cite{sanders2012bogolyubov}, \cite{sanders2013structure} and 
\cite[Theorem  15]{sh_str_survey}. 
    This completes the proof. 
$\hfill\Box$
\end{proof}

\subsection{On a family of  norms}
\label{subs:gen_norms}

In this section we define a very general family of norms, which includes the norms $\E^k_l$, $\mathcal{E}^k_{s,t}$ above, as well as the classical Gowers norms \cite{Gowers_m}. As the reader can see we do not use the Fourier approach in our proofs below.

Let $\Gr$ be an abelian group, $r, k_1,\dots, k_r \ge 2$ be integers and $f:\Gr \to \R$ be an arbitrary function.
Let $K=\prod_{j=1}^r k_j$, $B=[k_1] \times \dots \times [k_r]$ and write $x_1 = (x^{(1)}_1, \dots, x^{(k_1)}_1), \dots, x_r = (x^{(1)}_r, \dots, x^{(k_r)}_r)$. 
Also, for $\o \in B$ we write $\o = (\o_1,\dots,\o_r)$. 
Define 
\[
    \| f\|^K_{E_{k_1,\dots,k_r}} = \sum_{|x_1|=k_1}\, \dots \sum_{|x_r|=k_r}\, \mP_{K} (f) (x_1\oplus \dots \oplus x_r) 
\]
\begin{equation}\label{def:E_gen}
 = \sum_{|x_1|=k_1}\, \dots \sum_{|x_r|=k_r}\, \prod_{\o \in B} f(x^{(\o_1)}_1 + \dots + x^{(\o_r)}_r) 
 \,.
\end{equation}
The case $r=2$ corresponds to $\E^k_l$--norm, $r=3$ is just  $\mathcal{E}^k_{s,t}$--norms and for $k_1= \dots = k_r =2$, we obtain Gowers' $U^k$ norms (up to some normalizations).
%uniformity 
In a similar way one can define the multi--scalar product for the quantity $\| \cdot \|_{E_{k_1,\dots,k_r}}$  as was done in \cite{Gowers_m}, namely, having any functions $(f^\o)_{\o \in B}$, we write 
\begin{equation}\label{def:multi_prod}
    \langle f^\o \rangle_{E_{k_1,\dots,k_r}}  = 
\sum_{|x_1|=k_1}\, \dots \sum_{|x_r|=k_r}\, \prod_{\o \in B}            f^{\o} (x^{(\o_1)}_1 + \dots + x^{(\o_r)}_r) \,.
\end{equation}
It is easy to see that if $K$ is an even number, then 
\begin{equation}\label{f:gen_non-negativity}
    \| f\|^K_{E_{k_1,\dots,k_r}} \ge 0 \,.
\end{equation}
    Indeed, let, say, $k_r$ be an even number, then we can write  $x_r$ as $x_r = (x'_r, x''_r)$, where $|x'_r| = |x''_r| = k_r/2$ and whence
\[
    \| f\|^K_{E_{k_1,\dots,k_r}} = 
     \sum_{|x_1|=k_1}\, \dots \sum_{|x_{r-1}|=k_{r-1}}\, \left( \sum_{|x'_r|=k_r/2} \mP_{K/2} (f) (x_1\oplus \dots \oplus x_{r-1} \oplus x'_{r}) \right)^2 \ge 0 \,.
\]
    Also, let us remark the inductive property of the norm $E_{k_1,\dots,k_r}$. For concreteness, we take the $r$th coordinate and  obtain  from definition \eqref{def:E_gen} that 
\begin{equation}\label{f:norm_inductive}
    \| f\|^K_{E_{k_1,\dots,k_r}} = \sum_{|z|=k_r} \| f_z\|^{K/k_r}_{E_{k_1,\dots,k_{r-1}}} \,.
\end{equation}
    Let us make a simple remark concerning $E_{k_1,\dots,k_r}$--norm.  

\begin{lemma}
    Let $f:\Gr \to \R$ be a function. 
    Suppose that there is $j\in [r]$ such that $k_j$ is even and $K/k_j$ is also even. Then $\| f\|_{E_{k_1,\dots,k_r}} = 0$ iff $f\equiv 0$. 
\label{l:f=0}
\end{lemma}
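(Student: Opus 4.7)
\proof
The direction $f\equiv 0 \Rightarrow \|f\|_{E_{k_1,\dots,k_r}}=0$ is immediate from \eqref{def:E_gen}. For the converse, assume $\|f\|^K_{E_{k_1,\dots,k_r}}=0$, and without loss of generality take $j=r$, so that both $k_r$ and $K/k_r$ are even.

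The plan is a short two-step peeling-off argument. First, I would apply the inductive identity \eqref{f:norm_inductive} to write
\[
\|f\|^K_{E_{k_1,\dots,k_r}} = \sum_{|z|=k_r}\|f_z\|^{K/k_r}_{E_{k_1,\dots,k_{r-1}}},
\]
and invoke \eqref{f:gen_non-negativity} (with exponent $K/k_r$, which is even by hypothesis) to conclude that every summand on the right is non-negative. Since the total vanishes, each summand is $0$. Specializing to $z=\mD_{k_r}(0)$, the function $f_z$ collapses to $g:=f^{k_r}$, and I obtain
\[
\|g\|^{K/k_r}_{E_{k_1,\dots,k_{r-1}}}=0.
\]

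Second, here the other half of the hypothesis kicks in: because $k_r$ is even, $g=f^{k_r}$ is pointwise non-negative. Therefore in the expansion \eqref{def:E_gen} of $\|g\|^{K/k_r}_{E_{k_1,\dots,k_{r-1}}}$ every single product is a product of non-negative numbers. The sum being zero forces each individual summand to vanish. I would then pick out the summand with $x_1=\mD_{k_1}(t)$ and $x_i=\mD_{k_i}(0)$ for $2\le i\le r-1$: every argument $x_1^{(\omega'_1)}+\dots+x_{r-1}^{(\omega'_{r-1})}$ then equals $t$, and the summand becomes $g(t)^{K/k_r}$. Setting this equal to zero for every $t\in\Gr$ gives $g\equiv 0$, hence $f\equiv 0$.

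There is no real obstacle here; the proof is structurally forced by the two hypotheses, each playing a distinct role: $K/k_r$ even gives sign control at the outer level (via \eqref{f:gen_non-negativity}), while $k_r$ even gives pointwise non-negativity of $g=f^{k_r}$, which is exactly what is needed to pass from ``the whole sum is zero'' to ``one particular diagonal summand is zero'' at the inner level. The only mild case to verify separately is $r=2$, where the inner specialisation degenerates to $(\sum_t g(t))^{k_1}=0$, but this still forces $g\equiv 0$ since $g\ge 0$, so the argument goes through unchanged.
$\hfill\Box$
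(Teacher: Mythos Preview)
Your proof is correct and follows essentially the same two-step idea as the paper: use one evenness hypothesis to force every term in an outer sum of non-negative quantities to vanish, then use the other evenness hypothesis at the inner level after specialising to a diagonal. The only difference is that you swap the roles of the two hypotheses: the paper writes
\[
\|f\|^K_{E_{k_1,\dots,k_r}}=\sum_{|x_1|=k_1}\cdots\sum_{|x_{r-1}|=k_{r-1}}\Bigl(\sum_z \mP_{K/k_r}(f)(x_1\oplus\cdots\oplus x_{r-1}\oplus z)\Bigr)^{k_r},
\]
uses $k_r$ even for outer non-negativity, sets $x_1=\dots=x_{r-1}=0$, and then uses $K/k_r$ even to conclude $\sum_z f^{K/k_r}(z)=0\Rightarrow f\equiv 0$; you instead use \eqref{f:norm_inductive}, invoke $K/k_r$ even for outer non-negativity, specialise $z=\mD_{k_r}(0)$ to get $g=f^{k_r}\ge 0$, and then pick out a diagonal term. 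The paper's version is marginally shorter (no $r=2$ case distinction), but the arguments are interchangeable.
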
 
\begin{proof}
    Without loosing of the generality assume that $j=r$. 
    Write 
\[
    \| f\|^K_{E_{k_1,\dots,k_r}} = 
     \sum_{|x_1|=k_1}\, \dots \sum_{|x_{r-1}|=k_{r-1}}
    \, \left( \sum_{z} \mP_{K/k_r} (f) (x_1\oplus \dots \oplus x_{r-1} \oplus z) \right)^{k_r} = 0 \,.
\]
    Since $k_r$ is an even number, it follows that, in particular, 
    $\sum_z f^{K/k_r} (z) = 0$ (we have taken $x_1=\dots = x_{r-1} =0$ in the last formula) and hence $f\equiv 0$. 
    This completes the proof. 
$\hfill\Box$
\end{proof}

\bp 

Now 
%we can obtain
let us show that the multi--scalar product is controlled via $E_{k_1,\dots,k_r}$--norm.

\begin{lemma}
    Let $r\ge 2$ be a positive integer, $k_1,\dots, k_r \ge 2$ be even integers and $f^{\o} : \Gr \to \R$, $\o \in B$ be any functions.  
    Then 
\begin{equation}\label{f:l_gen_norm}
     |\langle f^\o \rangle_{E_{k_1,\dots,k_r}}|
     \le 
     \prod_{\o \in B} \| f^\o\|_{E_{k_1,\dots,k_r}} \,.
\end{equation}
\label{l:gen_norm}
\end{lemma}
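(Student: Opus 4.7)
The plan is to prove the inequality by iterated application of H\"older's inequality in each of the $r$ coordinates, in the spirit of the standard Gowers--Cauchy--Schwarz argument (cf.~\cite{Gowers_m}). The key observation is a factorization: for any coordinate $j$, the sum $\sum_{x_j^{(1)},\dots,x_j^{(k_j)}}$ splits as a product, because the variable $x_j^{(a)}$ appears only in factors $f^\o$ with $\o_j=a$.

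More precisely, fix $j$ and write $\tilde{x}_j$ for the collection of all $x_i$ with $i\neq j$. Setting
\[
    S^{(j)}_a(\tilde{x}_j) = \sum_{z\in \Gr}\, \prod_{\o \in B,\, \o_j = a} f^\o \(z + \sum_{i\neq j} x_i^{(\o_i)}\),
\]
one has $\langle f^\o \rangle_{E_{k_1,\dots,k_r}} = \sum_{\tilde{x}_j} \prod_{a=1}^{k_j} S^{(j)}_a(\tilde{x}_j)$. Applying H\"older's inequality with exponents $k_j$ in the outer sum gives
\[
    |\langle f^\o \rangle_{E_{k_1,\dots,k_r}}| \le \prod_{a=1}^{k_j} \left( \sum_{\tilde{x}_j} |S^{(j)}_a(\tilde{x}_j)|^{k_j} \right)^{1/k_j}.
\]
Since $k_j$ is even, $|S^{(j)}_a|^{k_j} = (S^{(j)}_a)^{k_j}$, and expanding the $k_j$-th power introduces $k_j$ new summation variables $y_1,\dots,y_{k_j}$ playing the role of a new coordinate $j$. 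The resulting expression has exactly the shape of a multi-scalar product $\langle g^{\o'} \rangle_{E_{k_1,\dots,k_r}}$ in $r$ coordinates of the \emph{same} sizes $k_1,\dots,k_r$, where the new functions are $g^{(\o_1,\dots,\o_{j-1},b,\o_{j+1},\dots,\o_r)} = f^{(\o_1,\dots,\o_{j-1},a,\o_{j+1},\dots,\o_r)}$, independent of $b\in [k_j]$.

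Now iterate: apply the above step in coordinate $j=1$, then to each of the $k_1$ resulting multi-scalar products apply it in coordinate $j=2$, and so on through $j=r$. After step $j$, every factor is a multi-scalar product of the same form $E_{k_1,\dots,k_r}$ in which the function labels have been ``frozen'' to some $(a_1,\dots,a_j)$ in the first $j$ coordinates; and the accumulated exponent on each such factor is $\prod_{i=1}^{j} k_i^{-1}$. After all $r$ steps, every factor equals $\|f^{(a_1,\dots,a_r)}\|_{E_{k_1,\dots,k_r}}^{K}$ for some $(a_1,\dots,a_r)\in B$, taken with exponent $\prod_{i=1}^r k_i^{-1} = 1/K$, and the product ranges over all $(a_1,\dots,a_r)\in B$. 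Collecting the exponents yields
\[
    |\langle f^\o \rangle_{E_{k_1,\dots,k_r}}| \le \prod_{(a_1,\dots,a_r)\in B} \|f^{(a_1,\dots,a_r)}\|_{E_{k_1,\dots,k_r}}^{K\cdot 1/K} = \prod_{\o\in B} \|f^\o\|_{E_{k_1,\dots,k_r}},
\]
which is the desired bound.

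The main obstacle is purely notational: keeping careful track of how the labels $\o'$ of the intermediate multi-scalar products relate to the original labels $\o\in B$, and verifying that the coordinate structure $(k_1,\dots,k_r)$ is genuinely preserved at each step so that the next H\"older application is legitimate. The evenness of each $k_j$ is used to pass from $|S^{(j)}_a|^{k_j}$ to $(S^{(j)}_a)^{k_j}$, which in turn allows the expansion into the next multi-scalar product; and the non-negativity property \eqref{f:gen_non-negativity}, which holds because $K$ is even, ensures that the final quantities $\|f^\o\|_{E_{k_1,\dots,k_r}}^{K}$ are genuine non-negative real numbers whose $K$-th roots make sense.
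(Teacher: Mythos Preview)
Your proof is correct and follows essentially the same approach as the paper's: both arguments factor the multi-scalar product in a fixed coordinate, apply H\"older with equal exponents $k_j$ (using evenness to drop the absolute value), recognise each resulting factor as a multi-scalar product of the same type with functions now constant along that coordinate, and iterate through all $r$ coordinates to reduce to the diagonal terms $\|f^\o\|_{E_{k_1,\dots,k_r}}^K$. Your bookkeeping of the frozen labels and accumulated exponents is in fact more explicit than the paper's sketch.
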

\begin{proof} 
    We write 
\[
    \langle f^\o \rangle_{E_{k_1,\dots,k_r}}
    =
    \sum_{|x_1|=k_1}\, \dots \sum_{|x_{r-1}|=k_{r-1}}\, \left( \sum_{x^{(1)}_r}\, \prod_{\o \in B,\, \o_r=1} f^\o (x^{(\o_1)}_1 + \dots + x^{(\o_{r-1})}_{r-1} + x^{(1)}_r )  \right) 
\]
\[
    \dots 
    \left( \sum_{x^{(k_1)}_r}\, \prod_{\o \in B,\, \o_r=k_r} f^\o (x^{(\o_1)}_1 + \dots + x^{(\o_{r-1})}_{r-1} + x^{(k_r)}_r )\right) \,.
\]
%as required.
    After that 
    apply the H\"older inequality (here we have used the fact that $k_r$ is an even number)
    and we arrive to the new $k_r$ families of functions.
    Take any of them, say, $(\tilde{f}^\o)$, $\o \in B$ and notice that 
\[
    \tilde{f}^\o = \tilde{f}^{\o'} = f^\o  
\]
    for all $\o = (\o_1,\dots,\o_r)$, $\o' = (\o'_1,\dots,\o'_r)$ with $(\o_2,\dots,\o_r) = (\o'_2,\dots,\o'_r)$.
    In particular, the family $(\tilde{f}^\o)$, $\o \in B$ has $K/k_r$ different functions. 
    Now we use the same argument for all remaining variables $x_1,\dots,x_{r-1}$ subsequently changing the families  $(f^\o)$, $\o \in B$. One can easily see that after all these $r$ steps we arrive to $K$ families consisting of single functions $f^\o$, $\o \in B$ 
    (just 
    %in view of 
    thanks to 
    the fact that any two points of our box $B$ can be reached by a path in the directions of coordinate  axes). 
    This is equivalent to inequality \eqref{f:l_gen_norm}
    and we 
    %This completes 
    complete 
    the proof. 
$\hfill\Box$
\end{proof}

\bp 

%Now let us 
Finally, we are ready to 
obtain the main result of this section.
%We 
Let us 
write $(k_1,\dots,k_r) \le (m_1,\dots, m_t)$ if the first vector is lexigraphically smaller than the second one (i.e., $r\le t$ and $k_j\le m_j$, $j\in [r]$). 
Also, put
\begin{equation}\label{def:bar-norm}
    \| f\|^K_{\bar{E}_{k_1,\dots,k_r}} = N^{-(k_1 + \dots+k_r)} \| f\|^K_{E_{k_1,\dots,k_r}} \,.
\end{equation} 
    Thus for any $f : \Gr \to [-1,1]$ one has $\| f\|_{\bar{E}_{k_1,\dots,k_r}} \le 1$. 

\begin{theorem}
    Let  $r\ge 2$ be a positive integer, $k_1,\dots, k_r \ge 2$ be even integers and $f : \Gr \to \R$ be a function.  
    Then formula \eqref{def:E_gen} defines a norm of $f$.
    %\\
    Further if $(k_1,\dots,k_r) \le (m_1,\dots, m_t)$, then 
\begin{equation}\label{f:t_gen_norm}
    \| f\|_{\bar{E}_{k_1,\dots,k_r}}
    \le
    \| f\|_{\bar{E}_{m_1,\dots,m_t}} \,.
\end{equation} 
\label{t:gen_norm}
\end{theorem}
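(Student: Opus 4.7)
The plan is to verify the four norm axioms for Part 1 and to reduce Part 2 to a single factorisation identity that makes the monotonicity an instance of the classical $L^p$--monotonicity on a probability space.

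Phase 1 (norm axioms). Non--negativity $\|f\|^K_{E_{k_1,\dots,k_r}}\ge 0$ is already established in the text right before \eqref{f:gen_non-negativity}: since $K$ is even, writing $x_r=(x'_r,x''_r)$ with $|x'_r|=|x''_r|=k_r/2$ exhibits the quantity as a sum of squares. Definiteness follows from Lemma \ref{l:f=0}, whose hypothesis is automatic: for $r\ge 2$ with every $k_i$ even, both $k_j$ and $K/k_j=\prod_{i\ne j}k_i$ are even for any $j$. Homogeneity is immediate from $\mP_K(\lambda f)=\lambda^K \mP_K(f)$ together with $K$ even. The substantive axiom is the triangle inequality: by multilinearity,
$$
\|f+g\|^K_{E_{k_1,\dots,k_r}} \;=\; \sum_{S\subseteq B}\langle h^{S,\o}\rangle_{E_{k_1,\dots,k_r}}\,, \qquad h^{S,\o}=\begin{cases}f, & \o\in S,\\ g, & \o\notin S.\end{cases}
$$
Applying Lemma \ref{l:gen_norm} to each multi--scalar product gives $|\langle h^{S,\o}\rangle|\le \|f\|_E^{|S|}\|g\|_E^{K-|S|}$; summing via the binomial theorem yields $\|f+g\|_E^K\le (\|f\|_E+\|g\|_E)^K$, and taking $K$-th (positive) roots produces the triangle inequality.

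Phase 2 (monotonicity). The key factorisation comes from regrouping $\prod_{\o\in B}f(z_\o)$ according to the value of $\o_r$. When one sums over $x_r\in\Gr^{k_r}$, the $k_r$ sub--products indexed by $i\in[k_r]$ are independent in $x_r^{(i)}$ and identical in form, so
$$
\sum_{x_r\in\Gr^{k_r}}\prod_{\o\in B} f(z_\o) \;=\; H(x_1,\dots,x_{r-1})^{k_r}\,,
$$
where $H(x_1,\dots,x_{r-1})=\sum_{a\in\Gr}\prod_{\o'\in [k_1]\times\dots\times[k_{r-1}]} f(x_1^{(\o'_1)}+\dots+x_{r-1}^{(\o'_{r-1})}+a)$ is \emph{independent of $k_r$}. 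Dividing by $N^{k_1+\dots+k_r}$ and setting $\bar H:=H/N$ yields
$$
\|f\|^K_{\bar E_{k_1,\dots,k_r}} \;=\; \mathbb{E}_{x_1,\dots,x_{r-1}}\bar H(x_1,\dots,x_{r-1})^{k_r}\,,
$$
so $\|f\|_{\bar E_{k_1,\dots,k_r}}$ is, up to the $(1/\tilde K)$-th power with $\tilde K=\prod_{i<r}k_i$, the $L^{k_r}(\mu)$-norm of $\bar H$ on the uniform probability space $\mu$ on $\Gr^{k_1+\dots+k_{r-1}}$. Since $\bar H$ is unchanged under variation of $k_r$, monotonicity in $k_r$ follows from the classical $L^p$--monotonicity $\|\bar H\|_{L^p(\mu)}\le \|\bar H\|_{L^q(\mu)}$ for $p\le q$ (valid when both are even, by Jensen applied to the concave map $t\mapsto t^{p/q}$, since then $\bar H^p=|\bar H|^p$). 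By the symmetry of $E_{k_1,\dots,k_r}$ under permutation of coordinates, monotonicity holds in every single coordinate $k_j$, not just the last.

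For extending from $r$ to $t>r$ coordinates, the same manipulation applied to $E_{k_1,\dots,k_r,m}$ gives $\|f\|^{mK}_{\bar E_{k_1,\dots,k_r,m}}=\mathbb{E}_{x_1,\dots,x_r}\tilde H'(x)^m$ with $\tilde H'(x)=\mathbb{E}_{a\in\Gr}\prod_{\o\in B}f(z_\o+a)$; translation invariance in the $x_1$--coordinates gives $\mathbb{E}_x\tilde H'(x)=\|f\|^K_{\bar E_{k_1,\dots,k_r}}\ge 0$, and Jensen (for the convex map $t\mapsto t^m$ with $m$ even) yields $\mathbb{E}_x\tilde H'(x)^m\ge (\mathbb{E}_x\tilde H'(x))^m$, i.e.\ $\|f\|_{\bar E_{k_1,\dots,k_r,m}}\ge \|f\|_{\bar E_{k_1,\dots,k_r}}$. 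Chaining the two moves---enlarging each $k_j$ to $m_j$ one coordinate at a time, then appending trailing coordinates $m_{r+1},\dots,m_t$ one at a time---yields the full claim. The only delicate point is the factorisation identity together with the choice of normalisation $\bar H=H/N$, which is precisely what makes the $L^p$--monotonicity applicable on a genuine probability space; once this is in place, the monotonicity is classical.
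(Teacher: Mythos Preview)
Your proof is correct. Part 1 coincides with the paper's argument: non--negativity from \eqref{f:gen_non-negativity}, definiteness from Lemma \ref{l:f=0}, and the triangle inequality via multilinear expansion together with Lemma \ref{l:gen_norm} and the binomial theorem.

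For Part 2 you take a genuinely different route. The paper proves \eqref{f:t_gen_norm} in one shot by embedding $B$ into the larger box $B'=[m_1]\times\dots\times[m_t]$, setting $g^{\o}=f$ for $\o\in B$ and $g^{\o}\equiv 1$ for $\o\in B'\setminus B$, observing the identity $\|f\|^K_{E_{k_1,\dots,k_r}}=N^{S-S'}\langle g^\o\rangle_{E_{m_1,\dots,m_t}}$, and then applying Lemma \ref{l:gen_norm} directly (using $\|1\|_{E_{m_1,\dots,m_t}}=N^{S'/M}$). Your approach instead factorises $\|f\|^K_{\bar E_{k_1,\dots,k_r}}=\mathbb{E}\,\bar H^{k_r}$ and reduces single--coordinate monotonicity to the classical $L^p$--monotonicity on a probability space, then handles the extension $r\to t$ via Jensen; you chain these moves coordinate by coordinate. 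Your argument is more elementary in that it never invokes Lemma \ref{l:gen_norm} for the monotonicity part --- only Jensen --- and it exposes the $L^p$--structure transparently; the paper's argument is shorter and conceptually unified, showing that \eqref{f:t_gen_norm} is in fact an immediate corollary of the generalized Cauchy--Schwarz--Gowers inequality already established.
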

\begin{proof} 
    Take two functions $f,g : \Gr \to \R$. 
    In view of Lemma \ref{l:gen_norm}, we have 
\[
    \| f+g\|^K_{E_{k_1,\dots,k_r}} = 
    \langle f+g \rangle_{E_{k_1,\dots,k_r}}
    \le  \sum_{j=1}^k \binom{K}{j} \| f\|^j_{E_{k_1,\dots,k_r}} \|g \|^{K-j}_{E_{k_1,\dots,k_r}} 
    = 
    ( \| f\|_{E_{k_1,\dots,k_r}} + \| g\|_{E_{k_1,\dots,k_r}} )^K
\]  
    and we have obtained the triangle inequality for $E_{k_1,\dots,k_r}$. 
    By estimate \eqref{f:gen_non-negativity} we know that our quantity $\| f\|_{\bar{E}_{k_1,\dots,k_r}}$ is non--negative. Also, Lemma \ref{l:f=0} guaranties that $\| f\|_{E_{k_1,\dots,k_r}} = 0$ iff $f\equiv 0$. Thus indeed formula \eqref{def:E_gen} defines a norm of $f$.

    It remains to obtain \eqref{f:t_gen_norm}. 
    Let $M=\prod_{j=1}^t m_j$, $B'= [m_1] \times \dots \times [m_t]$, $S = \sum_{j=1}^r k_j$ and  $S' = \sum_{j=1}^t m_j$. 
    Consider the family of functions $(g^\o)_{\o \in B'}$ such that for $\o\in B$ one has $g^\o = f^\o$ and let $g^\o \equiv  1$ otherwise. 
    It is easy to see that 
\[
    \| f\|^K_{\bar{E}_{k_1,\dots,k_r}} N^{S} = \| f\|^K_{E_{k_1,\dots,k_r}} = N^{S-S'} \cdot \langle g^\o \rangle_{E_{m_1,\dots,m_t}}
\]
    Using the last formula, Definition \eqref{def:bar-norm},  as well as Lemma \ref{l:gen_norm}, we obtain 
\[
    \| f\|^K_{\bar{E}_{k_1,\dots,k_r}} N^{S} \le 
    N^{S-S'} \| f\|^K_{E_{m_1,\dots,m_t}} (N^{S'/M})^{M-K}
    = 
    N^{S- S'K/M} \cdot N^{S'K/M} \| f\|^K_{\bar{E}_{m_1,\dots,m_t}}
\]
\[
    = N^S  \| f\|^K_{\bar{E}_{m_1,\dots,m_t}}
\]
as required.
%    This completes the proof. 
$\hfill\Box$
\end{proof}

\bibliographystyle{abbrv}

\bibliography{bibliography}{}

%\addbibresource{bibliography.bib}
%\printbibliography

\end{document}